\newcommand{\con}{\hbox{Cont}}
\newcommand{\ocs}{\hbox{OccSet}}
\newcommand{\non}{\hbox{Non}}
\newcommand{\lin}{\hbox{Lin}}
\newtheorem{theorem}{Theorem}[section]
\newtheorem{ex}[theorem]{Example}
\newtheorem{cor}[theorem]{Corollary}
\newtheorem{fact}[theorem]{Fact}
\newtheorem{lemma}[theorem]{Lemma}
\newtheorem{definition}[theorem]{Definition}
\newtheorem{question}{Question}
\newtheorem{claim}{Claim}
\begin{document}

\title{Non-finitely based monoids}
\author{ Olga Sapir }

\maketitle

\begin{abstract} We present a general method for proving that a semigroup is non-finitely based. The method is strong enough to cover the non-finite
basis arguments in articles \cite{BS, Is, MJ1,MJ,JS, EL2, EL5, LZL,P,OS, Sh, Tr2, WTZ1, WTZ}. In particular,
the method allows to generalize the results in \cite{ BS, EL2, WTZ1, WTZ} and to simplify their proofs.  The method also allows to remove one of the requirements on the ``special system of identities" used by P. Perkins in \cite{P} to
 find the first two examples of finite non-finitely based semigroups. We use our method to prove eleven new sufficient conditions under which a monoid is non-finitely based. As an application, we find infinitely many new examples of finite finitely based aperiodic monoids whose direct product is non-finitely based.
\end{abstract}

{\bf Keywords:} Finite Basis Problem, Semigroups, Monoids, Piecewise testable languages

\section{Introduction}

An algebra  is said to be {\em finitely based} (FB) if there is a finite subset of its identities from which all of its identities may be deduced.
Otherwise, an algebra is said to be {\em non-finitely based} (NFB).
The famous Tarski's Finite Basis Problem asks if there is an algorithm to decide when a finite algebra is finitely based.
In 1996, R. McKenzie  \cite{RM} solved this problem in the negative showing that the classes of FB and inherently not finitely based finite algebras are recursively inseparable. (A locally finite algebra is said to be {\em inherently not finitely based} (INFB) if any locally finite variety containing it is NFB.)
It is still unknown whether the set of FB finite semigroups is recursive although a very large volume of work is devoted to this problem (see
the surveys \cite{SV,MV}). In contrast with McKenzie's result, a powerful description of the INFB finite semigroups has been obtained by M. Sapir \cite{MS, MS1}.

In 1968, P. Perkins \cite{P} found the first sufficient condition under which a monoid (semigroup with an identity element) is NFB.
By using this condition, he constructed the first two examples of finite NFB semigroups.  The first example was the 6-element Brandt monoid and the second example was the 25-element monoid obtained from the set of words
$W= \{abtba, atbab, abab, aat\}$ by using the following construction attributed to Dilworth.

 Let ${\mathfrak A}$ be an alphabet and $W$ be a set of words in the free monoid ${\mathfrak A}^*$. Let $S(W)$ denote the Rees quotient  over the ideal of  ${\mathfrak A}^*$ consisting of all words that are not subwords of words in $W$. For each set of words $W$, the semigroup $S(W)$ is a monoid with zero whose nonzero elements are the subwords of words in $W$. Evidently, $S(W)$ is finite if and only if $W$ is finite.

A word ${\bf u}$ is said to be an isoterm \cite{P} for a semigroup $S$ if $S$ does not satisfy any nontrivial identity of the form ${\bf u} \approx {\bf v}$.
According to M. Sapir \cite{MS}, a finite semigroup $S$ is INFB  if and only if every Zimin word (${\bf Z}_1=x_1, \dots, {\bf Z}_{k+1} = {\bf Z}_kx_{k+1}{\bf Z}_k, \dots$)  is an isoterm for $S$. This result implies that a monoid of the form $S(W)$ is never INFB while the Brandt monoid is INFB.

This article is the first part of a sequence of four submissions.  Two earlier long submissions were re-organised into the present four separate articles to aid focus to the presentation. The next article \cite{OS3} contains a method for proving that a semigroup is finitely based.
In articles \cite{OS1, OS2} we study the following problem.

 \begin{question}\cite[M. Sapir]{SV} \label{qMS} Is the set of finite monoids of the form $S(W)$ recursive?
\end{question}

If a variable $t$ occurs exactly once in a word ${\bf u}$ then we say that $t$ is {\em linear} in ${\bf u}$. If a variable $x$ occurs more than once in a word ${\bf u}$ then we say that $x$ is {\em non-linear} in ${\bf u}$. In article \cite{OS1}, we show how to recognize FB semigroups among the monoids of the form $S(W)$ where $W$ consists of a single word with at most two non-linear variables. In article \cite{OS2}, we show how to recognize FB semigroups among the monoids of the form $S(W)$
with some other natural restrictions on the set $W$.

We say that a semigroup $S$ is {\em non-finitely based by way of a set of identities $\Sigma$} if $S$ satisfies all the identities in $\Sigma$ but $\Sigma$ cannot be derived from any finite set of identities of $S$. Recall that {\em axiomatic rank of an algebra} is the minimal number of variables $m$ such that all of its identities may be deduced from its identities in $m$ variables. It is well-known that every locally finite variety of algebras of finite axiomatic rank is finitely based. This implies that if a locally finite semigroup is non-finitely based by way of a set of identities $\Sigma$ then there is no bound on the number of variables involved in the identities from $\Sigma$.

Let $\sim_S$ denote the fully invariant congruence on the free semigroup $\mathfrak A^+$ corresponding to a semigroup $S$.
 The sufficient condition found by P. Perkins in \cite{P} (see Theorem \ref{PSC} below) exhibits a certain set of words $W$ with two non-linear variables, a certain set of identities $\Sigma$ and states the following:

 ($\mathcal P$) If a monoid $S$ satisfies all identities in $\Sigma$ and the words in $W$ are $\sim_S$-related to other words in $\mathfrak A^+$ in a certain way, then the monoid $S$ is non-finitely based by way of $\Sigma$.

Studying Question \ref{qMS} in articles \cite{MJ1,MJ,JS,LZL,OS} has resulted in discovering many new sufficient conditions under which a monoid is non-finitely based. Unlike the ``Zimin words" condition of M. Sapir \cite{MS}, all  conditions in these articles are of the same form ($\mathcal P$) as the original Perkins condition.

In this article we present a general method (see Lemma \ref{nfblemma1} below) for proving that a semigroup is non-finitely based.
We use this method to find eleven new sufficient conditions of form ($\mathcal P$) under which a monoid is non-finitely based (see Theorems \ref{SL1}, \ref{nfbpairs}, \ref{BSnew} and \ref{BSnew1} below). We also use our method to reduce the number of requirements in Perkins' \cite{P} and Lee's \cite{EL2} sufficient conditions under which a semigroup is non-finitely based (see Theorems \ref{PSC} and \ref{EL} below).

 We observe that Lemma \ref{nfblemma1} is strong enough to cover the non-finite
basis arguments in articles \cite{BS, Is, MJ1,MJ,JS, EL2, EL5, LZL,P,OS, Sh, Tr2, WTZ1, WTZ}. In particular, Lemma \ref{nfblemma1} can be used to verify all
existing sufficient conditions of form $\mathcal P$ under which a semigroup is non-finitely based.
We also observe that in every existing sufficient condition of form ($\mathcal P$), all words involved in set $W$ contain at most two non-linear variables.

Recently,  E. Lee suggested to investigate the finite basis properties of the six-element semigroup $L= \langle a,b \mid aa=a, bb=b, aba=0 \rangle$ and of the monoid $L^1$ obtained by adjoining an identity element to $L$.  W. Zhang and Y. Luo proved in \cite{WTZ1} that the semigroup $L$ is non-finitely based and  W. Zhang proved in \cite{WTZ} that the monoid $L^1$ is also non-finitely based. A semigroup is called {\em aperiodic} if it contains only trivial subgroups. The semigroups  $L$ and $L^1$ are the first examples of finite aperiodic NFB semigroups for which the word $xtx$ is not an isoterm. In particular, neither of these semigroups is INFB. In this article, we use Lemma \ref{nfblemma1} to obtain short proofs that the semigroup $L$ and the monoid $L^1$ are non-finitely based. (See Corollaries \ref{WT} and \ref{WT1} below.)

The semigroup $L$ is also interesting because it is the last example of an NFB semigroup of order six. Only four out of 15 973 distinct six-element semigroups are non-finitely based \cite{LLZ} while every semigroup with five or fewer elements is finitely based \cite{Tr1, Tr}, see also  \cite{EL}.
In \cite{MJ}, M. Jackson proved that the varieties generated by $S(\{at_1abt_2b\})$ and  $S(\{abt_1at_2b, at_1bt_2ab\})$ are {\em  limit varieties}
in a sense that each of these varieties is NFB while each proper monoid subvariety of each of these varieties is FB. Since the word $xtx$ is not an isoterm for $L^1$,
the variety generated by $L^1$ contains neither $S(\{at_1abt_2b\})$ nor  $S(\{abt_1at_2b, at_1bt_2ab\})$. Consequently,
the result of Zhang implies that there exists a new limit variety of aperiodic monoids. This gives the affirmative answer to a question of Jackson from \cite{MJ}.

Theorem  \ref{nfbpairs} consists of eight sufficient conditions of form ($\mathcal P$) under which a monoid is non-finitely based. Each of these conditions is encoded in a separate row of Table \ref{classes}.
Throughout this article, elements of a countable alphabet $\mathfrak A$ are called {\em variables} and elements of the free semigroup $\mathfrak A^+$ are called {\em words}. If $\{x_1, x_2, \dots, x_n \}$ is a set of variables then we denote $[Xn]= x_1x_2 \dots x_n$ and $[nX] = x_nx_{n-1}\dots x_1$. The part of Theorem \ref{nfbpairs} that is encoded in the first row of Table \ref{classes} can be decoded into the following statement:
if $S$ is a monoid that satisfies the identity $xx[Yn][nY] \approx [Yn][nY]xx$ for each $n>1$ and the word $xyyx$ is an isoterm for $S$ then $S$ is non-finitely based.
This sufficient condition for the non-finite basis property of monoids can be deduced easily from the proofs of some interesting results obtained in 1970's independently by J. Isbell and L. Shneerson.

In 1970, J. Isbell \cite{Is} proved that the variety of groups  defined by the identity $xxyy \approx yyxx$ generates the variety of monoids $\mathcal M$ which is non-finitely based by way of the set of identities $\{xx[Yn][nY] \approx [Yn][nY]xx \mid n>1\}$. His proof was based on the fact that the identity $xxyy \approx yyxx$ is the only nontrivial identity whose left-hand side has length at most 4 and which holds in the group of integers $\mathbb Z$ and in the symmetric group $S_3$ simultaneously. This property of the monoid $\mathbb Z \times S_3$ is equivalent to the property that the word $xyyx$ is an isoterm for $\mathbb Z \times S_3$ and consequently, for $\mathcal M$.

In 1972,  L. Shneerson (see \cite{Sh} for exact references) described all semigroups and monoids with one defining relation satisfying nontrivial identities and noticed that the monoid $M= \langle a,b \mid abba =1 \rangle$ represents a unique (up to isomorphism) example of a one-relator monoid which is a non-cyclic group without free submonoids of rank 2. A couple of years later, he proved that the monoid $M$ is non-finitely based by way of the set of identities $\{xx[Yn][nY] \approx [Yn][nY]xx \mid n>1\}$. An examination of Shneerson's proof also shows that the only property of the monoid $M$ which is responsible for $M$ being non-finitely based by way of the set of identities $\{xx[Yn][nY] \approx [Yn][nY]xx \mid n>1\}$, is that the word  $xyyx$ is an isoterm for $M$.

The mentioned results of Shneerson together with his other results about monoids with one defining relation were adducted in \cite{Sh}.
It is proved there that if a semigroup or monoid with one defining relation has finite axiomatic rank then it is finitely based.
Article \cite{Sh} contains another interesting property of the NFB monoid $M$: it is finitely based as a group.  (Coincidentally, the NFB semigroup L is finitely based as an involution semigroup \cite{EL4}.) Also, the article \cite{Sh} contains the first example of an NFB semigroup $S$ such that the monoid $S^1$ obtained by adjoining an identity element to $S$ is finitely based. Recently, E. Lee \cite{EL3} found a way to construct finite semigroups with this property.

\section {A method for proving that a semigroup is non-finitely based}

If $S$ is a semigroup and $\bf u$ is a word then we use $[\![{\bf u}]\!]_S$ to denote the equivalence class of the fully invariant congruence $\sim_S$ on $\mathfrak A^+$ containing $\bf u$. The following obvious statement gives us a method for proving that a semigroup is non-finitely based.

\begin{fact} \label{nfblemma}
Let $S$ be a semigroup and $\{{\bf U}_n = {\bf V}_n \mid n>10 \}$ be a set of identities of $S$ in unbounded number of variables.

Assume that for each $n >10$, the word ${\bf U}_n$ contains at least $n$ variables and has some property $P_n$
such that the word ${\bf V}_n$ does not have the property $P_n$.

Let $\bf U$ be an arbitrary  word in $[\![{\bf U}_n ]\!]$ which has the property $P_n$.
Suppose that an application of a ``short" (say, in less than $n/3$ variables) identity of $S$  to the word $\bf U$ preserves the property $P_n$.
Then $S$ is non-finitely based by way of $\{{\bf U}_n = {\bf V}_n \mid n> 10\}$.
\end{fact}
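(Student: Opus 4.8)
The plan is to argue by contradiction, relying on the completeness of equational logic (Birkhoff's theorem): an identity is a consequence of a set $\Sigma_0$ of identities precisely when one side can be carried to the other by a finite chain of elementary rewriting steps, each step replacing a subword that is a substitution instance of one side of some identity in $\Sigma_0$ by the corresponding instance of the other side. Since $S$ already satisfies every identity in $\Sigma = \{{\bf U}_n \approx {\bf V}_n \mid n>10\}$, it remains only to prove that $\Sigma$ cannot be derived from any finite set of identities of $S$.

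So I would suppose, for contradiction, that $\Sigma$ follows from a finite set $\Sigma_0$ of identities of $S$. Because $\Sigma_0$ is finite, each of its identities involves only finitely many variables; let $k$ be a fixed integer exceeding the number of distinct variables occurring in any single identity of $\Sigma_0$. Now fix any $n$ with $n>10$ and $n>3k$, so that every identity in $\Sigma_0$ uses fewer than $n/3$ variables and is therefore ``short'' in the sense of the statement. In particular ${\bf U}_n \approx {\bf V}_n$ is a consequence of $\Sigma_0$, so by completeness there is a chain of words
\[
{\bf U}_n = {\bf W}_0,\ {\bf W}_1,\ \dots,\ {\bf W}_r = {\bf V}_n,
\]
in which each ${\bf W}_{i+1}$ is obtained from ${\bf W}_i$ by a single application of a short identity from $\Sigma_0$.

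The key observation is that every word in this chain lies in $[\![{\bf U}_n]\!]_S$: each elementary step applies an identity of $S$, and $\sim_S$ is a fully invariant congruence, so all the ${\bf W}_i$ are $\sim_S$-related to ${\bf U}_n$. I would then induct along the chain. The base word ${\bf W}_0 = {\bf U}_n$ has property $P_n$ by hypothesis. For the inductive step, assume ${\bf W}_i$ has $P_n$; since ${\bf W}_i \in [\![{\bf U}_n]\!]_S$ has $P_n$ and ${\bf W}_{i+1}$ arises from it by applying a short identity of $S$, the preservation hypothesis yields that ${\bf W}_{i+1}$ also has $P_n$. Hence ${\bf W}_r = {\bf V}_n$ has property $P_n$, contradicting the assumption that ${\bf V}_n$ does not. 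This contradiction shows that $\Sigma$ is not derivable from any finite set of identities of $S$, so $S$ is non-finitely based by way of $\Sigma$.

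As the statement is labelled ``obvious,'' I do not expect a genuine obstacle; the only points needing care are bookkeeping ones. First, one must correctly convert ``$\Sigma$ follows from a finite set of identities'' into a uniform bound on the number of variables in the premises, which is exactly what licenses choosing $n$ large enough that all premises become short. Second, one must check that the intermediate words ${\bf W}_i$ never leave the class $[\![{\bf U}_n]\!]_S$, since the preservation hypothesis can only be invoked for words in that class; this is immediate because every rewriting step applies a genuine identity of $S$. One should also read the preservation hypothesis as covering an application of a short identity in either direction, so that it applies to each step of the chain regardless of orientation.
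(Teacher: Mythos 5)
Your proof is correct and takes essentially the same approach as the paper: the paper states this fact without proof (calling it obvious), but its proof of the specialization Lemma~\ref{nfblemma1} is precisely your argument --- induct along a derivation chain ${\bf U}_n = {\bf W}_1 \approx \dots \approx {\bf W}_l = {\bf V}_n$, note every ${\bf W}_i$ stays in $[\![{\bf U}_n]\!]_S$, propagate the property step by step, and contradict its failure at ${\bf V}_n$. The only cosmetic difference is the formalization of ``application'': the paper uses whole-word substitution derivations (via the set $\Sigma'$ in at most $m+2$ variables from \cite{OSPV}, whence the margin $4m+8$ in Lemma~\ref{nfblemma1}), whereas you use subword replacement; these are equivalent up to two extra context variables, which your freedom in choosing $n$ large absorbs.
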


We use the word {\em substitution} to refer to the homomorphisms of the free semigroup and of the free monoid. Since every substitution $\Theta$ is uniquely determined by its values on the letters of the alphabet $\mathfrak A$, we write $\Theta: \mathfrak A \rightarrow \mathfrak A^+$ if $\Theta$ is a homomorphism of the free semigroup $\mathfrak A^+$ and we write $\Theta: \mathfrak A \rightarrow \mathfrak A^*$ if $\Theta$ is a homomorphism of the free monoid $\mathfrak A^*$.

\begin{cor} \label{nfbcor} Let $S$ be a semigroup. Suppose that for each $n$ large enough one can find a word ${\bf U}_n$ in at least $n$ variables such that ${\bf U}_n$ is not an isoterm for $S$ but every word $\bf u$ in less than $n/2$ variables is an isoterm for $S$ whenever $\Theta({\bf u})={\bf U}_n$ for some
substitution $\Theta: \mathfrak A \rightarrow \mathfrak A ^+$. Then $S$ is non-finitely based.
\end{cor}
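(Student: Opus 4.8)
The plan is to invoke Fact \ref{nfblemma} with the identities supplied by the non-isoterm hypothesis, using the most rigid possible property: let $P_n({\bf W})$ hold exactly when ${\bf W} = {\bf U}_n$. Since ${\bf U}_n$ is not an isoterm, for each large $n$ I fix a word ${\bf V}_n \neq {\bf U}_n$ with $S$ satisfying ${\bf U}_n \approx {\bf V}_n$. Then ${\bf U}_n$ has at least $n$ variables and enjoys $P_n$, while ${\bf V}_n$ fails $P_n$; so the only remaining hypothesis of Fact \ref{nfblemma} is that no application of a short identity of $S$ can turn ${\bf U}_n$ into a word other than ${\bf U}_n$ itself. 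Establishing this single-step rigidity is the whole content of the proof.

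To prove it I would analyse one reduction step. Suppose ${\bf s} \approx {\bf t}$ is an identity of $S$ in fewer than $n/3$ variables that applies to ${\bf U}_n$, that is, ${\bf U}_n = A\,\Psi({\bf s})\,B$ for some substitution $\Psi \colon \mathfrak A \to \mathfrak A^+$ and (possibly empty) words $A,B$, the resulting word being $A\,\Psi({\bf t})\,B$. The key device is to fold the two contexts $A$ and $B$ into two fresh letters. Choosing variables $p,q$ not occurring in ${\bf s}$ or ${\bf t}$, set ${\bf u} = p\,{\bf s}\,q$ (omitting $p$ if $A$ is empty and $q$ if $B$ is empty) and define $\Theta \colon \mathfrak A \to \mathfrak A^+$ by $\Theta(p) = A$, $\Theta(q) = B$ and $\Theta = \Psi$ on the variables of ${\bf s}$. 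Then $\Theta({\bf u}) = {\bf U}_n$, and although $A$ and $B$ may carry many variables, ${\bf u}$ has at most two variables more than ${\bf s}$, hence fewer than $n/3 + 2 < n/2$ of them once $n > 12$. By the hypothesis of the corollary, ${\bf u}$ is therefore an isoterm for $S$.

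I would then finish using that $\sim_S$ is a congruence: from ${\bf s} \approx {\bf t}$ holding in $S$ we get $p\,{\bf s}\,q \sim_S p\,{\bf t}\,q$, so $S$ satisfies ${\bf u} \approx p\,{\bf t}\,q$. Because ${\bf u}$ is an isoterm this identity is trivial, forcing $p\,{\bf t}\,q = p\,{\bf s}\,q$ as words and hence ${\bf s} = {\bf t}$. Consequently $\Psi({\bf s}) = \Psi({\bf t})$, and the application returns $A\,\Psi({\bf s})\,B = {\bf U}_n$ unchanged, which is exactly the preservation of $P_n$ demanded by Fact \ref{nfblemma}; iterating over a derivation of ${\bf U}_n \approx {\bf V}_n$ from short identities would force ${\bf V}_n = {\bf U}_n$, the contradiction that yields the non-finite basis property.

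I expect the only delicate point to be the variable count, and it is the heart of the matter: an arbitrary factorisation ${\bf U}_n = A\,\Psi({\bf s})\,B$ can be \emph{uncompressed} to a short preimage ${\bf u}$ of ${\bf U}_n$ using only two extra letters for the two contexts, regardless of how long or variable-rich $A$ and $B$ are. This is legitimate precisely because a free-semigroup substitution may send one letter to an arbitrarily long word, and it is what keeps ${\bf u}$ inside the range where the corollary's isoterm hypothesis bites. The remaining bookkeeping — taking $n$ large relative to the number of variables in a hypothetical finite basis and disposing of the degenerate cases where $A$ or $B$ is empty — is routine.
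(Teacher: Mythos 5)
Your proposal is correct and follows essentially the same route as the paper's proof: you take $P_n$ to be the property of being equal to ${\bf U}_n$, fold the ambient contexts of an applied short identity into two fresh letters (your $p,q$ playing the role of the paper's $t_1,t_2$ with $\Theta(t_1{\bf u}t_2)={\bf U}_n$), and use the isoterm hypothesis on this slightly enlarged word to force each derivation step to fix ${\bf U}_n$, concluding via Fact \ref{nfblemma}. The only difference is expository: you spell out the triviality argument ($p{\bf s}q = p{\bf t}q$ hence ${\bf s}={\bf t}$) and the variable-count bookkeeping that the paper leaves implicit.
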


\begin{proof} Fix large enough number $n$. Since ${\bf U}_n$ is not an isoterm for $S$, the semigroup $S$ satisfies some non-trivial identity  ${\bf U}_n \approx {\bf V}_n$. Let $P_n$ be the property of being equal to the word ${\bf U}_n$. Let $\bf U$ be an arbitrary  word in $[\![{\bf U}_n ]\!]$ which has the property $P_n$. Let ${\bf u} \approx {\bf v}$ be an identity in less than $n/3$ variables such that the word $\bf u$ is applicable to $\bf U$. Then for some substitution $\Theta: \mathfrak A \rightarrow \mathfrak A ^+$, the word $\Theta({\bf u})$ is a subword of $\bf U$. If the word $\bf u$ does not depend on $t_1, t_2 \in \mathfrak A$ then we may assume that $\Theta(t_1{\bf u}t_2) = {\bf U} = {\bf U}_n$. Since the word $t_1{\bf u}t_2$ is an isoterm for $S$, the semigroup $S$ is non-finitely based by Fact \ref{nfblemma}.
\end{proof}

 Corollary \ref{nfbcor} is behind many existing non-finite basis arguments. For example, let $A_2^1$ denote the monoid obtained by adjoining an identity element to the semigroup $A_2= \langle a,b \mid aba=aa=a, bab=b, bb=0 \rangle$ of order five.
The monoid $A_2^1$ is INFB by the ``Zimin words" condition of M. Sapir \cite{MS}. In \cite{Tr2}, A. Trahtman independently proved that $A_2^1$ is non-finitely
based by way of the set of identities $\{[Xn]y[nX]y[Xn] \approx [Xn]y[nX]y[Xn]y[nX]y[Xn] \mid n>1\}$. He proved it by showing that
every word $\bf u$ in less than $n/3$ variables is an isoterm for $A_2^1$ whenever $\Theta({\bf u})= [Xn]y[nX]y[Xn]$ for some
substitution $\Theta: \mathfrak A \rightarrow \mathfrak A ^+$.

 However, Corollary \ref{nfbcor} is not suitable for explaining why the semigroups $L$ and $L^1$ mentioned in the introduction are NFB, because the word $xtx$ is not an isoterm for either of these semigroups. Now we introduce some language suitable for describing possible properties $P_n$ which one might consider while using
 Fact \ref{nfblemma}.

 If some variable $x$ occurs $n \ge 0$ times in a word ${\bf u}$ then we write $occ_{\bf u}(x)=n$ and say that $x$ is {\em $n$-occurring} in ${\bf u}$.
The set $\con({\bf u}) = \{x \in \mathfrak A \mid occ_{\bf u}(x)>0 \}$ of all variables contained in a word ${\bf u}$ is called the {\em content of ${\bf u}$}.

We use $_{i{\bf u}}x$ to refer to the $i^{th}$ from the left occurrence of $x$ in ${\bf u}$. We use $_{last {\bf u}}x$ to refer to the last occurrence of $x$ in ${\bf u}$.  The set $\ocs({\bf u}) = \{ {_{i{\bf u}}x} \mid x \in \mathfrak A, 1 \le i \le occ_{\bf u} (x) \}$ of all occurrences of all variables in ${\bf u}$ is called the {\em occurrence set of ${\bf u}$}. The word ${\bf u}$ induces a (total) order $<_{\bf u}$ on the set $\ocs({\bf u})$ defined by ${_{i{\bf u}}x} <_{\bf u} {_{j{\bf u}}y}$ if and only if the $i^{th}$ occurrence of $x$ precedes the $j^{th}$ occurrence of $y$ in ${\bf u}$. For example, $\ocs(xyx) = \{ {_{1xyx}x}, {_{1xyx}y}, {_{2xyx}x} \}$ together with the order $<_{xyx}$ is a structure $(\ocs(xyx), <_{xyx})$ isomorphic to the three-element totally ordered set: $({_{1xyx}x}) <_{xyx} ({_{1xyx}y}) <_{xyx} ({_{2xyx}x})$.

\begin{definition} \label{trans}

We say that a set $\Omega$ is a {\em good collection of maps} if each element $f_{{\bf u}, {\bf v}} \in \Omega$ is an injection from a subset of $\ocs({\bf u})$ into the set $\ocs({\bf v})$ of some identity $({\bf u} \approx {\bf v}) \in \mathfrak A^+ \times \mathfrak A^+$ and each of the following conditions is satisfied:

(i) for each $({\bf u} \approx {\bf v}) \in \mathfrak A^+ \times \mathfrak A^+$ there is at most one map  $f_{{\bf u}, {\bf v}} \in \Omega$;

(ii) if $f_{{\bf u}, {\bf u}} \in \Omega$ then $f_{{\bf u}, {\bf u}}$ is the identity map on its domain;

(iii) if the composite $f_{{\bf u}, {\bf v}} \circ f_{{\bf v}, {\bf w}}$
is defined on $c \in \ocs({\bf u})$, then $f_{{\bf u}, {\bf w}} \in \Omega$ and $f_{{\bf u}, {\bf w}} (c) = (f_{{\bf u}, {\bf v}} \circ f_{{\bf v}, {\bf w}}) (c)$.

\end{definition}

 If $X \subseteq \ocs({\bf u})$ and $f_{{\bf u}, {\bf v}}$ is an injection from a subset of $\ocs({\bf u})$ into the set $\ocs({\bf v})$
 then we say that the set $X$ is {\em $f_{{\bf u}, {\bf v}}$-stable} in an identity ${\bf u} \approx {\bf v}$
if the map $f_{{\bf u}, {\bf v}}$ is defined on $X$ and is an isomorphism of the (totally) ordered sets $(X, <_{\bf u})$ and $(f_{{\bf u}, {\bf v}}(X), <_{\bf v})$.
Otherwise, we say that the set $X$ is {\em $f_{{\bf u}, {\bf v}}$-unstable} in ${\bf u} \approx {\bf v}$.

\begin{fact} \label{gcm} Let $\Omega$ be a good collection of maps. Then the following is true:

if $f_{{\bf u}, {\bf v}}, f_{{\bf v}, {\bf w}}, f_{{\bf u}, {\bf w}} \in \Omega$  and $X \subset  \ocs({\bf u})$, the set $X$ is $f_{{\bf u}, {\bf v}}$-stable in the identity ${\bf u} \approx {\bf v}$ and the set $f_{{\bf u}, {\bf v}} (X)$ is $f_{{\bf v}, {\bf w}}$-stable in the identity ${\bf v} \approx {\bf w}$ then the set $X$ is $f_{{\bf u}, {\bf w}}$-stable in the identity ${\bf u} \approx {\bf w}$.
\end{fact}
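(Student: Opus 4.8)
The plan is to show that on the set $X$ the map $f_{{\bf u},{\bf w}}$ coincides with the composite $f_{{\bf u},{\bf v}} \circ f_{{\bf v},{\bf w}}$, and then to invoke the elementary fact that a composite of two order-isomorphisms is again an order-isomorphism. First I would check that the composite is defined on every point of $X$. Fix $c \in X$. Since $X$ is $f_{{\bf u},{\bf v}}$-stable in ${\bf u} \approx {\bf v}$, the map $f_{{\bf u},{\bf v}}$ is defined on $c$ and $f_{{\bf u},{\bf v}}(c) \in f_{{\bf u},{\bf v}}(X)$. Since $f_{{\bf u},{\bf v}}(X)$ is $f_{{\bf v},{\bf w}}$-stable in ${\bf v} \approx {\bf w}$, the map $f_{{\bf v},{\bf w}}$ is defined on $f_{{\bf u},{\bf v}}(c)$. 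Hence $f_{{\bf u},{\bf v}} \circ f_{{\bf v},{\bf w}}$ is defined on $c$, and condition (iii) of Definition \ref{trans} yields $f_{{\bf u},{\bf w}}(c) = (f_{{\bf u},{\bf v}} \circ f_{{\bf v},{\bf w}})(c)$. In particular $f_{{\bf u},{\bf w}}$ is defined on all of $X$, and $f_{{\bf u},{\bf w}}(X) = f_{{\bf v},{\bf w}}(f_{{\bf u},{\bf v}}(X))$.

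Second, I would verify the order-isomorphism property. For arbitrary $a, b \in X$, stability of $X$ gives $a <_{\bf u} b$ if and only if $f_{{\bf u},{\bf v}}(a) <_{\bf v} f_{{\bf u},{\bf v}}(b)$; applying stability of $f_{{\bf u},{\bf v}}(X)$ to the pair $f_{{\bf u},{\bf v}}(a), f_{{\bf u},{\bf v}}(b) \in f_{{\bf u},{\bf v}}(X)$ gives $f_{{\bf u},{\bf v}}(a) <_{\bf v} f_{{\bf u},{\bf v}}(b)$ if and only if $(f_{{\bf u},{\bf v}} \circ f_{{\bf v},{\bf w}})(a) <_{\bf w} (f_{{\bf u},{\bf v}} \circ f_{{\bf v},{\bf w}})(b)$. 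Chaining these two biconditionals and using the identification from the first step, I get $a <_{\bf u} b$ if and only if $f_{{\bf u},{\bf w}}(a) <_{\bf w} f_{{\bf u},{\bf w}}(b)$, which is precisely the assertion that $f_{{\bf u},{\bf w}}$ restricted to $X$ is an isomorphism of the ordered set $(X, <_{\bf u})$ onto $(f_{{\bf u},{\bf w}}(X), <_{\bf w})$. Thus $X$ is $f_{{\bf u},{\bf w}}$-stable in ${\bf u} \approx {\bf w}$, as required.

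I do not expect any serious obstacle, since the statement is essentially the transitivity of order-embeddability; the only point demanding care is the bookkeeping of the composition convention. Specifically, I must ensure that the diagrammatic order used in condition (iii) — in which $f_{{\bf u},{\bf v}}$ is applied first to $c \in \ocs({\bf u})$ and then $f_{{\bf v},{\bf w}}$ — matches the way the two stability hypotheses are composed, so that the set $f_{{\bf u},{\bf v}}(X)$ on which the second hypothesis is invoked really is where $f_{{\bf v},{\bf w}}$ acts at the relevant points. Once this alignment is observed, both steps above are immediate, and the role of condition (iii) is exactly to guarantee that the order-isomorphism obtained by composing the two given isomorphisms is the single $\Omega$-map $f_{{\bf u},{\bf w}}$ itself, rather than merely some order-isomorphism agreeing with it on $X$.
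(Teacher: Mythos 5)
Your proof is correct and takes essentially the same route as the paper's own argument: both identify $f_{{\bf u},{\bf w}}$ on $X$ with the composite $f_{{\bf u},{\bf v}} \circ f_{{\bf v},{\bf w}}$ via Definition \ref{trans}(iii) and then conclude by noting that a composite of two order-isomorphisms is an order-isomorphism. Your version merely spells out the pointwise definedness bookkeeping that the paper leaves implicit.
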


\begin{proof}
Since the set $X$ is $f_{{\bf u}, {\bf v}}$-stable in the identity ${\bf u} \approx {\bf v}$ and the set $f_{{\bf u}, {\bf v}} (X)$ is $f_{{\bf v}, {\bf w}}$-stable in the identity ${\bf v} \approx {\bf w}$, the map $f_{{\bf u}, {\bf w}}$ is defined on $X$ and by Definition \ref{trans}(iii) we have $(f_{{\bf u}, {\bf v}} \circ f_{{\bf v}, {\bf w}}) (X) = f_{{\bf u}, {\bf w}} (X)$. Since $f_{{\bf u}, {\bf w}}$ is a composition of two isomorphisms, it is an isomorphism from $X$ to $f_{{\bf u}, {\bf w}} (X)$. Consequently, the set $X$ is $f_{{\bf u}, {\bf w}}$-stable in the identity ${\bf u} \approx {\bf w}$.
\end{proof}

 A {\em derivation} of an identity ${\bf U} \approx {\bf V}$ from $\Sigma$ is a sequence of words ${\bf U}={\bf U}_1 \approx {\bf U}_2 \approx \dots \approx {\bf U}_l={\bf V}$ and substitutions $\Theta_1, \dots, \Theta_{l-1} (\mathfrak A \rightarrow \mathfrak A ^+$) such that for each  $i=1, \dots, l-1$ we have ${\bf U}_i=\Theta_i({\bf u}_i)$ and  ${\bf U}_{i+1}=\Theta_i({\bf v}_i)$ for some identity ${\bf u}_i \approx {\bf v}_i \in \Sigma$. According to \cite[Section 9]{OSPV}, each set of identities $\Sigma$ in at most $n$ variables is a subset of a certain set of
 identities $\Sigma'$ in at most $n+2$ variables such that an identity $\tau$ can be derived from $\Sigma$ in the usual sense if and only if $\tau$ can be derived from $\Sigma'$ in the sense defined in the previous sentence.

 We say that a variable $x$ is {\em stable} in an identity  ${\bf u} \approx {\bf v}$ if $occ_{\bf u}(x) = occ_{\bf v}(x)$. Otherwise, we say that $x$ is {\em unstable} in ${\bf u} \approx {\bf v}$. The following statement is a specialization of Fact \ref{nfblemma}.

\begin{lemma} \label{nfblemma1} Let $S$ be a semigroup.

Suppose that there exists a good collection of maps $\{ f_{{\bf w}, {\bf w'}} \mid ({\bf w} \approx {\bf w'}) \in \mathfrak A^+ \times \mathfrak A^+\}$ such that for each $n$ large enough one can find an identity ${\bf U}_n \approx {\bf V}_n$ of $S$ in at least $n$ variables
and some possibly empty sets $X_1 \subseteq \ocs({\bf U}_n), \dots, X_k \subseteq \ocs({\bf U}_n)$ and $\mathfrak X \subseteq \con({\bf U}_n)$ such that each of the following conditions is satisfied:

(I) the map $f_{{\bf U}_n, {\bf U}_n}$ is defined on $X_1 \cup \dots \cup X_k$;

(II) either for some $1 \le i \le k$ the set $X=X_i$ is $f_{{\bf U}_n, {\bf V}_n}$-unstable in ${\bf U}_n \approx {\bf V}_n$ or some variable $x \in \mathfrak X$ is unstable in ${\bf U}_n \approx {\bf V}_n$;

(III) if ${\bf U} \in [\![{\bf U}_n]\!]_S$, each variable $x \in \mathfrak X$ is stable in ${\bf U}_n \approx {\bf U}$ and all sets $X_1, \dots, X_k$ are $f_{{\bf U}_n, {\bf U}}$-stable in ${\bf U}_n \approx {\bf U}$ then for every identity ${\bf u} \approx {\bf v}$ of $S$ in less than $n/4$ variables and every substitution  $\Theta: \mathfrak A \rightarrow \mathfrak A^+$ such that $\Theta({\bf u})={\bf U}$, each variable $x \in \mathfrak X$ is stable in ${\bf U} \approx \Theta({\bf v})$
and all sets $f_{{\bf U}_n, {\bf U}}(X_1), \dots f_{{\bf U}_n, {\bf U}}(X_k)$ are $f_{{\bf U}, \Theta({\bf v})}$-stable in ${\bf U} \approx \Theta({\bf v})$.

Then the semigroup $S$ is non-finitely based.
\end{lemma}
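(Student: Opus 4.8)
The plan is to deduce the lemma directly from Fact \ref{nfblemma} by choosing the property $P_n$ so that conditions (I)--(III) translate into the three hypotheses of that Fact. For a word ${\bf W} \in [\![{\bf U}_n]\!]_S$, I would say that ${\bf W}$ \emph{has property $P_n$} if every variable $x \in \mathfrak X$ is stable in ${\bf U}_n \approx {\bf W}$ and each of the sets $X_1, \dots, X_k$ is $f_{{\bf U}_n, {\bf W}}$-stable in ${\bf U}_n \approx {\bf W}$ (in particular $f_{{\bf U}_n, {\bf W}}$ is then defined on each $X_i$). Since ${\bf U}_n \approx {\bf V}_n$ is an identity of $S$ in at least $n$ variables, the word ${\bf U}_n$ contains at least $n$ variables and these identities involve an unbounded number of variables as $n$ grows, so the global shape required by Fact \ref{nfblemma} is in place.

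First I would verify that ${\bf U}_n$ has property $P_n$. By condition (I) the map $f_{{\bf U}_n, {\bf U}_n}$ is defined on $X_1 \cup \dots \cup X_k$, and by Definition \ref{trans}(ii) it is the identity map on its domain; the identity map is an order-isomorphism, so each $X_i$ is $f_{{\bf U}_n, {\bf U}_n}$-stable, and trivially every $x \in \mathfrak X$ is stable in ${\bf U}_n \approx {\bf U}_n$. That ${\bf V}_n$ fails $P_n$ is precisely condition (II): either some $X_i$ is $f_{{\bf U}_n, {\bf V}_n}$-unstable, or some $x \in \mathfrak X$ is unstable in ${\bf U}_n \approx {\bf V}_n$.

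The heart of the argument is the preservation of $P_n$ under a short identity, where condition (III) and Fact \ref{gcm} combine. Suppose ${\bf U} \in [\![{\bf U}_n]\!]_S$ has property $P_n$ and an identity ${\bf u} \approx {\bf v}$ of $S$ in fewer than $n/4$ variables is applied to ${\bf U}$, say $\Theta({\bf u}) = {\bf U}$ and ${\bf U}' = \Theta({\bf v})$. Condition (III) supplies that each $x \in \mathfrak X$ is stable in ${\bf U} \approx {\bf U}'$ and that each set $f_{{\bf U}_n, {\bf U}}(X_i)$ is $f_{{\bf U}, {\bf U}'}$-stable in ${\bf U} \approx {\bf U}'$. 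Because $\Omega$ contains all three maps $f_{{\bf U}_n, {\bf U}}$, $f_{{\bf U}, {\bf U}'}$ and $f_{{\bf U}_n, {\bf U}'}$, and because $f_{{\bf U}_n, {\bf U}}$ is already defined on $X_i$ (from $P_n$ of ${\bf U}$) while $f_{{\bf U}, {\bf U}'}$ is defined on $f_{{\bf U}_n, {\bf U}}(X_i)$, the composite $f_{{\bf U}_n, {\bf U}} \circ f_{{\bf U}, {\bf U}'}$ is defined on $X_i$; Fact \ref{gcm} then gives that $X_i$ is $f_{{\bf U}_n, {\bf U}'}$-stable in ${\bf U}_n \approx {\bf U}'$. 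Similarly $occ_{{\bf U}_n}(x) = occ_{\bf U}(x) = occ_{{\bf U}'}(x)$ for each $x \in \mathfrak X$. Hence ${\bf U}' = \Theta({\bf v})$ again has property $P_n$, i.e. short identities preserve $P_n$.

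With all hypotheses of Fact \ref{nfblemma} verified (its illustrative bound $n/3$ specialized here to $n/4$), the Fact yields that $S$ is non-finitely based by way of $\{{\bf U}_n \approx {\bf V}_n\}$. The only bookkeeping worth flagging concerns the variable count: were $S$ finitely based, its identities would follow from a basis in at most some fixed $m$ variables, which by the reformulation in \cite[Section 9]{OSPV} may be replaced by one in at most $m+2$ variables whose derivations proceed via the whole-word substitutions treated in (III); for $n > 4(m+2)$ every such step is an identity in fewer than $n/4$ variables, so $P_n$ propagates along any derivation of ${\bf V}_n$ from ${\bf U}_n$, contradicting that ${\bf U}_n$ has $P_n$ while ${\bf V}_n$ does not. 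I expect this reconciliation of the bounds, together with the verification that the composite map is genuinely defined on each $X_i$ so that Fact \ref{gcm} applies, to be the main (and essentially the only) delicate points; the remainder is a direct transcription of (I)--(III) into the hypotheses of Fact \ref{nfblemma}.
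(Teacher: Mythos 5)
Your proposal is correct and follows essentially the same route as the paper: the paper's proof likewise normalizes a hypothetical derivation of ${\bf U}_n \approx {\bf V}_n$ via \cite[Section 9]{OSPV} into whole-word substitution steps in at most $m+2$ variables, starts the induction from Condition (I) and Definition \ref{trans}(ii), propagates stability of the $X_i$ and of the variables in $\mathfrak X$ along the chain using Condition (III) together with Fact \ref{gcm}, and derives the contradiction from Condition (II). Your only cosmetic difference is packaging the induction as an explicit property $P_n$ fed into Fact \ref{nfblemma} (and fixing the bound to $n>4(m+2)$), which matches the paper's own description of the lemma as a specialization of that Fact.
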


\begin{proof} Take $m>0$ and let $\Sigma$ be a set of identities of $S$ in at most $m$ variables.
By our assumption, there exists a good collection of maps $\Omega = \{ f_{{\bf w}, {\bf v}} \mid ({\bf w}, {\bf v}) \in \mathfrak A^+ \times \mathfrak A^+\}$ such that
one can find an identity ${\bf U}_{4m+8} \approx {\bf V}_{4m+8}$ of $S$ in at least $4m+8$ variables and some sets $X_1 \subseteq \ocs({\bf U}_{4m+8}), \dots, X_k \subseteq \ocs({\bf U}_{4m+8})$ and $\mathfrak X \subseteq \con({\bf U}_{4m+8})$ such that Conditions (I) -- (III) are satisfied.

If the identity ${\bf U}_{4m+8} \approx {\bf V}_{4m+8}$ is a consequence from $\Sigma$ then one can find a set of identities $\Sigma'$ in at most $m+2$ variables, a sequence of words ${\bf U}_{4m+8}={\bf W}_1 \approx {\bf W}_2 \approx \dots \approx {\bf W}_l={\bf V}_{4m+8}$ and substitutions $\Theta_1, \dots, \Theta_{l-1} (\mathfrak A \rightarrow \mathfrak A ^+$) such that for each  $i=1, \dots, l-1$ we have ${\bf W}_i=\Theta_i({\bf u}_i)$ and  ${\bf W}_{i+1}=\Theta_i({\bf v}_i)$ for some identity ${\bf u}_i \approx {\bf v}_i \in \Sigma'$.

Since by Condition (I) the map $f_{{\bf W}_1, {\bf W}_1}$ is defined on $X_1 \cup \dots \cup X_k$, it is the identity map on $X_1 \cup \dots \cup X_k$
in view of Definition \ref{trans} (ii). Therefore, for each $1 \le i \le k$,
the set $X_i$ is $f_{{\bf W_1}, {\bf W_1}}$-stable in ${\bf W}_1 \approx {\bf W}_1$. Now Condition (III) implies that the set $X_i$ is $f_{{\bf W}_1, {\bf W}_2}$-stable in ${\bf W}_1 \approx {\bf W}_2$ and the set $f_{{\bf W}_1, {\bf W}_2}(X_i)$ is $f_{{\bf W}_2, {\bf W}_3}$-stable in ${\bf W}_{2} \approx {\bf W}_3$. Therefore, by Fact \ref{gcm}, the set $X_i$ is $f_{{\bf W}_1, {\bf W}_3}$-stable in ${\bf W}_1 \approx {\bf W}_3$. And so on. Eventually, we obtain that the set $X_i$ is $f_{{\bf W}_1, {\bf W}_l}$-stable in ${\bf U}_{4m+8}={\bf W}_1 \approx {\bf W}_l = {\bf V}_{4m+8}$ for each $1 \le i \le k$.

Condition (III) also implies that each variable in $\mathfrak X$ is stable in ${\bf U}_{4m+8} \approx {\bf V}_{4m+8}$.
To avoid a contradiction with Condition (II) we must conclude that the identity ${\bf U}_{4m+8} \approx {\bf V}_{4m+8}$ is not a consequence from $\Sigma$.
Since $m$ and $\Sigma$ were arbitrary, $S$ is non-finitely based.
\end{proof}

If ${\bf u}$ and ${\bf v}$ are two words with $\con({\bf u}) \cap \con({\bf v}) \ne \emptyset$ then $l_{{\bf u}, {\bf v}}$ is a map from $\{ _{i{\bf u}}x \mid x \in \con({\bf u}), i \le \min(occ_{\bf u} (x),occ_{\bf v} (x))\}$ to $\{ _{i{\bf v}}x \mid x \in \con({\bf v}), i \le \min(occ_{\bf u} (x),occ_{\bf v} (x))\}$ defined by $l_{{\bf u}, {\bf v}} (_{i{\bf u}}x) = {_{i{\bf v}}x}$. For example, if ${\bf u} = zxyzyyx$ and ${\bf v} = xxyxpp$ then $l_{{\bf u}, {\bf v}}$ is a bijection between $\{_{1{\bf u}}x, {_{1{\bf u}}y}, {_{2{\bf u}}x} \}$ and $\{_{1{\bf v}}x, {_{1{\bf v}}y}, {_{2{\bf v}}x} \}$. The set $\{_{1{\bf u}}x, {_{1{\bf u}}y}, {_{2{\bf u}}x} \}$
is $l_{{\bf u}, {\bf v}}$ - unstable in ${\bf u} \approx {\bf v}$ but the set $\{_{1{\bf u}}x, {_{1{\bf u}}y} \}$
is $l_{{\bf u}, {\bf v}}$ - stable in ${\bf u} \approx {\bf v}$.

It is easy to check that the set $\{ l_{{\bf u}, {\bf v}} \mid ({\bf u} \approx {\bf v}) \in \mathfrak A^+ \times \mathfrak A^+, \con({\bf u}) \cap \con({\bf v}) \ne \emptyset \}$ satisfies all three conditions of Definition \ref{trans}, and consequently is a good collection of maps.

An identity ${\bf u} \approx {\bf v}$ is called {\em regular} if $\con({\bf u}) = \con({\bf v})$. If ${\bf u} \approx {\bf v}$ is a regular identity then evidently,
${\bf u} \approx {\bf v}$ is trivial if and only if the set $\ocs({\bf u})$ is $l_{{\bf u}, {\bf v}}$ - stable in ${\bf u} \approx {\bf v}$ and each variable $x \in \con({\bf u})$ is stable in ${\bf u} \approx {\bf v}$.

Notice that under an additional assumption that $S$ satisfies only regular identities, Corollary \ref{nfbcor} can be obtained readily from  Lemma \ref{nfblemma1}.
(Simply choose $\{ l_{{\bf u}, {\bf v}} \mid ({\bf u} \approx {\bf v}) \in \mathfrak A^+ \times \mathfrak A^+, \con({\bf u}) \cap \con({\bf v}) \ne \emptyset \}$
for the role of the good collection of maps, $k=1$, $X=X_1 = \ocs({\bf U}_n)$ and $\mathfrak X = \con({\bf U}_n)$.)

The rest of this article is devoted to twelve applications of Lemma \ref{nfblemma1}.
Almost every time we use Lemma \ref{nfblemma1} (except for Theorem \ref{PSC}) we will choose $k=1$.
In addition, almost every time we use Lemma \ref{nfblemma1} (except for Theorem \ref{EL}) we will choose $\{ l_{{\bf u}, {\bf v}} \mid ({\bf u} \approx {\bf v}) \in \mathfrak A^+ \times \mathfrak A^+, \con({\bf u}) \cap \con({\bf v}) \ne \emptyset \}$ for the role of the good collection of maps. Since $l_{{\bf u}, {\bf u}}$ is the identity map of $\ocs({\bf u})$ for each ${\bf u} \in \mathfrak A^+$, we will omit checking Condition (I) in Lemma \ref{nfblemma1}.

Let $\Theta: \mathfrak A \rightarrow \mathfrak A^+$ be a substitution such that $\Theta({\bf u})={\bf U}$. Then
$\Theta$ induces a map $\Theta_{\bf u}$ from $\ocs({\bf u})$
into  subsets of $\ocs({\bf U})$ as follows.
If $1 \le i \le occ_{\bf u}(x)$ then $\Theta_{\bf u}({_{i{\bf u}}x})$ denotes the set of all elements of $\ocs({\bf U})$
contained in the subword of ${\bf U}$ of the form $\Theta(x)$ that corresponds to the $i^{th}$ occurrence of variable $x$ in ${\bf u}$. For example, if $\Theta(x)=ab$ and $\Theta(y)=bab$ then $\Theta_{xyx}({_{2(xyx)}x})=\{{_{3(abbabab)}a}, {_{4(abbabab)}b} \}$.
Evidently, for each $d \in \ocs({\bf u})$ the set $\Theta_{\bf u} (d)$ is an interval in $(\ocs({\bf U}), <_{\bf U})$.
Now we define a function $\Theta^{-1}_{\bf u}$ from $\ocs({\bf U})$ to $\ocs({\bf u})$ as follows.
If $c \in \ocs({\bf U})$ then $\Theta^{-1}_{\bf u}(c)=d$ when $\Theta_{\bf u} (d)$ contains $c$.
For example, $\Theta^{-1}_{xyx}({_{3(abbabab)}a})= {_{2(xyx)}x}$.
It is easy to see that if ${\bf U}=\Theta({\bf u})$ then the function $\Theta^{-1}_{\bf u}$ is a homomorphism from $(\ocs({\bf U}), <_{\bf U})$ to $(\ocs({\bf u}), <_{\bf u})$, i.e. \ for every $c,d \in \ocs({\bf U})$ we have $\Theta^{-1} _{\bf u}(c) \le _{\bf u} \Theta^{-1}_{\bf u}(d)$  whenever $c <_{\bf U} d$.
The following fact can be easily verified.

\begin{fact} \label{thetainv} Let ${\bf u}$ be a word and $\Theta: \mathfrak A \rightarrow \mathfrak A^+$ be a substitution such that $\Theta({\bf u})={\bf U}$.

   If  $\Theta^{-1} _{\bf u}({_{i{\bf U}}x})=({_{j{\bf u}}y})$ for some $x \in \con({\bf U})$ and $y \in \con({\bf u})$ then $occ_{\bf u}(y) \le occ_{\bf U}(x)$
   and $j \le i$.

\end{fact}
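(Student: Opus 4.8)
The plan is to unwind the definition of $\Theta^{-1}_{\bf u}$ and then argue both inequalities by counting occurrences of $x$ among the blocks into which $\Theta$ decomposes ${\bf U}$. First I would record what the hypothesis says: by the very definition of $\Theta^{-1}_{\bf u}$, the equality $\Theta^{-1}_{\bf u}({_{i{\bf U}}x})={_{j{\bf u}}y}$ means that the occurrence ${_{i{\bf U}}x}$ belongs to the set $\Theta_{\bf u}({_{j{\bf u}}y})$, i.e.\ the $i^{th}$ occurrence of $x$ in ${\bf U}$ lies inside the copy of the block $\Theta(y)$ that corresponds to the $j^{th}$ occurrence of $y$ in ${\bf u}$. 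In particular $x$ occurs inside $\Theta(y)$, so $occ_{\Theta(y)}(x) \ge 1$; this single observation drives both conclusions.

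For the inequality $occ_{\bf u}(y) \le occ_{\bf U}(x)$, I would note that ${\bf U}=\Theta({\bf u})$ contains exactly $occ_{\bf u}(y)$ pairwise disjoint copies of $\Theta(y)$, one for each occurrence of $y$ in ${\bf u}$ (each being the interval $\Theta_{\bf u}(d)$ in $(\ocs({\bf U}),<_{\bf U})$ attached to an occurrence $d$ of $y$). Since each such copy contains at least $occ_{\Theta(y)}(x)\ge 1$ occurrences of $x$, summing over the copies gives $occ_{\bf U}(x) \ge occ_{\bf u}(y)\cdot occ_{\Theta(y)}(x) \ge occ_{\bf u}(y)$, which is the desired bound.

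For $j \le i$, I would consider the first $j-1$ copies of $\Theta(y)$ in ${\bf U}$, namely those corresponding to the first $j-1$ occurrences of $y$ in ${\bf u}$. Because $\Theta$ is a homomorphism of the free semigroup it preserves the left-to-right order of letter occurrences, so all of these copies lie strictly to the left of the $j^{th}$ copy, which is the one containing ${_{i{\bf U}}x}$. Each of these $j-1$ earlier copies contains at least one occurrence of $x$, so at least $j-1$ occurrences of $x$ precede ${_{i{\bf U}}x}$ in $(\ocs({\bf U}),<_{\bf U})$. As there are exactly $i-1$ occurrences of $x$ preceding its $i^{th}$ occurrence, we obtain $j-1 \le i-1$, i.e.\ $j \le i$.

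I do not expect a genuine obstacle: the statement is elementary and reduces to bookkeeping about the way $\Theta$ partitions $\ocs({\bf U})$ into ordered blocks. The only point that needs mild care is justifying that the copies of $\Theta(y)$ are disjoint and appear in ${\bf U}$ in the same order as the occurrences of $y$ appear in ${\bf u}$; both facts are immediate from $\Theta$ being a free-semigroup homomorphism together with the already-noted remark that each $\Theta_{\bf u}(d)$ is an interval of $(\ocs({\bf U}),<_{\bf U})$.
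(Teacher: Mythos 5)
Your proof is correct: the paper gives no proof of Fact \ref{thetainv} (it is stated as a fact that ``can be easily verified''), and your block-counting argument is precisely the routine verification intended --- since ${_{i{\bf U}}x}$ lies in one copy of $\Theta(y)$, the variable $x$ occurs in the word $\Theta(y)$ itself, so each of the $occ_{\bf u}(y)$ pairwise disjoint copies of $\Theta(y)$ in ${\bf U}$ contributes at least one occurrence of $x$, yielding $occ_{\bf u}(y) \le occ_{\bf U}(x)$. Your second step is equally sound: because the intervals $\Theta_{\bf u}(d)$ appear in ${\bf U}$ in the same order as the occurrences $d$ appear in ${\bf u}$, the $j-1$ earlier copies of $\Theta(y)$ supply $j-1$ occurrences of $x$ strictly preceding ${_{i{\bf U}}x}$, and comparing with the exactly $i-1$ occurrences of $x$ that precede the $i^{th}$ one gives $j \le i$.
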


 If $\mathfrak X$ is a set of variables then we write ${\bf u}(\mathfrak X)$ to refer to the word obtained from ${\bf u}$ by deleting all occurrences of all variables that are not in $\mathfrak X$ and say that the word ${\bf u}$ {\em deletes} to the word ${\bf u}(\mathfrak X)$. If $\mathfrak X = \{x_1, \dots, x_n\}$ then we write
 ${\bf u}(x_1, \dots, x_n)$ instead of ${\bf u}(\{x_1, \dots, x_n\})$.
 For simplicity, we sometimes write  ${_ix} <_{\bf U} {_jy}$  instead of ${_{i{\bf U}}x} <_{\bf U} {_{j{\bf U}}y}$.
 If a variable $t$ is {\em linear} (1-occurring) in a word ${\bf u}$ then we use ${_{\bf u}t}$ to denote
 the only occurrence of $t$ in ${\bf u}$. We use $\lin ({\bf u})$ to denote the set of all linear variables in a word $\bf u$ and $\non ({\bf u})$ to denote the set of all non-linear variables in $\bf u$.

  We say that a set $X \subseteq \ocs({\bf u})$ is {$f_{{\bf u}, {\bf v}}$-stable in ${\bf u}$  with respect to a semigroup $S$} if $X$ is $f_{{\bf u}, {\bf v}}$-stable in any identity of $S$ of the form ${\bf u} \approx {\bf v}$.

\begin{theorem} \label{SL1} Let $S$ be a monoid that satisfies the following conditions:

(i) For each $n$ large enough, $S$ satisfies the identity ${\bf U}_n= [Xn][Yn][nX][nY] \approx [Yn][Xn][nY][nX]={\bf V}_n$;

(ii) For each $m+c>0$, the set $\{{_1x}, t\}$ is $l_{{\bf u}, {\bf v}}$-stable in $x^mtx^c$ with respect to $S$;

(iii) For each $m,c>1$ the equivalence class $[\![x^my^c]\!]_S$ contains only words of the form $x^iy^j$ for some $i,j >1$;

(iv) For each $m,c>0$ and $d>1$ the equivalence class $[\![x^my^dx^c]\!]_S$ contains only words of the form $x^iy^jx^k$ for some $i,k >0$ and $j>1$.

  Then the monoid $S$ is non-finitely based.

\end{theorem}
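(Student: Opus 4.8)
The plan is to deduce the theorem from Lemma~\ref{nfblemma1}, taking for the good collection of maps the family $\{l_{{\bf u},{\bf v}}\}$ and choosing $k=1$; Condition~(I) is then automatic because $l_{{\bf U}_n,{\bf U}_n}$ is the identity map. By hypothesis~(i) the identity ${\bf U}_n\approx{\bf V}_n$, with ${\bf U}_n=[Xn][Yn][nX][nY]$ and ${\bf V}_n=[Yn][Xn][nY][nX]$, holds in $S$ for all large $n$, and ${\bf U}_n$ involves $2n$ variables. I would take $\mathfrak X=\{x_1,y_1\}$ and let $X_1=\{{}_1x_1,\ {}_2x_1,\ {}_1y_1\}$ record the relative position of the first occurrence of $y_1$ with respect to the two occurrences of $x_1$. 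Condition~(II) is then immediate: in ${\bf U}_n$ the block $[Yn]$ sits between $[Xn]$ and $[nX]$, so ${}_1x_1<_{{\bf U}_n}{}_1y_1<_{{\bf U}_n}{}_2x_1$ (the first $y_1$ lies between the two $x_1$'s), whereas in ${\bf V}_n$ one has ${}_1y_1<_{{\bf V}_n}{}_1x_1<_{{\bf V}_n}{}_2x_1$; hence $l_{{\bf U}_n,{\bf V}_n}$ does not preserve the order type of $X_1$, and $X_1$ is $l_{{\bf U}_n,{\bf V}_n}$-unstable.

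The substance of the proof is Condition~(III). So I fix a word ${\bf U}\in[\![{\bf U}_n]\!]_S$ in which $x_1,y_1$ each occur exactly twice (stability of $\mathfrak X$) and in which ${}_1x_1<_{\bf U}{}_1y_1<_{\bf U}{}_2x_1$ (stability of $X_1$). The first task is to read off enough of the shape of ${\bf U}$. Since $S$ is a monoid, deletion of variables is an allowed substitution, so for every $j$ one has ${\bf U}(x_1,x_j)\sim_S{\bf U}_n(x_1,x_j)=x_1x_j^2x_1$; because $x_1$ occurs twice, hypothesis~(iv) forces ${\bf U}(x_1,x_j)=x_1x_j^{b}x_1$ with $b>1$. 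Thus the two occurrences of $x_1$ are the first and the last among all $x$-occurrences of ${\bf U}$, and the relation ``${}_1y_1$ lies between the two $x_1$'s'' says precisely that the first occurrence of $y_1$ appears before the $x$-material of ${\bf U}$ has ended.

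Next I take an arbitrary identity ${\bf u}\approx{\bf v}$ of $S$ with fewer than $n/4$ variables and a substitution $\Theta$ with $\Theta({\bf u})={\bf U}$, and I must show that $x_1,y_1$ still occur twice in $\Theta({\bf v})$ and that ${}_1x_1<_{\Theta({\bf v})}{}_1y_1<_{\Theta({\bf v})}{}_2x_1$. The mechanism is to pull the three marked occurrences back through the order homomorphism $\Theta^{-1}_{\bf u}$. By Fact~\ref{thetainv}, ${}_1x_1$ and ${}_1y_1$ pull back to first occurrences ${}_1w$ and ${}_1w''$ of variables $w,w''$ of ${\bf u}$, each of multiplicity at most two, while ${}_2x_1$ pulls back to some occurrence of a variable $w'$ of multiplicity at most two. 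Preservation of the betweenness relation is thereby reduced to a statement about ${\bf u}\approx{\bf v}$ restricted to these few variables of ${\bf u}$: when the two occurrences of $x_1$ come from the two occurrences of a single non-linear variable $w=w'$ and $w''$ is linear, the relevant deletion of ${\bf u}\approx{\bf v}$ has the shape $w^{m}w''w^{c}$ and hypothesis~(ii) keeps ${}_1w''$ on the correct side of ${}_1w$; when $w''$ is non-linear and both its occurrences fall between those of $w$, the deletion has the shape $w\,(w'')^{2}\,w$ and hypothesis~(iv) applies; and the ``parallel'' shape $w^{m}(w'')^{c}$ is governed by hypothesis~(iii). Transporting the conclusion forward through $\Theta$ then yields the required order in $\Theta({\bf v})$, while the same two-variable deletions, read through~(iii) and~(iv), give that $x_1$ and $y_1$ remain two-occurring. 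With Condition~(III) in hand, Lemma~\ref{nfblemma1} delivers that $S$ is non-finitely based.

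I expect two points to be the real obstacle. The first is keeping the multiplicity of $x_1$ equal to two in $\Theta({\bf v})$, which is what makes the second occurrence ${}_2x_1$ a meaningful marker; this must be extracted from hypotheses~(iii) and~(iv) applied to the appropriate deletions, since these hypotheses by themselves only constrain the \emph{shape} of an equivalent word and not a priori its length. The second, and genuinely delicate, point is the residual configuration in which the pulled-back variables $w,w''$ are both non-linear and interleave as $w\,w''\,w\,w''$: this pattern is not directly covered by~(iii) or~(iv), and I would dispose of it by exploiting the rigid first-and-last scaffolding furnished by~(iv) together with the bound of fewer than $n/4$ variables, inserting a suitable linear marker so as to reduce the order question to an instance of hypothesis~(ii).
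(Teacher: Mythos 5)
Your high-level frame (Lemma \ref{nfblemma1} with the collection $\{l_{{\bf u},{\bf v}}\}$, $k=1$, a pigeonhole on fewer than $n/4$ variables forcing a linear marker, and condition (ii) finishing the job on pairs containing that marker) is the right one, but both obstacles you flag at the end are genuine gaps, and the first one cannot be overcome with your choice of parameters. Putting $\mathfrak X=\{x_1,y_1\}$ obliges you, in Condition (III), to prove that one application of a short identity preserves $occ(x_1)$ and $occ(y_1)$; hypotheses (ii)--(iv) constrain only the \emph{shape} of equivalent words, and an identity such as $xtx\approx xtx^2$ is consistent with (ii) (the pair $\{{_1x},t\}$ keeps its order), and with (iii)--(iv) (substituting $t\mapsto y^d$ yields shapes $x^iy^jx^k$, which (iv) allows), yet it pumps the multiplicity of $x_1$ when applied inside ${\bf U}$. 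The paper's proof is engineered precisely to avoid this: it takes $\mathfrak X=\emptyset$ and tracks only the two-element set $X=\{{_{1{\bf U}_n}x}_2,\,{_{1{\bf U}_n}y}_2\}$ of \emph{first} occurrences, on which $l_{{\bf U},{\bf V}}$ is defined by regularity alone, so no multiplicity bookkeeping is ever needed. Note also the index shift: the paper marks $x_2,y_2$, not $x_1,y_1$, because the pairs $(y_1,y_2)$ and $(x_i,x_{i+1})$ occur \emph{nested} in ${\bf U}_n$ (as $y_1y_2^2y_1$, $x_ix_{i+1}^2x_i$) and are therefore rigid by (iv), whereas your pair $(x_1,y_1)$ occurs \emph{interleaved} in ${\bf U}_n$ (as $x_1y_1x_1y_1$) and is protected by nothing -- which is exactly why you are forced to drag ${_2x_1}$ along and then run into the $w\,w''\,w\,w''$ configuration.

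That interleaved configuration is your second gap, and ``inserting a suitable linear marker'' is not available without the structural fact that carries the paper's proof, Claim \ref{L}: for ${\bf U}\in[\![{\bf U}_n]\!]_S$ with ${_{1{\bf U}}x}_2<_{\bf U}{_{1{\bf U}}y}_2$, property (P) (namely ${\bf U}(x_i,x_{i+1})=x_i^mx_{i+1}^px_i^q$, from (iv)) together with Claim \ref{L} yield the chain $({_{1{\bf U}}x}_2)<_{\bf U}({_{1{\bf U}}x}_3)<_{\bf U}\cdots<_{\bf U}({_{1{\bf U}}x}_n)<_{\bf U}({_{1{\bf U}}y}_2)$; only then does the pigeonhole produce a linear variable $t$ of ${\bf u}$ located where it is needed, with $\Theta^{-1}_{\bf u}({_{1{\bf U}}x}_2)\le_{\bf u}({_{\bf u}t})\le_{\bf u}\Theta^{-1}_{\bf u}({_{1{\bf U}}y}_2)$, after which everything reduces to instances of (ii) on the pairs $\{{_1x},t\}$, $\{{_1y},t\}$, transported back to ${\bf V}$ via Fact \ref{thetainv} and the auxiliary first occurrences $x',y'$. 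Your outline has no analogue of Claim \ref{L} (whose proof is the technical heart of the theorem, a case analysis on the pairs $(y_1,y_2)$, $(y_1,x_{i+1})$, $(x_i,x_{i+1})$, $(x_{i-1},x_i)$, $(x_{i-1},y_2)$ using (iii) and (iv)); without it the marker straddling two consecutive first occurrences exists but cannot be located between your marked occurrences, and an identity like $xyxy\approx yxyx$, which conflicts with none of (ii)--(iv) at the level of two-variable projections, destroys the order type of your triple $\{{_1w},{_1w''},{_2w}\}$ in the interleaved case. A smaller inaccuracy of the same kind: in the $w^mw''w^c$ case you cite (ii), but (ii) controls only the pair $\{{_1w},w''\}$ and says nothing about $w''$ versus the last occurrence of $w$, so even that case of your betweenness claim is not covered as cited and would need (iv) via the substitution $w''\mapsto y^2$, which you never invoke.
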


\begin{proof}
First, notice that if ${\bf u} \approx {\bf v}$ is an identity of $S$ then Conditions (iii) and (iv) imply that $\lin ({\bf u}) = \lin ({\bf v})$ and $\non ({\bf u}) = \non ({\bf v})$. In particular, $S$ satisfies only regular identities.
We need the following property of the equivalence class $[\![{\bf U}_n]\!]_S$.

\begin{claim} \label{L} Let $n>2$, $1< i < n$ and ${\bf U} \in [\![{\bf U}_n]\!]_S$.
If ${_{1{\bf U}}x}_i <_{\bf U} {_{1{\bf U}}y}_2$  then ${_{1{\bf U}}x}_{i+1} <_{\bf U} {_{1{\bf U}}y}_2$.
\end{claim}

\begin{proof}
Working toward a contradiction, assume that for some $i>1$ we have ${_1x}_i <_{\bf U} {_1y}_2$  but ${_1y}_2 <_{\bf U} {_1x}_{i+1}$.
Then $({_1x}_i) <_{\bf U} ({_1y}_2) <_{\bf U} ({_1x}_{i+1}) <_{\bf U} ({_{last} x}_{i+1})$.

Where is ${_{last} y}_2$? Since ${\bf U}_n (y_2, x_{i+1}) = (x_{i+1}) (y_2) (x_{i+1}) (y_2)$, in order to avoid a contradiction to Condition (iii), we
conclude that $({_1x}_{i+1}) <_{\bf U}{_{last} y}_2$.

Now assume that  ${_{last} x}_{i+1} <_{\bf U} {_{last} y}_2$.
Since ${\bf U}_n (y_1, y_2) = y_1 y_2 y_2 y_1$, Condition (iv) implies that ${\bf U} (y_1, y_2) = y_1^m y_2^d y_1^c$ for some $m,c>0$ and $d>1$.
So, we have $({_my}_1) <_{\bf U} ({_1y}_2) <_{\bf U} ({_1 x}_{i+1}) <_{\bf U} ({_{last} x}_{i+1}) <_{\bf U} ({_{last} y}_2) <_{\bf U} ({_{m+1} y}_1)$.
This implies that ${\bf U}( y_1, x_{i+1}) = y_1 ^m x_{i+1}^d y_1^c$ for some $m,c>0$ and $d>1$.
Now since ${\bf U}_n (y_1, x_{i+1}) = (x_{i+1}) (y_1) (x_{i+1}) (y_1)$, to avoid a contradiction to Condition (iv) we conclude that ${_{last} y}_2 <_{\bf U}  {_{last} x}_{i+1}$. So, we have that $({_1x}_i) <_{\bf U} ({_1y}_2) <_{\bf U} ({_1x}_{i+1})<_{\bf U} ({_{last} y}_2) <_{\bf U}({_{last} x}_{i+1})$.

 Since ${\bf U}_n (x_i, x_{i+1}) = x_i x_{i+1} x_{i+1} x_i$ to avoid a contradiction to Condition (iv), we must assume that
$({_1x}_i) <_{\bf U} ({_1y}_2) <_{\bf U} ({_1x}_{i+1})<_{\bf U} ({_{last} y}_2) <_{\bf U} ({_{last} x}_{i+1}) <_{\bf U} ({_{last} x}_{i})$. Now since
${\bf U}_n (x_{i-1}, x_{i}) = x_{i-1} x_{i} x_{i} x_{i-1}$, to avoid a contradiction to Condition (iv), we must assume that
${\bf U} (x_{i-1}, y_{2}) = x_{i-1} ^m y_2^d x_{i-1}^c$ for some $m,c>0$ and $d>1$. Now since ${\bf U}_n (x_{i-1}, y_2) = (x_{i-1}) (y_2) (x_{i-1}) (y_2)$,
this gives us a contradiction to Condition (iv) that we can do nothing to avoid. \end{proof}

Now take $n>4$ and consider the identity ${\bf U}_n \approx {\bf V}_n$ and set $X=\{{_{1{\bf U}_n}x}_2, {_{1{\bf U}_n}y}_2\}$.
Evidently, the set $X$ is $l_{{\bf U}_n, {\bf V}_n}$-unstable in ${\bf U}_n \approx {\bf V}_n$. We choose $\mathfrak X$ to be the empty set of variables.

Let us check the third condition of Lemma \ref{nfblemma1}. Let ${\bf U} \in [\![{\bf U}_n]\!]_S$ be a word such that the set $X=\{{_{1{\bf U}_n}x}_2, {_{1{\bf U}_n}y}_2\}$ is $l_{{\bf U}_n, {\bf U}}$-stable in ${\bf U}_n \approx {\bf U}$, i.e. ${_{1{\bf U}}x}_2 <_{\bf U} {_{1{\bf U}}y}_2$.

 Since for each $1<i<n-1$ we have ${\bf U}_n (x_i, x_{i+1})= x_ix_{i+1}^2x_i$,
Condition (iv) implies the following property of the word ${\bf U}$.

(P)  For each $1<i<n-1$ we have ${\bf U}(x_i, x_{i+1}) = x_i^mx_{i+1}^px_i^{q}$ for some $m, q >0$ and $p>1$.

  Property (P) and Claim \ref{L} imply that
$({_{1{\bf U}}x}_2) <_{\bf U} ({_{1{\bf U}}x}_3) <_{\bf U} \dots <_{\bf U} ({_{1{\bf U}}x}_n) <_{\bf U} ({_{1{\bf U}}y}_2)$.

Let ${\bf u}$ be a word in less than $n/2$ variables so that $\Theta({\bf u})={\bf U}$ for some substitution $\Theta: \mathfrak A \rightarrow \mathfrak A^+$.
Since the word ${\bf u}$ has less than $n/2$ variables, for some
$c \in \ocs({\bf u})$ and $2 < i < n-1$ both ${_{1{\bf U}}x}_i$ and ${_{1{\bf U}}x}_{i+1}$ are contained in $\Theta_{\bf u}(c)$.
Then property (P) implies that $c$ must be the only occurrence of a linear variable $t$ in ${\bf u}$.

Since $\Theta^{-1}_{\bf u}$ is a homomorphism from $(\ocs({\bf U}), <_{\bf U})$ to $(\ocs({\bf u}), <_{\bf u})$,
we have that $\Theta^{-1}_{\bf u}({_{1{\bf U}}x}_2) \le_{\bf u} ({_{\bf u}t})  \le_{\bf u} \Theta^{-1}_{\bf u}({_{1{\bf U}}y}_2)$.

Now let ${\bf u} \approx {\bf v}$ be an arbitrary identity of $S$ and ${\bf V} = \Theta({\bf v})$.
In view of Fact \ref{thetainv}, for some $x,x' \in \con({\bf u})$ and $y, y' \in \con({\bf v})$ we have $_{1{\bf u}}x =\Theta^{-1}_{\bf u}({_{1{\bf U}}x}_2)$, $_{1{\bf u}}y = \Theta^{-1}_{\bf u}({_{1{\bf U}}y}_2)$, $_{1{\bf v}}x' =\Theta^{-1}_{\bf v}({_{1{\bf V}}x}_2)$ and $_{1{\bf v}}y' = \Theta^{-1}_{\bf v}({_{1{\bf V}}y}_2)$.
 Then we have $(_{1{\bf u}}x) \le_{\bf u} ({_{\bf u}t})  \le_{\bf u} ({ _{1{\bf u}} y}) \le_{\bf u} ({ _{1{\bf u}} y'})$.

Now in view of Condition (ii) we have
that $({_{1{\bf v}}x'}) \le_{\bf v} ({_{1{\bf v}}x}) \le_{\bf v} ({_{\bf v}t})  \le_{\bf v} ({ _{1{\bf v}} y})$ and $({_{\bf v}t})  \le_{\bf v} ({ _{1{\bf v}} y'})$.

Therefore, $\Theta^{-1}_{\bf v}({_{1{\bf V}}x}_2) = ({_{1{\bf v}}x'}) \le_{\bf v} ({_{\bf v}t})  \le_{\bf v} ({_{1{\bf v}}y'}) = \Theta^{-1}_{\bf v}({_{1{\bf V}}y}_2)$ and, consequently, $({_{1{\bf V}}x}_2) <_{\bf V} ({_{1{\bf V}}y}_2)
$. This means that the set $l_{{\bf U}_n, {\bf U}}(X) = \{{_{1{\bf U}}x}_2, {_{1{\bf U}}y}_2 \}$ is $l_{{\bf U}, {\bf V}}$-stable in ${\bf U} \approx {\bf V}$. Therefore, the monoid $S$ is non-finitely based  by Lemma \ref{nfblemma1}.
\end{proof}

\begin{cor} \label{WT} \cite[Theorem 11]{WTZ} Let $L^1$ denote the monoid obtained by adjoining an identity element to the semigroup $L= \langle a,b \mid aa=a, bb=b, aba=0 \rangle$ of order six. Then $L^1$ is non-finitely based.
\end{cor}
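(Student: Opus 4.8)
The plan is to verify that the monoid $L^1$ satisfies all four hypotheses of Theorem \ref{SL1}, from which the non-finite basis property follows immediately. Since $L^1$ is a finite monoid, the cleanest route is to understand its identities through its element structure: the idempotents $a,b$ and the zero $0$, together with the adjoined identity $1$. The key computational fact is that in $L$ we have $aa=a$, $bb=b$, and $aba=0$, and by left-right symmetry one expects $bab=0$ as well (this should be checked, or follow from the defining relations). These relations govern which words collapse to the same value under every substitution into $L^1$.

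\medskip

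\textbf{Checking the four conditions.} First I would establish Conditions (iii) and (iv), which describe the $\sim_{L^1}$-classes of the two-generated words $x^my^c$ and $x^my^dx^c$. The idea is that since $a,b$ are idempotent, any power $x^i$ with $i\ge 1$ behaves like $x$ under substitution, so $x^my^c \sim_{L^1} x^{m'}y^{c'}$ whenever $m,m',c,c'>1$; one must verify that no word with a different content or a different occurrence-pattern lies in the class, which uses that $x,y$ are isoterm-like for these short words (no ``mixing'' relation applies because the forbidden patterns $aba$, $bab$ require an alternation that $x^my^c$ and $x^my^dx^c$ do not produce across distinct variables). Condition (iv) is the more delicate of the two, since the pattern $x\cdots y\cdots x$ is exactly the alternation that the relation $aba=0$ can act on; the point is that as long as $y$ occurs with multiplicity $d>1$ \emph{between} the blocks of $x$, the word does not collapse to $0$ and retains its block structure $x^iy^jx^k$. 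For Condition (ii), one checks that in any identity $x^mtx^c \approx \mathbf{v}$ of $L^1$ with $t$ linear, the pair $\{{_1x},t\}$ keeps its relative order, i.e. $t$ cannot migrate to the left of the first $x$; this reflects that a linear variable substituted by $1$ or by an idempotent cannot reorder past a non-linear block without changing the value in $L^1$.

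\medskip

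\textbf{The identity (Condition (i)).} The remaining task is to show $L^1$ satisfies $[Xn][Yn][nX][nY] \approx [Yn][Xn][nY][nX]$ for all large $n$. I expect this to reduce to a commutation-type property: under any substitution $\Theta:\mathfrak A \to L^1$, each of the four blocks $[Xn],[Yn],[nX],[nY]$ evaluates to a product of idempotents (or $1$, or $0$), and the overall value depends only on the multiset of variable-images in a way that is insensitive to swapping the $X$-block with the $Y$-block in the manner prescribed. The cleanest verification is to argue by cases on whether any substituted value forces a $0$ (via an $aba$ or $bab$ alternation), and otherwise to show both sides evaluate identically. This is the step I expect to be \textbf{the main obstacle}, because one must rule out every way a long alternating product could produce the fatal pattern $aba=0$ on one side but not the other, and this requires a careful case analysis of which generators the variables $x_1,\dots,x_n,y_1,\dots,y_n$ are sent to.

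\medskip

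Once all four conditions are in hand, Theorem \ref{SL1} applies verbatim and yields that $L^1$ is non-finitely based, completing the proof of the corollary. The overall strategy is thus to do no new non-finite-basis reasoning at all, delegating that entirely to the general machinery of Lemma \ref{nfblemma1} as packaged in Theorem \ref{SL1}, and to spend the work instead on the elementary but case-heavy verification of the identity (i) and the content-class descriptions (iii)--(iv) directly from the three defining relations of $L$.
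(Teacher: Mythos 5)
Your overall architecture coincides with the paper's: the published proof of Corollary \ref{WT} consists precisely of verifying Conditions (i)--(iv) of Theorem \ref{SL1}. However, your sketch rests on a false structural premise about $L$. There is no left-right symmetry here: the relations impose $aba=0$ but emphatically \emph{not} $bab=0$. In fact $bab \ne 0$, and the six elements of $L$ are $a$, $b$, $ab$, $ba$, $bab$ and $0$; if $bab$ were $0$, the semigroup would have only five elements, contradicting the hypothesis that $L$ has order six. This is not a harmless side remark in your write-up, because the asymmetry is exactly what drives the verification of Conditions (ii)--(iv): the paper checks them by the substitutions $x \rightarrow b$, $y \rightarrow a$ (and $t \rightarrow a$), and these checks succeed precisely because $b^{m}a^{d}b^{c}$ evaluates to $bab \ne 0$ while every image with more alternation ($aba$, $abab$, $baba$, $babab$, \dots) collapses to $0$, since any word containing the factor $aba$ is zero. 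For instance, for Condition (ii), if $\mathbf{v}$ had $t$ preceding all occurrences of $x$, then $\mathbf{v}(b,a)$ would lie in $aL^{1}=\{a,ab,0\}$, which does not contain $(x^{m}tx^{c})(b,a)=bab$; under your symmetric relations this discriminating value would itself be $0$ and the check would collapse. Likewise your case analysis for (iii)--(iv), which treats $aba$ and $bab$ as equally forbidden patterns, would misclassify the identities of $L^{1}$ (e.g.\ it cannot distinguish $x^{m}y^{d}x^{c}$ from words with extra alternation, since both would seem to die).

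Second, the step you yourself flag as the main obstacle --- Condition (i), that $L^{1}$ satisfies $[Xn][Yn][nX][nY] \approx [Yn][Xn][nY][nX]$ --- is never actually carried out in your proposal, and a hand case analysis built on the wrong kill-set ($\{aba,\,bab\}$ instead of $\{aba\}$) would be unreliable even if attempted. The paper does no such analysis: it simply cites Lemma 10 of \cite{WTZ}, where Zhang already established these identities for $L^{1}$, and spends its effort only on the three quick substitution checks described above (together with the observation that $(ab)^{2}=0 \ne ab$, which makes the one-variable words isoterms and forces regularity). So you have chosen the right target theorem, but as written the proposal leaves the heaviest hypothesis unproven and sketches the remaining three on an incorrect multiplication table for $L$.
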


\begin{proof} (i) According to Lemma 10 in \cite{WTZ}, the monoid $L^1$ satisfies ${\bf U}_n \approx {\bf V}_n$ for each $n \ge 2$.

(ii) First notice that since $(ab)^2 = 0$ and $ab \ne 0$, the word $t$ is an isoterm for $L^1$. Now Condition (ii)
can be checked easily by substituting $x \rightarrow b$ and $t \rightarrow a$.

Conditions (iii)--(iv) can be checked easily by substituting $x \rightarrow b$ and $y \rightarrow a$. So, by Theorem \ref{SL1}
the monoid $L^1$ is non-finitely based.
\end{proof}

\section{Interrelations between words induced by monoids}

If $W$ and $W'$ are two sets of words then we write $W \preceq W'$ if for any monoid $S$ each word in $W'$ is an isoterm for $S$ whenever  each word in $W$ is an isoterm for $S$. It is easy to see that the relation $\preceq$ is reflexive and transitive, i.e. it is a {\em quasi-order} on sets of words. If $W \preceq W' \preceq W$ then we write $W \sim W'$. The relations $\preceq$ and $\sim$ can be extended to individual words. For example, if $\bf u$ and $\bf v$ are two words then ${\bf u} \sim {\bf v}$ means $\{{\bf u}\} \sim \{{\bf v}\}$. Also, if $W$ is a set of words and $\bf u$ is a word then $W \preceq {\bf u}$  means $W \preceq \{{\bf u}\}$.

For example, it is easy to see that for every $n>1$ we have $xy \preceq x_1x_2 \dots x_n$.
If ${\bf u}$ is a subword of a word ${\bf v}$ then evidently, we have ${\bf v} \preceq {\bf u}$. The following fact is an immediate consequence
of known bases for $S(\{ata\})$ (see Lemma 4.5 in \cite{MJ}, for instance) but we prove it directly.

\begin{fact} \label{xtx}
If $xtx$ is an isoterm for a monoid $S$, then

(i) the words $xt_1yxt_2y$ and $xt_1xyt_2y$ can only
form an identity of $S$ with each other;

(ii) the words $xyt_1xt_2y$ and $yxt_1xt_2y$ can only
form an identity of $S$ with each other;

(iii) the words $xt_1yt_2xy$ and $xt_1yt_2yx$ can only
form an identity of $S$ with each other.
\end{fact}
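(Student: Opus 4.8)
The plan is to treat part (i); fix the word ${\bf p}=xt_1yxt_2y$, suppose $S$ satisfies a nontrivial identity ${\bf p}\approx{\bf w}$, and show ${\bf w}\in\{xt_1yxt_2y,\ xt_1xyt_2y\}$. The basic tool is that every subword of $xtx$ is again an isoterm: since $xtx\preceq x$ and $xtx\preceq xt$, the words $x$ and $xt$ are isoterms for $S$. From $x$ being an isoterm one gets that $S$ satisfies only regular identities, for if $z\in\con({\bf u})\setminus\con({\bf v})$ in an identity ${\bf u}\approx{\bf v}$, substituting every other variable to the empty word yields $z^k\approx 1$ with $k\ge1$, whence $z^{k+1}\approx z$, contradicting that $x$ is an isoterm. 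In particular $\con({\bf w})=\{x,y,t_1,t_2\}$. I will also repeatedly use the obvious fact that $S$ satisfies neither $xtx\approx xxt$ nor $xtx\approx txx$, each being a nontrivial identity with $xtx$ on one side.

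The engine is the following \emph{projection step}: for a two-letter sub-alphabet, substituting all other variables to the empty word turns ${\bf p}\approx{\bf w}$ into ${\bf p}(\mathfrak X)\approx{\bf w}(\mathfrak X)$, and whenever ${\bf p}(\mathfrak X)$ is an isoterm this forces ${\bf w}(\mathfrak X)={\bf p}(\mathfrak X)$. Taking $\mathfrak X=\{x,t_1\}$ gives ${\bf w}(x,t_1)=xt_1x$, so $occ_{\bf w}(x)=2$, $t_1$ is linear, and ${}_1x<_{\bf w}t_1<_{\bf w}{}_2x$; taking $\mathfrak X=\{y,t_2\}$ gives $occ_{\bf w}(y)=2$, $t_2$ linear, ${}_1y<_{\bf w}t_2<_{\bf w}{}_2y$; and taking $\mathfrak X=\{t_1,t_2\}$, since $t_1t_2$ is an isoterm, gives $t_1<_{\bf w}t_2$. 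The key combinatorial observation is that $xt_1yxt_2y$ and $xt_1xyt_2y$ induce \emph{identical} orders on their six occurrences except for the single pair $({}_1y,{}_2x)$: the first word has ${}_1y<{}_2x$, the second has ${}_2x<{}_1y$, and every other pairwise relation coincides. Hence it suffices to force all remaining relations in ${\bf w}$; the order of ${}_1y$ and ${}_2x$ is then the only freedom, and the two choices recover exactly the two admissible words.

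The main obstacle is locating $t_1$ and $t_2$ against the blocks on which the projection is \emph{not} an isoterm. For $\{x,t_2\}$ the projection ${\bf p}(x,t_2)=x^2t_2$ is not an isoterm, so ${\bf w}(x,t_2)$ is only known to be one of $x^2t_2,\,xt_2x,\,t_2x^2$. The middle option would give $S\models xxt\approx xtx$, which is forbidden; the last would put $t_2$ before ${}_1x$, contradicting the already established ${}_1x<_{\bf w}t_1<_{\bf w}t_2$. Hence ${\bf w}(x,t_2)=x^2t_2$, i.e.\ ${}_2x<_{\bf w}t_2$. Symmetrically, ${\bf p}(t_1,y)=t_1y^2$ forces ${\bf w}(t_1,y)=t_1y^2$ (the option $yt_1y$ gives the forbidden $txx\approx xtx$, and $y^2t_1$ contradicts $t_1<_{\bf w}t_2<_{\bf w}{}_2y$), so $t_1<_{\bf w}{}_1y$. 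Collecting everything, ${}_1x$ is the least and ${}_2y$ the greatest occurrence, $t_1$ precedes each of ${}_1y,{}_2x,t_2,{}_2y$, and $t_2$ follows each of ${}_1x,t_1,{}_1y,{}_2x$ and precedes ${}_2y$; the only pair left unordered is $({}_1y,{}_2x)$, both sitting strictly between $t_1$ and $t_2$. Therefore ${\bf w}$ is $xt_1yxt_2y$ or $xt_1xyt_2y$, proving (i). Since these two words have the same projection onto each two-letter sub-alphabet used above, the identical argument applies starting from either one, so neither forms a nontrivial identity with a word outside the pair.

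Parts (ii) and (iii) follow by the same template, the partner words again differing by a single transposition of occurrences: the free pair is $({}_1x,{}_1y)$ in (ii) and $({}_2x,{}_2y)$ in (iii). The required relations are forced by the analogous projections; here more of the projections are themselves isoterms (for instance $xt_2x$ is the $\{x,t_2\}$-projection of $xt_1yt_2xy$ in (iii), and $yt_1y$ is the $\{y,t_1\}$-projection of $xyt_1xt_2y$ in (ii)), so only one projection per part needs the $x^2t$ argument combined with transitivity as above. Thus in every case the single isoterm $xtx$, together with regularity and the two forbidden identities, pins each word down to itself and its stated partner.
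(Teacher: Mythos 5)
Your proof is correct and follows essentially the same route as the paper: the paper's one-line argument pins ${\bf u}(t_1,t_2)=t_1t_2$, ${\bf u}(t_1,t_2,x)=xt_1xt_2$ and ${\bf u}(t_1,t_2,y)=t_1yt_2y$ by deletion and isoterm rigidity, and your two-letter projections together with the forbidden identities $xtx\approx xxt$ and $xtx\approx txx$ are exactly the unpacking of why those three-letter projections are forced. Your version is merely more granular (and explicitly verifies regularity and the symmetric direction, which the paper leaves implicit), so nothing further is needed.
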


\begin{proof} (i) If $S$ satisfies an identity $xt_1yxt_2y \approx {\bf u}$ then since $xtx \preceq xy$ we have ${\bf u}(t_1, t_2)=t_1t_2$.
Since $xtx$ is an isoterm for $S$ we have that ${\bf u}(t_1, t_2, x) = xt_1xt_2$ and ${\bf u}(t_1, t_2, y) = t_1yt_2y$. Therefore, the words $xt_1yxt_2y$ and $xt_1xyt_2y$ can only form an identity of $S$ with each other.

The proofs of Parts (ii) and (iii) are similar. \end{proof}

Since each of the six words considered in Fact \ref{xtx} is less than the word $xtx$ in the order $\preceq$, Fact \ref{xtx} immediately implies the following statement that will be often used without reference.

\begin{fact} \label{xy3}

$xt_1xyt_2y \sim xt_1yxt_2y$,  $xyt_1xt_2y \sim yxt_1xt_2y$ and  $xt_1yt_2xy \sim xt_1yt_2yx$.

\end{fact}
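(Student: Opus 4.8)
The plan is to observe that each of the three asserted relations has the shape ${\bf u} \sim {\bf u}'$, where $({\bf u}, {\bf u}')$ is exactly one of the three pairs appearing in Fact~\ref{xtx}; for instance $xt_1xyt_2y \sim xt_1yxt_2y$ comes from part~(i), $xyt_1xt_2y \sim yxt_1xt_2y$ from part~(ii), and $xt_1yt_2xy \sim xt_1yt_2yx$ from part~(iii). So it suffices to prove once and for all that whenever $({\bf u}, {\bf u}')$ is such a pair we have ${\bf u} \preceq {\bf u}'$, since the reverse inequality ${\bf u}' \preceq {\bf u}$ then follows by the identical argument with the two members of the pair interchanged (the conclusion of Fact~\ref{xtx}, phrased as ``with each other'', is symmetric in the pair, and both members delete to the same subword).

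To establish ${\bf u} \preceq {\bf u}'$, I would fix a monoid $S$ for which ${\bf u}$ is an isoterm and show that ${\bf u}'$ must then be an isoterm as well. The first step is to notice that deleting from ${\bf u}$ every variable except $x$ and the linear variable $t_1$ leaves the word $xt_1x$, i.e.\ $xtx$ up to renaming the linear variable; in other words $xtx$ is a subword of ${\bf u}$. By the subword property ${\bf v} \preceq {\bf u}$ recorded above (applied with $xtx$ in the role of the subword), this gives ${\bf u} \preceq xtx$, and since ${\bf u}$ is an isoterm for $S$ we conclude that $xtx$ is an isoterm for $S$. This is precisely the hypothesis needed to invoke Fact~\ref{xtx}. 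I would then argue by contradiction: if ${\bf u}'$ were not an isoterm, $S$ would satisfy a nontrivial identity ${\bf u}' \approx {\bf w}$ with ${\bf w} \ne {\bf u}'$; but Fact~\ref{xtx} says the only word ${\bf u}'$ can form an identity with is its partner ${\bf u}$, forcing ${\bf w} = {\bf u}$. Hence $S$ satisfies ${\bf u} \approx {\bf u}'$, which is nontrivial since ${\bf u} \ne {\bf u}'$, contradicting that ${\bf u}$ is an isoterm. Therefore ${\bf u}'$ is an isoterm, which is exactly ${\bf u} \preceq {\bf u}'$.

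The one point I would be careful about, and the only genuine obstacle, is the direction of the order $\preceq$: I must use that isoterm-ness passes \emph{down} from the longer word ${\bf u}$ to its subword $xtx$ (via ${\bf u} \preceq xtx$), rather than the reverse, so that the hypothesis ``$xtx$ is an isoterm'' of Fact~\ref{xtx} is genuinely available. Once that is in hand, the uniqueness-of-partner conclusion of Fact~\ref{xtx} closes the argument mechanically, with no case analysis on which of the three pairs is being treated, since all six words share $xt_1x$ as a subword. Running this argument for the three pairs of Fact~\ref{xtx} yields the three relations and completes the proof.
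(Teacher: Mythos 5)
Your overall route coincides with the paper's own one\--line argument: establish that each of the six words lies below $xtx$ in the order $\preceq$, so that the hypothesis of Fact~\ref{xtx} becomes available, and then let the uniqueness\--of\--partner conclusion of Fact~\ref{xtx} transfer isoterm-ness within each pair. Your closing contradiction step (a nontrivial identity ${\bf u}' \approx {\bf w}$ forces ${\bf w} = {\bf u}$, whence $S$ satisfies ${\bf u} \approx {\bf u}'$, contradicting that ${\bf u}$ is an isoterm) is exactly how the paper passes from Fact~\ref{xtx} to Fact~\ref{xy3}.

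However, the step you yourself single out as the crux --- the justification of ${\bf u} \preceq xtx$ --- contains a genuine error. In this paper ``subword'' means a \emph{factor} (a contiguous block; scattered subwords are defined separately in Section~6), and the recorded property ``${\bf v} \preceq {\bf u}$ when ${\bf u}$ is a subword of ${\bf v}$'' is the factor statement, proved by multiplying an identity by fixed words on both sides and cancelling in the free monoid. But $xtx$ occurs as a factor (up to renaming) in only one word of each pair: in $xt_1xyt_2y$, $yxt_1xt_2y$ and $xt_1yt_2yx$, and \emph{not} in $xt_1yxt_2y$, $xyt_1xt_2y$ or $xt_1yt_2xy$, where $xt_1x$ arises only by deleting variables. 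So your ``identical argument with the two members interchanged'' proves only one inequality of each $\sim$ and fails for the reverse one. Reading ``subword'' as ``word obtained by deletion'' does not repair this, because deletion does not in general yield $\preceq$: in the relatively free monoid of the variety defined by $x^3 \approx x^4$, the word $xt_1xt_2x$ is an isoterm (no factor of it is a cube or a fourth power, so no application of the defining identity touches it), while its deletion $x^3$ is not; hence $xt_1xt_2x \not\preceq x^3$, and the general principle you invoke fails precisely on words of the shape at hand. The claim ${\bf u} \preceq xtx$ is nevertheless true for all six words (the paper asserts these $\preceq$-facts as evident), because here every deleted occurrence is adjacent to a linear variable or to an end of the word: given a nontrivial identity $xtx \approx {\bf w}$ of $S$, substitute a suitable word for the linear letter $t$ and multiply by a suitable word on the right --- for instance, for ${\bf u} = xt_1yxt_2y$ take $t \mapsto t_1y$ and append $t_2y$, obtaining an identity of $S$ with left-hand side ${\bf u}$; a short check then shows it is nontrivial (the one colliding preimage ${\bf w} = xt_1yx$ is excluded by deleting $y$ from $xtx \approx xt_1yx$ to get $xtx \approx xx$ and repeating the substitution), contradicting that ${\bf u}$ is an isoterm. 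With that substitution argument supplied for the three words where $xtx$ is not a factor, your proof goes through and matches the paper's.
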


We say that a set of variables $\mathfrak X$ is {\em stable} in an identity $\bf u \approx \bf v$ if ${\bf u}(\mathfrak X)={\bf v}(\mathfrak X)$. Otherwise, we say that set $\mathfrak X$ is {\em unstable} in $\bf u \approx \bf v$.  We say that a set of variables $\mathfrak X$ is {\em stable} in a word $\bf u$ with respect to $S$ if set $\mathfrak X$ is stable in every identity of $S$ of the form $\bf u \approx \bf v$.
The following fact can be easily verified.

\begin{fact} \label{linst} Let $S$ be a monoid and ${\bf u}$ be a word.

 (i) If the word $x$ is an isoterm for $S$ then $S$ satisfies only regular identities.

 (ii) If the  word $x^m$ is an isoterm for $S$ then every variable $x$ with $occ_{\bf u}(x) \le m$ is stable in ${\bf u}$  with respect to $S$.

(iii) If the word $xy$ is an isoterm for $S$ then the set of all linear variables in  ${\bf u}$ is stable in ${\bf u}$ with respect to $S$.
\end{fact}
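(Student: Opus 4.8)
The plan is to verify the three parts directly from the definitions, treating each as a short observation about what isotermhood of a single short word forces on arbitrary identities of $S$.

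First I would handle part (i). Suppose the word $x$ (a single variable, one occurrence) is an isoterm for $S$, and let ${\bf u} \approx {\bf v}$ be any identity of $S$. I want to show $\con({\bf u}) = \con({\bf v})$, i.e. the identity is regular. Working toward a contradiction, suppose some variable $z$ occurs in ${\bf u}$ but not in ${\bf v}$ (or vice versa). I would apply the substitution $\Theta$ that sends $z \mapsto x$ and every other variable of ${\bf u} \approx {\bf v}$ to the empty word, viewing $S$ as a monoid so that the empty word is available. Then $\Theta({\bf u})$ is a nonempty power of $x$ while $\Theta({\bf v})$ is either empty or a power of $x$ of a different length; in either case $S$ would satisfy a nontrivial identity of the form $x \approx x^k$ with $k \neq 1$ (after collapsing, using that $x$ is an isoterm to see even $x \approx x^k$ is forbidden). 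This contradicts $x$ being an isoterm. Hence every variable of ${\bf u}$ lies in ${\bf v}$ and symmetrically, so $\con({\bf u}) = \con({\bf v})$.

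Next, for part (ii), assume $x^m$ is an isoterm and fix a word ${\bf u}$ with a variable $z$ satisfying $occ_{\bf u}(z) \le m$; I want $z$ stable in every identity ${\bf u} \approx {\bf v}$ of $S$, i.e. $occ_{\bf u}(z) = occ_{\bf v}(z)$. Apply the substitution sending $z \mapsto x$ and all other variables to the empty word. Then $\Theta({\bf u}) = x^{occ_{\bf u}(z)}$ and $\Theta({\bf v}) = x^{occ_{\bf v}(z)}$, and $S$ satisfies $x^{occ_{\bf u}(z)} \approx x^{occ_{\bf v}(z)}$. Since $occ_{\bf u}(z) \le m$ and $x^m$ is an isoterm (so all its subwords $x^j$ with $j \le m$ are likewise isoterms, as subwords of an isoterm are isoterms), the only way this identity is compatible with $S$ is $occ_{\bf u}(z) = occ_{\bf v}(z)$, giving stability. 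Part (iii) is the special case of this reasoning applied to linearity: if $xy$ is an isoterm for $S$, then so is $x$ (as a subword), hence by part (i) $S$ is regular, and a variable $t$ linear in ${\bf u}$ satisfies $occ_{\bf u}(t) = 1$; one checks via the same deletion substitutions together with the two-variable isoterm $xy$ that a linear variable cannot be merged with, split across, or reordered relative to another occurrence, so the restriction ${\bf u}(\lin({\bf u}))$ is forced to be preserved in ${\bf v}$.

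I expect the only mild subtlety to be in part (iii): stability of the \emph{set} $\lin({\bf u})$ means ${\bf u}(\lin({\bf u})) = {\bf v}(\lin({\bf u}))$, which requires not just that each linear variable stays linear but that their relative left-to-right order is preserved. This is exactly where the isoterm $xy$ (rather than merely $x$) is needed: deleting down to any two linear variables $s, t$ and applying the substitution $s \mapsto x$, $t \mapsto y$ (all else to the empty word) turns the identity into one between $xy$ and its image, and since $xy$ is an isoterm the order cannot flip. Combining order-preservation with regularity from part (i) yields ${\bf u}(\lin({\bf u})) = {\bf v}(\lin({\bf u}))$. All three parts are routine once the right single-generator deletion substitutions are chosen, which is why the statement is flagged as easily verified.
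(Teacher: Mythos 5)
Your proposal is correct, and it is exactly the intended verification: the paper states Fact \ref{linst} with only the remark that it ``can be easily verified,'' and the deletion substitutions you use (sending all but one or two variables to the empty word, which is legitimate since $S$ is a monoid) are precisely the standard device the paper itself employs throughout, e.g.\ in the restrictions ${\bf u}(x,y,t)$ and $D_x({\bf U})$. Your handling of the one genuine subtlety in part (iii) --- that stability of the set $\lin({\bf u})$ requires preserving the relative order of the linear variables, which is where the two-variable isoterm $xy$ (and not merely $x$) is needed --- is also the right reduction, since linearity of each $t$ in ${\bf v}$ follows from part (ii) with $m=1$ and the total order is determined by the pairwise comparisons $xy \not\approx yx$.
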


If $X \subseteq \ocs ({\bf u})$ then $\con (X)$ denotes the set of all variables involved in $X$.
We say that a set $X \subseteq \ocs ({\bf u})$ is {\em stable} in ${\bf u}$ with respect to $S$ if $X$ is left-stable in ${\bf u}$ with respect to $S$ and each variable in $\con (X)$ is stable in ${\bf u}$ with respect to $S$. If $\{c,d\} \subseteq X \subseteq \ocs ({\bf u})$, then we say that the pair $\{c,d\}$ is {\em adjacent in $X$} if there is no element $e \in X$
such that $c <_{\bf u} e <_{\bf u} d$. If $X= \ocs ({\bf u})$ then we simply say that the pair $\{c,d\}$ is {\em adjacent in ${\bf u}$}.
We say that a pair of variables $\{x,y\}$ is {\em adjacent in ${\bf u}$} if for some $c,d \in \ocs ({\bf u}(x,y))$ the pair $\{c,d\}$ is adjacent
in ${\bf u}$. The following two facts can be easily verified and will be sometimes used without reference.

\begin{fact} \label{st1} Let $S$ be a monoid and ${\bf u}$ be a word.
The following conditions are equivalent:

(i) ${\bf u}$ is an isoterm for $S$;

(ii) The set $\con ({\bf u})$ is stable in ${\bf u}$ with respect to $S$;

(iii) The set $\ocs ({\bf u})$ is stable in ${\bf u}$ with respect to $S$;

(iv) Each adjacent pair in $\ocs ({\bf u})$ is stable in ${\bf u}$ with respect to $S$;

(v) Each adjacent pair in $\con ({\bf u})$ is stable in ${\bf u}$ with respect to $S$.

\end{fact}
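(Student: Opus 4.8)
The plan is to treat conditions (ii) and (iii) as a single hub statement, show that it is just a repackaging of ``for every identity $\mathbf{u}\approx\mathbf{v}$ of $S$ one has $\mathbf{v}(\con(\mathbf{u}))=\mathbf{u}$'', then attach (iv) and (v) to this hub by easy restriction in one direction and a local-to-global argument in the other, and finally close the loop to the isoterm condition (i) using the fact that $S$ is a \emph{monoid}. So I would prove (i)$\Rightarrow$(ii)$\Leftrightarrow$(iii)$\Rightarrow$(iv),(v), then (iv)$\Rightarrow$(iii), (v)$\Rightarrow$(iii), and lastly (iii)$\Rightarrow$(i).

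For (ii)$\Leftrightarrow$(iii) I would simply unwind the definitions. The set $\con(\mathbf{u})$ is stable in an identity $\mathbf{u}\approx\mathbf{v}$ exactly when $\mathbf{u}=\mathbf{u}(\con(\mathbf{u}))=\mathbf{v}(\con(\mathbf{u}))$, and two words coincide iff they agree in the multiplicity of every letter and in the relative order of all their letters. The first requirement says every variable of $\con(\mathbf{u})$ is stable; the second says $\ocs(\mathbf{u})$ is $l_{\mathbf{u},\mathbf{v}}$-stable (and once the multiplicities agree, $l_{\mathbf{u},\mathbf{v}}$ is automatically defined on all of $\ocs(\mathbf{u})$, since $\con(\mathbf{u})\subseteq\con(\mathbf{v})$). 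Quantifying over all identities of $S$ of the form $\mathbf{u}\approx\mathbf{v}$ turns this into the assertion that $\con(\mathbf{u})$ is stable in $\mathbf{u}$ with respect to $S$ iff $\ocs(\mathbf{u})$ is. The implication (i)$\Rightarrow$(ii) is immediate, because for an isoterm every identity $\mathbf{u}\approx\mathbf{v}$ has $\mathbf{u}=\mathbf{v}$; and (iii)$\Rightarrow$(iv), (iii)$\Rightarrow$(v) are immediate as well, since restricting the $l_{\mathbf{u},\mathbf{v}}$-stability of $\ocs(\mathbf{u})$ to a two-element subset, or projecting the preserved multiplicities and order onto a two-variable alphabet, keeps the property.

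The combinatorial core is (iv)$\Rightarrow$(iii) and (v)$\Rightarrow$(iii). Fix an identity $\mathbf{u}\approx\mathbf{v}$ of $S$. Whenever $\mathbf{u}$ has length at least two, every occurrence has a neighbour and hence lies in an adjacent pair, so stability of all adjacent pairs (of occurrences for (iv), of the associated variables for (v)) forces each variable of $\con(\mathbf{u})$ to be stable, whence $l_{\mathbf{u},\mathbf{v}}$ is defined on all of $\ocs(\mathbf{u})$. To check that it preserves order, take $c<_{\mathbf{u}}d$ in $\ocs(\mathbf{u})$ and connect them by consecutive occurrences $c=e_0<_{\mathbf{u}}e_1<_{\mathbf{u}}\cdots<_{\mathbf{u}}e_m=d$; stability of each adjacent link gives $l_{\mathbf{u},\mathbf{v}}(e_t)<_{\mathbf{v}}l_{\mathbf{u},\mathbf{v}}(e_{t+1})$, and transitivity of $<_{\mathbf{v}}$ yields $l_{\mathbf{u},\mathbf{v}}(c)<_{\mathbf{v}}l_{\mathbf{u},\mathbf{v}}(d)$. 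Thus $\ocs(\mathbf{u})$ is $l_{\mathbf{u},\mathbf{v}}$-stable, which together with stability of all variables is exactly (iii). For (v) the only extra remark is that two consecutive occurrences always belong to an adjacent pair of variables, so (v) does apply to each link of the chain.

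The final and most delicate step is (iii)$\Rightarrow$(i), and this is where the monoid hypothesis is essential; I regard it as the main obstacle. Let $\mathbf{u}\approx\mathbf{v}$ be an identity of $S$. I would first argue it is regular: if $\mathbf{v}$ carried a variable $z\notin\con(\mathbf{u})$, then substituting the identity element of $S$ for every variable other than $z$ would give the identity $z^{k}\approx 1$ with $k=occ_{\mathbf{v}}(z)\ge 1$, hence $S$ would satisfy $x_0\approx x_0^{k+1}$ for any $x_0\in\con(\mathbf{u})$; replacing one occurrence of $x_0$ in $\mathbf{u}$ accordingly produces an identity of $S$ that raises the multiplicity of $x_0$, contradicting the stability of $\con(\mathbf{u})$ contained in (iii). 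So $\con(\mathbf{v})=\con(\mathbf{u})$, and now the characterization of trivial regular identities recorded just after the definition of regularity applies: since (iii) says precisely that $\ocs(\mathbf{u})$ is $l_{\mathbf{u},\mathbf{v}}$-stable and each variable is stable, the identity $\mathbf{u}\approx\mathbf{v}$ is trivial, so $\mathbf{u}$ is an isoterm. The monoid-substitution argument is the one point that genuinely uses the adjoined identity element and that prevents (ii)--(v) from holding merely ``up to extra variables''. I would also handle separately the degenerate case $\mathbf{u}=x$ of a single letter, reading (iv) and (v) with the convention that a lone occurrence counts as a trivial adjacent pair, so that the multiplicity of that letter is still controlled and the equivalence persists.
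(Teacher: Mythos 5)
Your proof is correct, and it cannot be compared against a written argument in the paper because there is none: Fact \ref{st1} is one of the statements the paper declares ``can be easily verified'' and leaves unproved. Your decomposition is the natural one, and every step checks out against the paper's definitions (reading the paper's undefined term ``left-stable'' as ``$l_{{\bf u},{\bf v}}$-stable'', which is clearly the intended meaning): the hub (ii)$\Leftrightarrow$(iii) is indeed a per-identity unwinding of ${\bf u}={\bf v}(\con({\bf u}))$ into preservation of multiplicities plus $l_{{\bf u},{\bf v}}$-order; the chain-of-adjacent-links argument correctly recovers (iii) from (iv) and from (v); and your closing step (iii)$\Rightarrow$(i) correctly isolates the one genuinely non-formal point, namely that the monoid hypothesis kills extra variables in ${\bf v}$ (substituting $1$ for all variables but $z$ yields $z^k\approx 1$, hence $x_0\approx x_0^{k+1}$, contradicting variable stability), after which the paper's remark that a regular identity is trivial iff $\ocs({\bf u})$ is $l_{{\bf u},{\bf v}}$-stable and all variables are stable finishes the proof. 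This monoid step is the same mechanism underlying the paper's Fact \ref{linst}(i). You are also right to flag the degenerate case: taken literally, (iv) and (v) are vacuous for a word with a single occurrence (e.g.\ ${\bf u}=x$ over a nontrivial semilattice monoid satisfying $x\approx x^2$), so the fact as stated needs your convention that a singleton $\{c,d\}$ with $c=d$ counts as an adjacent pair --- a reading the paper's set notation permits and presumably intends; noting this explicitly is a genuine improvement over the paper's silence.
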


\begin{fact} Let ${\bf u} \approx {\bf v}$ be an identity.

(i) If a set $X \subseteq \ocs ({\bf u})$ is stable in ${\bf u} \approx {\bf v}$, then
every subset of $X$ is also stable in ${\bf u} \approx {\bf v}$.

(ii) If a set $\mathfrak X \subseteq \con ({\bf u})$ is stable in ${\bf u} \approx {\bf v}$, then
every subset of $\mathfrak X$ is also stable in ${\bf u} \approx {\bf v}$.

(iii) If a set $\mathfrak X \subseteq \con ({\bf u})$ is stable in ${\bf u} \approx {\bf v}$, then
 $\ocs ({\bf u}(\mathfrak X))$ is also stable in ${\bf u} \approx {\bf v}$.

\end{fact}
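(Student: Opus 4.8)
The plan is to unpack the definition of stability for a set of occurrences and then dispatch each of the three parts by an elementary structural observation about deletion and the maps $l_{{\bf u}, {\bf v}}$. Recall that a set $X \subseteq \ocs({\bf u})$ is stable in ${\bf u} \approx {\bf v}$ precisely when $X$ is $l_{{\bf u}, {\bf v}}$-stable in ${\bf u} \approx {\bf v}$ (so that $l_{{\bf u}, {\bf v}}$ is defined on $X$ and restricts to an order-isomorphism from $(X, <_{\bf u})$ onto $(l_{{\bf u}, {\bf v}}(X), <_{\bf v})$) and, in addition, every variable in $\con(X)$ is stable, i.e.\ has equally many occurrences in ${\bf u}$ and in ${\bf v}$. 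I would state this reading explicitly at the start, since all three claims reduce to how these two ingredients behave under passing to subsets.

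For part (i), I would take $Y \subseteq X$ with $X$ stable. As $l_{{\bf u}, {\bf v}}$ is defined on $X$ it is defined on $Y$, and the restriction to $Y$ of the order-isomorphism $l_{{\bf u}, {\bf v}}|_X$ is again an order-isomorphism of $(Y, <_{\bf u})$ onto its image, so $Y$ is $l_{{\bf u}, {\bf v}}$-stable. Since $\con(Y) \subseteq \con(X)$ and every variable of $\con(X)$ is stable, every variable of $\con(Y)$ is stable, and hence $Y$ is stable. For part (ii), stability of a set of variables $\mathfrak X$ means ${\bf u}(\mathfrak X) = {\bf v}(\mathfrak X)$, and the key point is that deletion composes: if $\mathfrak Y \subseteq \mathfrak X$ then ${\bf w}(\mathfrak Y) = ({\bf w}(\mathfrak X))(\mathfrak Y)$ for every word ${\bf w}$, because deleting all letters outside $\mathfrak Y$ may be performed by first deleting those outside $\mathfrak X$ and then those outside $\mathfrak Y$. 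Applying $(\cdot)(\mathfrak Y)$ to both sides of ${\bf u}(\mathfrak X) = {\bf v}(\mathfrak X)$ then gives ${\bf u}(\mathfrak Y) = {\bf v}(\mathfrak Y)$, so $\mathfrak Y$ is stable.

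Part (iii) is the one that needs the most care. I would first identify $\ocs({\bf u}(\mathfrak X))$ with the subset $X = \{\,{_{i{\bf u}}x} \mid x \in \mathfrak X,\ 1 \le i \le occ_{\bf u}(x)\,\}$ of $\ocs({\bf u})$. From ${\bf u}(\mathfrak X) = {\bf v}(\mathfrak X)$ one reads off that $occ_{\bf u}(x) = occ_{\bf v}(x)$ for every $x \in \mathfrak X = \con(X)$; this settles the variable-stability requirement and simultaneously guarantees that $l_{{\bf u}, {\bf v}}$ is defined on all of $X$, with $l_{{\bf u}, {\bf v}}(X) = \ocs({\bf v}(\mathfrak X))$. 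It then remains to verify that $l_{{\bf u}, {\bf v}}$ restricts to an order-isomorphism on $X$. Since deletion preserves the relative order of the occurrences it keeps, for $x, y \in \mathfrak X$ the relation ${_{i{\bf u}}x} <_{\bf u} {_{j{\bf u}}y}$ holds iff the $i$th $x$ precedes the $j$th $y$ in the word ${\bf u}(\mathfrak X)$; as ${\bf u}(\mathfrak X)$ and ${\bf v}(\mathfrak X)$ are literally the same word, this holds iff ${_{i{\bf v}}x} <_{\bf v} {_{j{\bf v}}y}$, which is exactly order-preservation of $l_{{\bf u}, {\bf v}}$ on $X$ in both directions. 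Hence $X$ is $l_{{\bf u}, {\bf v}}$-stable, and therefore stable.

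The only genuinely delicate point, and the one I would be most careful to state precisely, is the identification of $\ocs({\bf u}(\mathfrak X))$ with a subset of $\ocs({\bf u})$ in part (iii), together with the claim that deletion does not disturb the relative order of the surviving occurrences; once this is pinned down, the mere equality of the deleted words ${\bf u}(\mathfrak X) = {\bf v}(\mathfrak X)$ does all of the remaining work, and parts (i) and (ii) are purely formal.
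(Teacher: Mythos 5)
Your proof is correct. The paper offers no proof of this fact at all (it is introduced with ``The following two facts can be easily verified and will be sometimes used without reference''), and your argument --- restricting the order isomorphism and using $\con(Y)\subseteq\con(X)$ for (i), the composition law ${\bf w}(\mathfrak Y)=({\bf w}(\mathfrak X))(\mathfrak Y)$ for (ii), and for (iii) the identification of $\ocs({\bf u}(\mathfrak X))$ with the subset $\{{_{i{\bf u}}x}\mid x\in\mathfrak X\}$ of $\ocs({\bf u})$ together with the observation that deletion preserves both occurrence indices and relative order, so that ${\bf u}(\mathfrak X)={\bf v}(\mathfrak X)$ forces $occ_{\bf u}(x)=occ_{\bf v}(x)$ and makes $l_{{\bf u},{\bf v}}$ a defined order isomorphism on this set --- is precisely the routine verification the paper intends, based on the correct reading of stability for an occurrence set as $l_{{\bf u},{\bf v}}$-stability plus stability of each variable in $\con(X)$.
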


 We use letter $t$ with or without subscripts to denote linear (1-occurring) variables. If we use letter $t$ several times in a word,  we assume that different occurrences of $t$ represent distinct linear variables. The word $x_1 y_1 x_2 y_2 \dots x_{n}y_n$ is denoted by $[XYn]$. We use ${\bf U}^t$ ($^t{\bf U}$) to denote the word obtained from a word ${\bf U}$ by inserting a linear letter after (before) each occurrence of each variable in ${\bf U}$. For example, $[Zn]^t = z_1t_1z_2t_2 \dots z_nt_n$.

The next three lemmas are needed only to prove Theorem \ref{nfbpairs} which is associated with Table \ref{classes}. We start with a statement related to the first row in Table \ref{classes}.

\begin{lemma} \label{row1} Let $S$ be a monoid such that the word $xyyx$ is an isoterm for $S$.
Let ${\bf u}$ be a word that satisfies the following conditions:

 (i) some variables $x, y \in \con({\bf u})$ occur at most twice in ${\bf u}$;

 (ii) $({_{1{\bf u}}x}) <_{\bf u} ({ _{1{\bf u}} y})$;

(iii) If both variables $x$ and $y$ occur twice in ${\bf u}$ then ${\bf u}$ contains a linear variable $t$ such that
$({_{1{\bf u}}y}) <_{\bf u} ({_{\bf u}t}) <_{\bf u} ({ _{2{\bf u}} y})$  and $({_{2{\bf u}}x}) <_{\bf u} ({_{\bf u}t}) <_{\bf u} ({ _{2{\bf u}} y})$.

Then the set $\{{_{1{\bf u}}x}, {_{1{\bf u}}y} \}$ is stable in ${\bf u}$ with respect to $S$.

\end{lemma}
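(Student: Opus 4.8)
\emph{Overall plan and reduction.} The plan is to reduce the statement to a claim about first occurrences only, and then to settle it by projecting identities of $S$ onto two or three variables and exploiting that every subword of $xyyx$ is an isoterm. Since $xyyx$ is an isoterm for $S$, so is each of its subwords $x$, $xy$, $yy$, $xyy$, $yyx$ (recall ${\bf v}\preceq{\bf u}$ whenever ${\bf u}$ is a subword of ${\bf v}$). By Fact~\ref{linst}(i) the monoid $S$ satisfies only regular identities, and by Fact~\ref{linst}(ii) applied to the isoterm $yy$ every variable occurring at most twice in ${\bf u}$ is stable in ${\bf u}$ with respect to $S$; in particular $x$ and $y$ are stable. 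Thus the variables in $\con(\{{_{1{\bf u}}x},{_{1{\bf u}}y}\})=\{x,y\}$ are already stable, and it remains only to check left-stability, i.e.\ to prove that ${_{1{\bf v}}x}<_{\bf v}{_{1{\bf v}}y}$ in every identity ${\bf u}\approx{\bf v}$ of $S$. I will do this by cases according to the pair $(occ_{\bf u}(x),occ_{\bf u}(y))\in\{1,2\}^2$, each time projecting ${\bf u}\approx{\bf v}$ onto a small set of variables (a legitimate consequence, obtained by sending the remaining variables to the identity element of the monoid) and using stability to know the multiplicities in ${\bf v}$.

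\emph{The cases in which at least one of $x,y$ is linear.} If both are linear, or if $x$ is linear and $y$ doubled, then by (ii) the projection ${\bf u}(x,y)$ equals $xy$ or $xyy$, which is a subword of $xyyx$ and hence an isoterm; so $S\models{\bf u}(x,y)\approx{\bf v}(x,y)$ forces ${\bf v}(x,y)={\bf u}(x,y)$, giving ${_{1{\bf v}}x}<_{\bf v}{_{1{\bf v}}y}$. If $x$ is doubled and $y$ linear, then by (ii) ${\bf u}(x,y)\in\{xxy,xyx\}$ and, by stability, ${\bf v}(x,y)$ is a rearrangement of $xxy$; it suffices to exclude ${\bf v}(x,y)=yxx$. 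For this I use the \emph{swap substitution} $\sigma\colon x\mapsto y,\ y\mapsto x$: applying $\sigma$ to the hypothetical identity ${\bf u}(x,y)\approx yxx$ yields an identity of $S$ whose left-hand side is the isoterm $yyx$ (resp.\ $xyy$) and whose two sides differ, a contradiction.

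\emph{The main case $occ_{\bf u}(x)=occ_{\bf u}(y)=2$.} This is where condition (iii) and its linear anchor $t$ enter, and it is the main obstacle. The first step is to locate $t$ inside ${\bf v}$. By (iii) one has ${\bf u}(x,t)=xxt$ and ${\bf u}(y,t)=yty$; substituting $t\mapsto y$ (resp.\ $t\mapsto x$) and then applying the swap $\sigma$ converts every wrong rearrangement of these projections into an identity contradicting the isoterm $xyy$ or $yyx$, which forces ${\bf v}(x,t)=xxt$ and ${\bf v}(y,t)=yty$. Hence in ${\bf v}$ both occurrences of $x$ and the first occurrence of $y$ precede $t$, while the second occurrence of $y$ follows it, so ${\bf v}(x,y,t)={\bf A}\,t\,y$ with ${\bf A}\in\{xxy,xyx,yxx\}$. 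Deleting $t$ gives ${\bf v}(x,y)={\bf A}y$, whereas by (ii) and (iii) (both $x$'s and ${_{1{\bf u}}y}$ precede $t$, and only ${_{2{\bf u}}y}$ follows it) we have ${\bf u}(x,y)\in\{xxyy,xyxy\}$.

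\emph{Conclusion of the main case.} It now suffices to exclude ${\bf A}=yxx$, i.e.\ ${\bf v}(x,y)=yxxy$. If this held, then applying $\sigma$ to the identity ${\bf u}(x,y)\approx yxxy$ would produce $xyyx\approx yyxx$ or $xyyx\approx yxyx$, contradicting that $xyyx$ is an isoterm. Therefore ${\bf A}\in\{xxy,xyx\}$ and ${_{1{\bf v}}x}<_{\bf v}{_{1{\bf v}}y}$, which completes the left-stability check and the proof. The only genuine difficulty is this last case: condition (iii) is exactly what pins $t$ between the two occurrences of $y$ and after both occurrences of $x$, and this is what allows me to locate $t$ in ${\bf v}$ and thereby reduce the order question to a two-variable comparison that the isoterm $xyyx$ resolves through the $x\leftrightarrow y$ swap.
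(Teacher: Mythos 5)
Your proof is correct and takes essentially the same route as the paper's: the identical case split on $(occ_{\bf u}(x),occ_{\bf u}(y))$, with count-stability from the isoterm $xx$, the one-linear cases settled via the subword isoterms of $xyyx$ (the paper's $xyyx \preceq \{xtx, xxt, txx\}$), and the doubled case via ${\bf u}(x,y,t)\in\{xxyty,\,xyxty\}$. The only difference is that the paper leaves the final step implicit (``in both cases the set is stable''), whereas you correctly flesh it out by locating $t$ in ${\bf v}$ and excluding ${\bf v}(x,y)=yxxy$ through the $x\leftrightarrow y$ swap substitution, which turns the offending identity into a nontrivial identity with $xyyx$ on one side.
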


\begin{proof} Let ${\bf u} \approx {\bf v}$ be an identity of $S$.
Since the word $xx$ is an isoterm for $S$, we have that $occ_{\bf u}(x)= occ_{\bf v}(x)$ and
$occ_{\bf u}(y)= occ_{\bf v}(y)$.

If both $x$ and $y$ are linear in ${\bf u}$ then the set $\{{_{1{\bf u}}x}, {_{1{\bf u}}y} \}$ is stable in ${\bf u} \approx {\bf v}$ by Fact \ref{linst}.
If one variable among $x$ and $y$ is linear in ${\bf u}$, then the set $\{{_{1{\bf u}}x}, {_{1{\bf u}}y} \}$ is stable in ${\bf u} \approx {\bf v}$
because $xyyx \preceq \{xtx, xxt, txx\}$.

If both $x$ and $y$ occur twice in ${\bf u}$ then either  ${\bf u}(x, y, t) = xxyty$ or ${\bf u}(x, y, t) = xyxty$.
In both cases, the set $\{{_{1{\bf u}}x}, {_{1{\bf u}}y} \}$ is stable in ${\bf u} \approx {\bf v}$.
\end{proof}

The next lemma is related to the second row in Table \ref{classes}.

\begin{lemma} \label{row2}

 $\{yxxty, ytxxy\} \preceq z_1 t_1 z_2 t_2 \dots z_n t_n xx z_1 z_2 \dots z_n$.

\end{lemma}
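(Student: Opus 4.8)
The plan is to prove the equivalent implication: if $ytxxy$ is an isoterm for a monoid $S$, then the word $\mathbf{W}:=z_1t_1z_2t_2\cdots z_nt_n\,xx\,z_1z_2\cdots z_n$ is an isoterm for $S$ (the hypothesis that $yxxty$ is an isoterm will not actually be needed). So I would fix an arbitrary identity $\mathbf{W}\approx\mathbf{V}$ of $S$ and argue that $\mathbf{V}=\mathbf{W}$. Since $x$, $xx$, $xy$ and $txx$ are all subwords of the isoterm $ytxxy$, each is itself an isoterm for $S$; by Fact \ref{linst} this already yields that $S$ satisfies only regular identities, that every variable (each occurs at most twice in $\mathbf{W}$) keeps its number of occurrences in $\mathbf{V}$, and that the linear variables $t_1,\dots,t_n$ keep their relative order in $\mathbf{V}$.

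Next I would pin down the left half of $\mathbf{V}$ together with the block $xx$ by deleting variables, i.e. by substituting the identity element for them. For each $k$ the deletion $\mathbf{W}(z_k,t_k,x)$ equals $z_kt_k\,xx\,z_k$, which is literally the isoterm $ytxxy$; hence deleting all other variables forces $\mathbf{V}(z_k,t_k,x)=z_kt_k\,xx\,z_k$, and in particular $\mathbf{V}(z_k,x)=z_k\,xx\,z_k$. Likewise $\mathbf{V}(x,t_j)=t_j\,xx$ for every $j$, as $\mathbf{W}(x,t_j)=t_j\,xx=txx$. Reading these off: every $t_j$ and the first occurrence of every $z_k$ precede the block $xx$, every second occurrence of $z_k$ follows it, and no occurrence of any variable can lie between the two $x$'s (an interposed $t_j$ would violate $\mathbf{V}(x,t_j)=t_j\,xx$, an interposed $z_j$ would violate $\mathbf{V}(z_j,x)=z_j\,xx\,z_j$), so the two $x$'s stay adjacent. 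Combining ``first $z_k$ before $t_k$'' (from the triple at $j=k$) with ``$t_{k-1}$ before first $z_k$'' (from $\mathbf{W}(z_k,t_{k-1})=t_{k-1}z_kz_k=txx$) and the fixed order of the $t$'s sandwiches each first occurrence of $z_k$ strictly between $t_{k-1}$ and $t_k$. Thus the part of $\mathbf{V}$ before $xx$ is exactly $z_1t_1z_2t_2\cdots z_nt_n$, and $\mathbf{V}=z_1t_1\cdots z_nt_n\,xx\,z_{\pi(1)}\cdots z_{\pi(n)}$ for some permutation $\pi$.

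The crux, and the step I expect to be the main obstacle, is to prove $\pi=\mathrm{id}$, i.e. that the second block keeps its order. No deletion of $\mathbf{W}$ controls this: the deletion onto a pair $\{z_a,z_b\}$ reads the alternating word $z_az_bz_az_b$, which has two doubled letters and so is not a subword of $ytxxy$, and the same failure occurs for every deletion that separates the two second occurrences. The resolution is to use a genuinely non-injective substitution instead of a deletion. Assume toward a contradiction that for some $a<b$ the second occurrence of $z_b$ precedes that of $z_a$ in $\mathbf{V}$, and apply to $\mathbf{W}\approx\mathbf{V}$ the monoid substitution $\Phi$ sending $z_a\mapsto y$, $z_b\mapsto x$, $t_a\mapsto t$, and all remaining variables, the original $x$ included, to the empty word. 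Because the left halves of $\mathbf{W}$ and $\mathbf{V}$ coincide, both map to $y\,t\,x$; the old block $xx$ disappears; and the second halves contribute $yx$ on the $\mathbf{W}$ side but $xy$ on the $\mathbf{V}$ side. Hence $\Phi(\mathbf{W})=ytxyx$ and $\Phi(\mathbf{V})=ytxxy$, so $S$ satisfies $ytxxy\approx ytxyx$, contradicting that $ytxxy$ is an isoterm. Therefore $\pi=\mathrm{id}$ and $\mathbf{V}=\mathbf{W}$.

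The key idea that makes the hard step go through is exactly this collapse: merging the ``extra'' second‑block variable $z_b$ onto $x$ rebuilds the doubled block $xx$ and converts the unknown order of two second occurrences into a direct comparison against the isoterm $ytxxy$. Since $\mathbf{W}$ is shown to be an isoterm whenever $ytxxy$ is, the stated inequality $\{yxxty,ytxxy\}\preceq z_1t_1z_2t_2\cdots z_nt_n\,xx\,z_1z_2\cdots z_n$ follows a fortiori.
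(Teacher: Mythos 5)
Your proposal is correct, but it takes a genuinely different route from the paper's. The paper proves Lemma~\ref{row2} by the pair-stability criterion of Fact~\ref{st1}: it checks that every adjacent pair of variables of ${\bf u}= z_1 t_1 \dots z_n t_n xx z_1 \dots z_n$ is stable, and the step you single out as the crux --- the relative order of the two second occurrences of $z_i$ and $z_j$ --- is handled there by a deletion after all, namely the four-variable deletion ${\bf u}(z_i, z_j, t_i, t_n)= z_it_i z_j t_n z_i z_j \sim ytxtyx \sim ytxtxy \succeq ytxxy$, which is an isoterm via Fact~\ref{xy3} (ultimately resting on $xtx$ being an isoterm). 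So your remark that no deletion of $\mathbf{W}$ controls the tail order is accurate only for deletions onto the pair $\{z_a,z_b\}$ alone; retaining two linear variables rescues the deletion route. Your alternative resolution is nonetheless valid: the reconstruction of $\mathbf{V}$ from the deletions onto the patterns $ytxxy$ and $txx$ (using Fact~\ref{linst} for regularity, occurrence counts and the order of the $t_j$) correctly pins $\mathbf{V}=z_1t_1\cdots z_nt_n\,xx\,z_{\pi(1)}\cdots z_{\pi(n)}$, and the collapsing substitution $z_a \mapsto y$, $z_b \mapsto x$, $t_a \mapsto t$, all else $\mapsto 1$ (legitimate, since identities of a monoid are preserved under substitutions with empty values) does yield $\Phi(\mathbf{W})=ytxyx$ and $\Phi(\mathbf{V})=ytxxy$ whenever $\pi$ has an inversion, contradicting the isoterm hypothesis. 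What your approach buys: it bypasses Facts~\ref{xtx} and~\ref{xy3} entirely and proves the sharper statement $\{ytxxy\} \preceq z_1t_1\cdots z_nt_n\,xx\,z_1\cdots z_n$ --- you are right that $yxxty$ is never needed (it appears in the lemma only because Row 2 of Table~\ref{classes} supplies both words as hypotheses) --- at the cost of an explicit reconstruction of $\mathbf{V}$ where the paper simply cites its adjacent-pair machinery.
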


\begin{proof} Let $S$ be a monoid such that the words $yxxty$ and $ytxxy$ are isoterms for $S$.
Denote ${\bf u}= z_1 t z_2 t \dots z_n t xx z_1 z_2 \dots z_n$.

Since $xx$ is an isoterm for $S$ and every variable occurs in ${\bf u}$ at most twice, every variable in $\con({\bf u})$ is stable in ${\bf u}$ with respect to $S$.
Since $yxxty \preceq \{xxt, txx, xtx \}$, each pair of variables that involves a linear variable  is stable in ${\bf u}$ with respect to $S$.
Since for each  $1 \le i< j \le n$ we have ${\bf u}(z_i, z_j, t_i, t_n)= z_it_i z_j t_n z_i z_j \sim ytxtyx \sim ytxtxy \succeq  ytxxy$, for each  $1 \le i < j \le n$ the pair $(z_i, z_j)$ is stable in ${\bf u}$ with respect to $S$.
Since for each  $1 \le i \le n$ we have ${\bf u}(x, z_i, t_i)= z_i t_i xxz_i \sim ytxxy$, for each $1 \le i \le n$ the pair $(x, z_i)$ is stable in ${\bf u}$ with respect to $S$. The rest follows from Fact \ref{st1}.
\end{proof}

 The next lemma is related to the third row in Table \ref{classes}.

\begin{lemma} \label{row3}
 $xtxyty \preceq  [ZPn]^t \hskip .04in [ZQn] [PRn]\hskip .04in ^t[QRn] = $

 $=(z_1 t p_1 t z_2 t p_2 t \dots z_n t p_n t) (z_1 q_1 z_2 q_2 \dots z_n q_n) (p_1 r_1 p_2 r_2 \dots p_n r_n)(t q_1 t r_1 t q_2 t r_2 t \dots q_n t r_n).$
\end{lemma}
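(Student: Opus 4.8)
We must show $xtxyty \preceq {\bf w}$, where ${\bf w}$ is the long word
$$[ZPn]^t \,[ZQn]\,[PRn]\,^t[QRn].$$
By definition of $\preceq$, I would let $S$ be an arbitrary monoid for which $xtxyty$ is an isoterm and prove that ${\bf w}$ is then an isoterm for $S$ as well. By Fact \ref{st1}, it suffices to show that every adjacent pair in $\con({\bf w})$ is stable in ${\bf w}$ with respect to $S$, or more practically, to show that each pair of variables is stable and each individual variable is stable. The strategy mirrors the proofs of Lemmas \ref{row1} and \ref{row2}: reduce every pairwise obligation to an instance of a word that is $\preceq$-below $xtxyty$, so that stability follows from the isoterm hypothesis.

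\textbf{Plan of the reduction.}
First I would record the easy structural facts that $xtxyty$ dominates in the $\preceq$ order. Since $xtxyty \succeq xtx$ (by deleting $y$) and $xtxyty \succeq xtxt \sim$ the relevant two-variable words, the hypothesis forces $xtx$, $xty$-type, and single-variable words to be isoterms; in particular $x$, $xx$, and $xy$ are isoterms for $S$, so by Fact \ref{linst} every linear variable is stable and (since each of $z_i,p_i,q_i,r_i,x$ occurs at most a bounded number of times) every variable is stable in ${\bf w}$ with respect to $S$. That disposes of the content/occurrence-count obligations and of every pair involving a linear variable (the letters $t$). The substance is the mutual order of the non-linear variables $z_i,p_i,q_i,r_i$. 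For each such pair I would compute the deletion ${\bf w}(\cdot,\cdot,\text{chosen }t\text{'s})$ and check that, after discarding superfluous linear letters via Fact \ref{xy3}, the resulting word is $\preceq$-below $xtxyty$. The word ${\bf w}$ is built so that a pair like $(z_i,p_j)$ or $(p_i,r_j)$ deletes to a pattern of the shape $xtxyty$ (two interleaved doubled variables separated by linear letters), and the surrounding $^t$-decorations supply exactly the linear separators needed to realize $xtxyty$ rather than a strictly smaller word. The main case analysis is over the positions of the indices ($i<j$, $i=j$, $i>j$) within each of the six families of pairs $\{z,z\},\{z,p\},\{z,q\},\{z,r\},\{p,q\},\{p,r\},\{q,q\},\{q,r\},\{r,r\},$ and the $x$-versus-others pairs.

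\textbf{Where the difficulty lies.}
The bookkeeping is the whole difficulty: ${\bf w}$ has four index-families and the interleaving differs between the first block $[ZPn]^t$, the middle blocks $[ZQn][PRn]$, and the final block $^t[QRn]$. For a generic pair of variables I expect the induced subword to simplify, after applying $xt_1xyt_2y \sim xt_1yxt_2y$ and its companions from Fact \ref{xy3}, to exactly $xtxyty$ (up to renaming), so stability is immediate from the isoterm hypothesis; this is the intended ``tight'' case and is why the conclusion is an equivalence-inducing bound rather than something weaker. The real work is to verify that no pair deletes to a word strictly above $xtxyty$ in $\preceq$ (which would break the argument) and to handle the boundary indices $i=j$ and the interaction with $x$ (whose two occurrences sit in the $[PRn]$/$^t[QRn]$ region) where the pattern could degenerate. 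I would organize this as a short table: one line per variable-pair family giving the induced word and the Fact \ref{xy3} simplification witnessing it is $\preceq xtxyty$. Once every adjacent pair is shown stable, Fact \ref{st1} yields that ${\bf w}$ is an isoterm for $S$, completing the proof that $xtxyty \preceq {\bf w}$.
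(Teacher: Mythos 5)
Your overall frame matches the paper's: both reduce the claim to stability statements verified against the isoterm hypothesis, handle pairs involving linear letters via $xtx$ (note $xtxyty \succeq xtx$), and settle variable-occurrence counts via $xx$. But your plan diverges at the decisive step, and the divergence is a genuine gap. You propose to verify stability of \emph{every} pair of non-linear variables ($\{z,z\}, \{z,p\}, \{q,q\}$, etc.), expecting each induced two-variable deletion to simplify via Fact \ref{xy3} to $xtxyty$ up to renaming. That expectation is false, and the all-pairs route cannot be completed with the tools you cite. For $i<j$, the pair $\{z_i,z_j\}$ induces (keeping suitable linear letters) $z_i\,t\,z_j\,t\,z_iz_j$, which has the shape $xt_1yt_2xy$, and the pair $\{q_i,q_j\}$ induces $q_iq_j\,t\,q_i\,t\,q_j$, of the shape $xyt_1xt_2y$. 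Fact \ref{xy3} relates $xt_1yt_2xy$ only to $xt_1yt_2yx$, and $xyt_1xt_2y$ only to $yxt_1xt_2y$; neither family is $\sim$-reducible to $xtxyty$. Indeed these are exactly the patterns that Row 5 of Table \ref{classes} must assume as \emph{separate} isoterm hypotheses ($xytxy$, $xytyx$, $xtyxty$), while Row 3 assumes only $xtxyty$ --- strong evidence that stability of such pairs is not derivable pair-locally from $xtxyty$ alone. It holds in the long word only as a consequence of the global structure. The paper's proof sidesteps this entirely by invoking Fact \ref{st1}: only \emph{adjacent} pairs need checking, and the word is engineered so that the adjacent non-linear pairs are exactly $(z_i,q_i)$, $(q_i,z_{i+1})$, $(q_n,p_1)$, $(p_i,r_i)$, $(r_i,p_{i+1})$, with deletions $z_itz_iq_itq_i$, $z_{i+1}tq_iz_{i+1}tq_i$, $p_1tq_np_1tq_n$, $p_itp_ir_itr_i$, $p_{i+1}tr_ip_{i+1}tr_i$ --- each of the form $xtxyty$ or $xtyxty \sim xtxyty$ by Fact \ref{xy3}. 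You mention the adjacent-pair criterion and then abandon it as ``more practically'' replaced by the all-pairs table; that replacement is precisely where your argument would fail.

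Two smaller defects. First, the word of Lemma \ref{row3} contains no variable $x$: its non-linear letters are exactly $z_i,p_i,q_i,r_i$, each occurring twice, so your case analysis concerning ``the interaction with $x$ (whose two occurrences sit in the $[PRn]/{}^t[QRn]$ region)'' is vacuous --- you appear to be conflating this frame word with ${\bf U}_n$ of Row 3 in Table \ref{classes}, which does contain $x$ and $y$. Second, with the paper's definition of $\preceq$ (where ${\bf v} \preceq {\bf u}$ means isotermness of ${\bf v}$ forces isotermness of ${\bf u}$, so a word and its deletions satisfy ${\bf v} \preceq {\bf v}(\mathfrak X)$), what you must show for each checked pair is $xtxyty \preceq$ (induced word), i.e., the induced word lies \emph{above} the hypothesis word; your text inverts this reading throughout (``$\preceq$-below $xtxyty$'', ``strictly above \dots would break the argument''). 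The inversion is internally consistent, but in the paper's notation it states the wrong relation.
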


\begin{proof} Let $S$ be a monoid such that the word $xtxyty$ is an isoterm for $S$.
Denote ${\bf u}= [ZPn]^t\hskip .04in [ZQn] [PRn]\hskip .04in {^t[QRn]}$.

Notice that if $x$ is a non-linear variable in ${\bf u}$ then for some linear variable $t$ we have ${\bf u}(x,t)=xtx$.
This implies that each pair of variables that involves a linear variable is stable in ${\bf u}$ with respect to $S$.

Now for each $1 \le i \le n$ we have ${\bf u}(z_i,q_i)= z_i t z_i q_i t q_i$, ${\bf u}(q_i,z_{i+1})= z_{i+1} t q_i z_{i+1} t q_i$,
${\bf u}(q_n,p_{1})= p_{1} t q_n p_{1} t q_n$, ${\bf u}(p_i,r_i)= p_i t p_i r_i t r_i$, ${\bf u}(r_i,p_{i+1})= p_{i+1} t r_i p_{i+1} t r_i$.
In view of Fact \ref{xy3}, all these words are isoterms for $S$.
Since every adjacent pair of variables is stable in ${\bf u}$ with respect to $S$, the word ${\bf u}$ is an isoterm for $S$ by Fact \ref{st1}.
\end{proof}

As we mentioned in the introduction, Table \ref{classes} encodes eight sufficient conditions which are proved in Theorem \ref{nfbpairs} below.
The words $xyyx$ and $xtxyty$ in Rows 1 and 3 were used in \cite{MJ, JS, OS} as parts of different sufficient conditions (see more details at the end of Section 4). Also, as we mentioned in the introduction, the system of identities $\{xx[Yn][nY] \approx [Yn][nY]xx \mid n>1\}$ in Row 1 was used in \cite{Is, Sh}.

\begin{table}[tbh]
\begin{center}
\small
\begin{tabular}{|l|l|l|}
\hline    &   set of words $W$ &identity  ${\bf U}_n \approx {\bf V}_n$ for $n>1$  \\
\hline

\protect\rule{0pt}{10pt}  1 &  $xyyx$  & $xx[Yn][nY] \approx [Yn][nY]xx$\\
\hline

\protect\rule{0pt}{10pt}  2 &  $yxxty$, $ytxxy$&$[Zn]^t \hskip .04in yxx[Zn]y \approx [Zn]^t \hskip .04in xxy[Zn]y$ \\
\hline

\protect\rule{0pt}{10pt}  3 &   $xtxyty$& \parbox{10pt}{$[ZPn]^t \hskip .04in x[ZQn] xy [PRn]y \hskip .04in ^t[QRn] \approx [ZPn]^t \hskip .04in x[ZQn] yx [PRn]y \hskip .04in ^t[QRn]$}\\
\hline

\protect\rule{0pt}{10pt}  4 &  $xxyy$, $xytytx$&  $xytyz_1^2z_2^2 \dots z_n^2x \approx yxtyz_1^2z_2^2 \dots z_n^2x$ \\
\hline

\protect\rule{0pt}{10pt} 5 &  $xtyxty$, $xytxy$, $xytyx$ & $xy[Zn]yxt[nZ]  \approx yx[Zn]xyt[nZ]$\\
\hline

\protect\rule{0pt}{10pt} 6 &  $xtyxty$, $xytxy$, $xytyx$    & $xy[Zn]xyt[nZ] \approx yx[Zn]yxt[nZ]$\\
\hline

\protect\rule{0pt}{10pt}  7 &   $xtxyty$, $xyyx$    & $[Xn][nX][Yn][nY] \approx [Yn][nY][Xn][nX]$\\
\hline

\protect\rule{0pt}{10pt} 8 &  $xxyy$,& $yt_1x^{m-1}yp_1^2 \dots p_n^2zxt_2z \approx$
  \\

\protect\rule{0pt}{10pt} ($m>2$) &  $\{ytyx^dtx^{m-d}, x^{m-d}tx^dyty | 0 <d <m\}$ & $\approx yt_1x^{m}yp_1^2 \dots p_n^2zt_2z$\\
\hline

\end{tabular}
\caption{Eight sufficient conditions under which a monoid is NFB
\protect\rule{0pt}
{11pt}}

\label{classes}
\end{center}
\end{table}

\section{Eight sufficient conditions under which a monoid is non-finitely based}

If $\Theta: \mathfrak A \rightarrow \mathfrak A^+$ is a substitution and $\mathfrak Y$ is a set of variables then we define $\Theta^{-1}(\mathfrak Y):= \{x \in \mathfrak A \mid \con (\Theta(x)) \cap \mathfrak Y \ne \emptyset \}$. For example, if $\Theta(x)=abc$, $\Theta(y)=bab$ and $\Theta(z)=bb$  then $\Theta^{-1}(\{a,c \}) = \{x, y\}$.
If $y$ is a variable then we write $\Theta^{-1}(y)$ instead of $\Theta^{-1}(\{y\})$. (The set of variables $\Theta^{-1}(y)$ is
not to be confused with the function $\Theta^{-1}_{\bf u}$ which is a homomorphism from $(\ocs({\bf U}), <_{\bf U})$ to $(\ocs({\bf u}), <_{\bf u})$).

\begin{lemma} \label{stvar} Let ${\bf u} \approx {\bf v}$ and ${\bf U} \approx {\bf V}$ be two identities such that for some substitution  $\Theta: \mathfrak A \rightarrow \mathfrak A^+$ we have that $\Theta({\bf u})={\bf U}$ and $\Theta({\bf v})={\bf V}$.

Then a set of variables $\mathfrak Y$ is stable in ${\bf U} \approx {\bf V}$ whenever the set $\Theta^{-1}(\mathfrak Y)$ is stable in ${\bf u} \approx {\bf v}$.
\end{lemma}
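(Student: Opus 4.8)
The plan is to reduce the desired equality ${\bf U}(\mathfrak Y) = {\bf V}(\mathfrak Y)$ to the hypothesis ${\bf u}(\mathfrak X) = {\bf v}(\mathfrak X)$, where $\mathfrak X := \Theta^{-1}(\mathfrak Y)$, by exploiting the fact that projecting a word onto $\mathfrak Y$ commutes with the substitution $\Theta$. The crucial observation is that the operation ${\bf w} \mapsto {\bf w}(\mathfrak Y)$ of deleting every occurrence of every variable outside $\mathfrak Y$ is itself a homomorphism of the free monoid: the map $\pi_{\mathfrak Y} : \mathfrak A^* \to \mathfrak A^*$ sending each $y \in \mathfrak Y$ to itself and each other variable to the empty word satisfies $\pi_{\mathfrak Y}({\bf w}) = {\bf w}(\mathfrak Y)$ for every word ${\bf w}$.

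First I would form the composite substitution $\Phi := \pi_{\mathfrak Y} \circ \Theta : \mathfrak A \to \mathfrak A^*$, so that $\Phi(x) = \Theta(x)(\mathfrak Y)$ is the projection of $\Theta(x)$ onto $\mathfrak Y$ for each variable $x$. Since both $\pi_{\mathfrak Y}$ and $\Theta$ are homomorphisms, evaluating the composite on ${\bf u}$ gives ${\bf U}(\mathfrak Y) = \pi_{\mathfrak Y}(\Theta({\bf u})) = \Phi({\bf u})$, and likewise ${\bf V}(\mathfrak Y) = \Phi({\bf v})$. Thus the claim is reduced to proving $\Phi({\bf u}) = \Phi({\bf v})$.

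Next I would use the definition of $\mathfrak X$ to discard the irrelevant variables. By definition of $\Theta^{-1}(\mathfrak Y)$, a variable $x$ lies in $\mathfrak X$ if and only if $\con(\Theta(x)) \cap \mathfrak Y \ne \emptyset$, that is, if and only if $\Phi(x)$ is a nonempty word; every variable outside $\mathfrak X$ is sent by $\Phi$ to the empty word. Consequently, deleting all variables outside $\mathfrak X$ before applying $\Phi$ changes nothing, so $\Phi({\bf u}) = \Phi({\bf u}(\mathfrak X))$ and $\Phi({\bf v}) = \Phi({\bf v}(\mathfrak X))$. Now the hypothesis that $\mathfrak X = \Theta^{-1}(\mathfrak Y)$ is stable in ${\bf u} \approx {\bf v}$ reads ${\bf u}(\mathfrak X) = {\bf v}(\mathfrak X)$; applying the homomorphism $\Phi$ to both sides of this equality of words yields $\Phi({\bf u}) = \Phi({\bf v})$, which is exactly ${\bf U}(\mathfrak Y) = {\bf V}(\mathfrak Y)$, as required.

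The argument is essentially bookkeeping, so I do not anticipate a genuine obstacle; the only points deserving care are the two ``commuting'' facts, which should be verified at the level of words rather than merely asserted. The identity ${\bf U}(\mathfrak Y) = \Phi({\bf u})$ is best checked occurrence-by-occurrence: each occurrence of a variable $x$ in ${\bf u}$ becomes, in ${\bf U} = \Theta({\bf u})$, a block equal to $\Theta(x)$, and projecting that block onto $\mathfrak Y$ leaves precisely $\Theta(x)(\mathfrak Y) = \Phi(x)$ in the same position; since both projection and substitution respect concatenation, the global equality follows. The reduction $\Phi({\bf u}) = \Phi({\bf u}(\mathfrak X))$ is then immediate, since the blocks contributed by the deleted variables are all empty.
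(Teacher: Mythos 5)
Your proof is correct and follows essentially the same route as the paper: your composite substitution $\Phi = \pi_{\mathfrak Y} \circ \Theta$, with $\Phi(x) = \Theta(x)(\mathfrak Y)$, is exactly the paper's auxiliary substitution $\Theta'$, and the chain ${\bf U}(\mathfrak Y) = \Phi({\bf u}) = \Phi({\bf u}(\mathfrak X)) = \Phi({\bf v}(\mathfrak X)) = \Phi({\bf v}) = {\bf V}(\mathfrak Y)$ matches the paper's computation. The only difference is that you verify the two commuting steps explicitly (the paper asserts them), which is a reasonable elaboration rather than a different argument.
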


\begin{proof} Denote $\mathfrak X=\Theta^{-1}(\mathfrak Y)$. Since $\mathfrak X$ is stable in ${\bf u} \approx {\bf v}$, we have that ${\bf u}(\mathfrak X) = {\bf v}(\mathfrak X)$.
Define a substitution $\Theta': \mathfrak A \rightarrow \mathfrak A^*$  by $\Theta'(x) = \Theta(x)(\mathfrak Y)$. Then ${\bf U}(\mathfrak Y)= \Theta'({\bf u}(\mathfrak X)) = \Theta'({\bf v}(\mathfrak X)) = {\bf V}(\mathfrak Y)$. Therefore, the set $\mathfrak Y$ is stable in ${\bf U} \approx {\bf V}$.
\end{proof}

The next statement can be easily verified.

\begin{fact} \label{2occ} Let ${\bf u}$ be a word and $\Theta: \mathfrak A \rightarrow \mathfrak A^+$ be a substitution.
Suppose that some variable $x$ occurs twice in ${\bf U} =\Theta({\bf u})$.

Then either the set $\Theta^{-1}(x)$ contains two variables $t_1$ and $t_2$ such that $occ_{\bf u}(t_1)= occ_{\bf u}(t_2)=1$, $\Theta^{-1}_{\bf u} ({_{1{\bf U}}x}) = {{_{\bf u}}t}_1$ and $\Theta^{-1}_{\bf u} ({_{2{\bf U}}x}) = {_{{\bf u}}t_2}$ or the set $\Theta^{-1}(x)$ contains a variable $x$ such that $occ_{\bf u}(x)=2$,
$\Theta^{-1}_{\bf u} ({_{1{\bf U}}x}) = {_{1{\bf u}}x}$ and $\Theta^{-1}_{\bf u} ({_{2{\bf U}}x}) = {_{2{\bf u}}x}$.

(The first possibility includes the case when $t_1 = t_2 =t$ which occurs when $\Theta(t)$ contains both occurrences of $x$ for some $t \in \con ({\bf u})$.)

\end{fact}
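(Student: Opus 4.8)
The plan is to reduce the statement to a single counting identity for the occurrences of $x$ and then to enumerate the finitely many ways that count can equal $2$. Since $\mathbf{U} = \Theta(\mathbf{u})$ is obtained by replacing each occurrence ${_{j\mathbf{u}}z}$ of each variable $z \in \con(\mathbf{u})$ by a copy of the block $\Theta(z)$, each such block contributes exactly $occ_{\Theta(z)}(x)$ occurrences of $x$ to $\mathbf{U}$. First I would record the bookkeeping identity
$$ occ_{\mathbf{U}}(x) \;=\; \sum_{z \in \con(\mathbf{u})} occ_{\mathbf{u}}(z)\, occ_{\Theta(z)}(x) \;=\; 2, $$
noting that a variable $z$ contributes a nonzero summand precisely when $x \in \con(\Theta(z))$, i.e.\ when $z \in \Theta^{-1}(x)$, and that for each such $z$ both factors $occ_{\mathbf{u}}(z)$ and $occ_{\Theta(z)}(x)$ are at least $1$.

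The heart of the argument is then purely arithmetic: a sum whose terms are products of two positive integers can equal $2$ only if there is a single term equal to $2$ or exactly two terms each equal to $1$. In the first situation a single variable $w \in \Theta^{-1}(x)$ accounts for both occurrences, with $occ_{\mathbf{u}}(w)\, occ_{\Theta(w)}(x) = 2$, forcing the pair $(occ_{\mathbf{u}}(w), occ_{\Theta(w)}(x))$ to be $(2,1)$ or $(1,2)$; in the second, two distinct variables $t_1, t_2 \in \Theta^{-1}(x)$ contribute, each with $occ_{\mathbf{u}}(t_i)\, occ_{\Theta(t_i)}(x) = 1$, so both are linear in $\mathbf{u}$ and each block $\Theta(t_i)$ carries a single $x$. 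I would then match these three subcases against the two alternatives of the statement: the pair $(1,2)$ (one linear $w$ whose block $\Theta(w)$ carries both copies of $x$) is exactly the parenthetical case $t_1 = t_2 = w$ of the first alternative; two distinct linear $t_1, t_2$ give the main part of the first alternative; and the pair $(2,1)$ (a twice-occurring $w$ each of whose two blocks carries one $x$) is the second alternative.

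It remains to compute $\Theta^{-1}_{\mathbf{u}}({_{1\mathbf{U}}x})$ and $\Theta^{-1}_{\mathbf{u}}({_{2\mathbf{U}}x})$ in each subcase and check agreement with the asserted values. Here I would use the fact, recorded just before Fact \ref{thetainv}, that $\Theta^{-1}_{\mathbf{u}}$ is an order-homomorphism from $(\ocs(\mathbf{U}), <_{\mathbf{U}})$ to $(\ocs(\mathbf{u}), <_{\mathbf{u}})$, together with the observation that each set $\Theta_{\mathbf{u}}(d)$ is an interval of $(\ocs(\mathbf{U}), <_{\mathbf{U}})$. When a single linear $w$ supplies both $x$'s, both occurrences lie in the one block $\Theta_{\mathbf{u}}({_{\mathbf{u}}w})$, giving $\Theta^{-1}_{\mathbf{u}}({_{1\mathbf{U}}x}) = \Theta^{-1}_{\mathbf{u}}({_{2\mathbf{U}}x}) = {_{\mathbf{u}}w}$. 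When two distinct blocks are involved (two linear $t_i$, or the two blocks of a twice-occurring $w$), monotonicity of $\Theta^{-1}_{\mathbf{u}}$ forces the earlier occurrence ${_{1\mathbf{U}}x}$ into the earlier block and ${_{2\mathbf{U}}x}$ into the later one, yielding the stated images after labelling $t_1, t_2$ in the order their unique occurrences appear in $\mathbf{u}$.

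I expect the only real obstacle to be this last bookkeeping step: ensuring the two occurrences of $x$ are assigned to the correct blocks and in the correct left-to-right order. Everything else is the elementary factorization of $2$ together with the definition of $\Theta^{-1}(x)$, while the monotonicity of $\Theta^{-1}_{\mathbf{u}}$ is precisely the tool that settles the ordering, so no genuine difficulty should arise.
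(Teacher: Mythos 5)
Your proof is correct. The paper offers no proof of Fact \ref{2occ} at all (it is introduced with ``the next statement can be easily verified''), and your argument --- the counting identity $occ_{\bf U}(x)=\sum_{z\in\con({\bf u})}occ_{\bf u}(z)\,occ_{\Theta(z)}(x)=2$, the factorization of $2$ into the three subcases $(1,2)$, two linear variables, and $(2,1)$, and the use of the interval structure of the blocks $\Theta_{\bf u}(d)$ together with monotonicity of $\Theta^{-1}_{\bf u}$ to settle the left-to-right assignment --- is precisely the routine verification the author intends, with each subcase correctly matched to the corresponding alternative of the statement (including the parenthetical case $t_1=t_2$).
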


\begin{lemma} \label{samelabels}  Let $S$ be a monoid such that the word $xy$ is an isoterm for $S$. Suppose that $S$ satisfies an identity ${\bf u} \approx {\bf v}$ and there is a substitution $\Theta: \mathfrak A \rightarrow \mathfrak A^+$ and a variable $x$ such that $x$ appears twice in both ${\bf U}=\Theta({\bf u})$ and ${\bf V}=\Theta({\bf v})$. Then  $l_{{\bf u}, {\bf v}}(\Theta^{-1}_{\bf u} ({_{1{\bf U}}x}))=\Theta^{-1}_{\bf v} ({_{1{\bf V}}x})$ and $l_{{\bf u}, {\bf v}}(\Theta^{-1}_{\bf u} ({_{2{\bf U}}x}))=\Theta^{-1}_{\bf v} ({_{2{\bf V}}x})$.

\end{lemma}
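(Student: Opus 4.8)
The plan is to reduce the claim to a short case analysis driven by Fact \ref{2occ}, the two nontrivial inputs being that $S$ satisfies only regular identities and that linear variables keep their order. First I would record the consequences of $xy$ being an isoterm. Since $x$ is a subword of $xy$, the word $x$ is an isoterm for $S$, so by Fact \ref{linst}(i) the monoid $S$ satisfies only regular identities; in particular $\con({\bf u})=\con({\bf v})$. By Fact \ref{linst}(iii) the set $\lin({\bf u})$ is stable in ${\bf u}\approx{\bf v}$, that is, ${\bf u}(\lin({\bf u}))={\bf v}(\lin({\bf u}))$; from this one reads off that $\lin({\bf u})=\lin({\bf v})$ and that any two linear variables occur in the same relative order in ${\bf u}$ and in ${\bf v}$. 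This order-preservation is the only property I will need in the delicate case below.

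Next I would exploit that the way $\Theta$ produces $x$ is intrinsic to $\Theta$ and hence common to ${\bf u}$ and ${\bf v}$. For each variable $y$ put $k_y=occ_{\Theta(y)}(x)$ and let $A=\{y\in\con({\bf u})\mid k_y\ge 1\}$. Since $\con({\bf u})=\con({\bf v})$ and $k_y$ does not depend on the word, $A$ is the same whether read off ${\bf u}$ or ${\bf v}$, and the hypothesis that $x$ occurs exactly twice in each of ${\bf U}$ and ${\bf V}$ becomes $\sum_{y\in A}k_y\,occ_{\bf u}(y)=2=\sum_{y\in A}k_y\,occ_{\bf v}(y)$. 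Applying Fact \ref{2occ} to ${\bf u}$ and to ${\bf v}$ separately, this constraint forces both applications into the same one of three configurations, with the same underlying pre-image variable(s): (a) a single $y_0\in A$ with $k_{y_0}=2$, which is then linear in both words, so $\Theta^{-1}_{\bf u}({_{1{\bf U}}x})=\Theta^{-1}_{\bf u}({_{2{\bf U}}x})={_{\bf u}y_0}$ and likewise on the ${\bf v}$-side; (b) a single $y_0\in A$ with $k_{y_0}=1$ and $occ_{\bf u}(y_0)=occ_{\bf v}(y_0)=2$, so the two pre-images are the first and second occurrences of $y_0$; (c) two distinct linear variables $y_1,y_2\in A$ each with $k_{y_i}=1$. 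In configurations (a) and (b) the two equalities $l_{{\bf u},{\bf v}}(\Theta^{-1}_{\bf u}({_{i{\bf U}}x}))=\Theta^{-1}_{\bf v}({_{i{\bf V}}x})$ are immediate from the definition of $l_{{\bf u},{\bf v}}$, since the occurrence indices involved ($1$, respectively $1$ and $2$) do not exceed $\min(occ_{\bf u}(y_0),occ_{\bf v}(y_0))$.

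The step I expect to be the main obstacle is configuration (c). Here the first occurrence of $x$ in ${\bf U}$ comes from whichever of the blocks $\Theta(y_1),\Theta(y_2)$ appears first, and one must exclude the possibility that $y_1$ and $y_2$ swap order on passing from ${\bf u}$ to ${\bf v}$; were that to happen, the two pre-images would be interchanged and $l_{{\bf u},{\bf v}}$ would carry the first pre-image to the second occurrence on the ${\bf v}$-side. This is resolved precisely by the order-preservation recorded in the first step: as $y_1,y_2\in\lin({\bf u})=\lin({\bf v})$, their relative order agrees in ${\bf u}$ and ${\bf v}$, so if, say, ${_{\bf u}y_1}<_{\bf u}{_{\bf u}y_2}$ then also ${_{\bf v}y_1}<_{\bf v}{_{\bf v}y_2}$; hence $\Theta^{-1}_{\bf u}({_{1{\bf U}}x})={_{\bf u}y_1}$, $\Theta^{-1}_{\bf v}({_{1{\bf V}}x})={_{\bf v}y_1}$, and $l_{{\bf u},{\bf v}}({_{\bf u}y_1})={_{\bf v}y_1}$, with the second occurrence handled symmetrically. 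Assembling the three configurations then yields both asserted equalities.
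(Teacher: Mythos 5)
Your proposal is correct and takes essentially the same route as the paper's own proof: both reduce to the case split of Fact \ref{2occ}, handling the two-linear-preimage case via the $xy$-isoterm hypothesis (the paper phrases this as ${\bf u}(t_1,t_2)={\bf v}(t_1,t_2)$, you as order-preservation coming from the stability of $\lin({\bf u})=\lin({\bf v})$) and the doubly-occurring-preimage case via regularity and occurrence stability. Your explicit $k_y$-counting into three configurations is merely a more systematic rendering of the paper's two cases (with $t_1=t_2$ folded into the first), so there is no substantive difference.
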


\begin{proof} In view of Fact \ref{2occ}, there are only two possibilities for the set $\Theta^{-1}(x)$ and the word ${\bf u}$.

First, suppose that ${\bf u}$ contains two variables $t_1$ and $t_2$ so that  $occ_{\bf u}(t_1)= occ_{\bf u}(t_2)=1$, $\Theta^{-1}_{\bf u} ({_{1{\bf U}}x}) = {{_{\bf u}}t}_1$ and $\Theta^{-1}_{\bf u} ({_{2{\bf U}}x}) = {_{{\bf u}}t_2}$. Since the word $xy$ is an isoterm for $S$ this implies that ${\bf u}(t_1, t_2) = {\bf v}(t_1, t_2)$. Since both $\Theta(t_1)$ and $\Theta(t_2)$ contain $x$, the only possibility for word ${\bf v}$ is that  $\Theta^{-1}_{\bf v} ({_{1{\bf V}}x}) = {{_{\bf v}}t}_1$ and $\Theta^{-1}_{\bf v} ({_{2{\bf V}}x}) = {_{{\bf v}}t_2}$.

Now assume that ${\bf u}$ contains variable $x$ so that $occ_{\bf u}(x)=2$,
$\Theta^{-1}_{\bf u} ({_{1{\bf U}}x}) = {_{1{\bf u}}x}$ and $\Theta^{-1}_{\bf u} ({_{2{\bf U}}x}) = {_{2{\bf u}}x}$.
Since the word $x$ is an isoterm for $S$, we have $occ_{\bf v}(x)=2$.
Since $\Theta(x)$ contains $x$, we have
$\Theta^{-1}_{\bf v} ({_{1{\bf V}}x})= {_{1{\bf v}}x}$ and $\Theta^{-1}_{\bf v} ({_{2{\bf V}}x}) = {_{2{\bf v}}x}$.

 In any case we have $l_{{\bf u}, {\bf v}}(\Theta^{-1}_{\bf u} ({_{1{\bf U}}x}))=\Theta^{-1}_{\bf v} ({_{1{\bf V}}x})$ and $l_{{\bf u}, {\bf v}}(\Theta^{-1}_{\bf u} ({_{2{\bf U}}x}))=\Theta^{-1}_{\bf v} ({_{2{\bf V}}x})$. \end{proof}

The word obtained from a word ${\bf u}$ by deleting all occurrences of all variables in a set $\mathfrak X$ is denoted by $D_{\mathfrak X}({\bf u})$.
If the set $\mathfrak X$ contains only one variable $x$ then we simply write  $D_{x}({\bf u})$.

\begin{theorem} \label{nfbpairs}  Each row in Table \ref{classes} encodes a sufficient condition under which a monoid is non-finitely based.
 More precisely, each row in Table \ref{classes} encodes the following statement: if $S$ is a monoid that satisfies the identity ${\bf U}_n \approx {\bf V}_n$ for each $n>1$ and every word in the set $W$ is an isoterm for $S$ then $S$ is non-finitely based.
\end{theorem}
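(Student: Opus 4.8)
The plan is to invoke Lemma \ref{nfblemma1} once for each of the eight rows, in every case with $k=1$, with the good collection of maps $\{ l_{{\bf u}, {\bf v}} \mid ({\bf u} \approx {\bf v}) \in \mathfrak A^+ \times \mathfrak A^+, \con({\bf u}) \cap \con({\bf v}) \ne \emptyset \}$, and with $\mathfrak X = \emptyset$. Condition (I) is then automatic, since $l_{{\bf u}, {\bf u}}$ is the identity map, and the regular-identity machinery is available because each set $W$ contains a word having $xy$ as a (contiguous) subword, so $xy \preceq $ that word forces $xy$ to be an isoterm and $S$ to satisfy only regular identities by Fact \ref{linst}(i). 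For each row I read the identity ${\bf U}_n \approx {\bf V}_n$ off the table and take $X = X_1$ to be the two-element subset of $\ocs({\bf U}_n)$ whose two occurrences have their $<_{{\bf U}_n}$-order reversed in ${\bf V}_n$; for instance in Row~1, where ${\bf U}_n = xx[Yn][nY]$ and ${\bf V}_n = [Yn][nY]xx$, I take $X = \{ {_{1{\bf U}_n}x}, {_{1{\bf U}_n}y}_1\}$, which satisfies ${_1x} <_{{\bf U}_n} {_1y}_1$ but ${_1y}_1 <_{{\bf V}_n} {_1x}$. With this choice Condition (II) holds by inspection of the table entry, as $l_{{\bf U}_n, {\bf V}_n}$ reverses the order of the two occurrences of $X$.

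The substance is Condition (III), which I would establish by transporting stability across the substitution $\Theta$. Fix ${\bf U} \in [\![{\bf U}_n]\!]_S$ for which $X$ is $l_{{\bf U}_n, {\bf U}}$-stable and write $X' = l_{{\bf U}_n, {\bf U}}(X) \subseteq \ocs({\bf U})$; stability of $X$ pins down enough of the relative order of the occurrences of ${\bf U}$ to locate $X'$ and to record (exactly as Property (P) and Claim \ref{L} do in Theorem \ref{SL1}) that consecutive blocks of ${\bf U}$ sit in the nested pattern forced by the $W$-isoterms, and in particular that the first occurrences of the $n$ principal variables remain in their prescribed order. Given an identity ${\bf u} \approx {\bf v}$ of $S$ in fewer than $n/4$ variables with $\Theta({\bf u}) = {\bf U}$ and ${\bf V} = \Theta({\bf v})$, I pull $X'$ back through the order-homomorphism $\Theta^{-1}_{\bf u}$. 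By Fact \ref{thetainv} first occurrences map to first occurrences, so these $n$ principal first occurrences map monotonically into the fewer than $n/4$ distinct first occurrences of ${\bf u}$; since the monotone preimage of each is an interval, a pigeonhole count forces two neighbouring principal first occurrences into a single block $\Theta_{\bf u}(c)$, and the nested pattern just recorded forces $c$ to be the sole occurrence of a linear variable $t$ of ${\bf u}$. This $t$ is precisely the separating linear variable demanded by the helper lemmas, while Fact \ref{2occ} together with Lemma \ref{samelabels} identify the pulled-back occurrences $\Theta^{-1}_{\bf u}(c')$, $c' \in X'$, with honest occurrences of at-most-twice-occurring variables of ${\bf u}$.

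At this point the local picture of ${\bf u}$ around the pulled-back pair matches the hypotheses of the relevant helper lemma---Lemma \ref{row1} for Row~1, Lemma \ref{row2} for Row~2, Lemma \ref{row3} for Row~3, and analogous elementary deletions to the witness words of $W$ (reduced modulo $\sim$ by Facts \ref{xtx} and \ref{xy3}) for Rows~4--8---so that lemma, which uses only that the words in $W$ are isoterms, yields that the pulled-back pair is stable in ${\bf u} \approx {\bf v}$. Pushing this forward by Lemma \ref{stvar} (or directly by Lemma \ref{samelabels}) returns stability of $X'$ in ${\bf U} \approx {\bf V}$, which is Condition (III), and Lemma \ref{nfblemma1} then delivers the non-finite basis property. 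I expect the genuine difficulty to lie entirely in this step: enumerating the finitely many configurations in which a short ${\bf u}$ can map onto ${\bf U}$ and checking in each that a word of $W$ (or a $\sim$-equivalent) appears as a deletion ${\bf u}(\mathfrak Y)$, so that the appropriate helper lemma applies. The eight rows differ only in which witness word and helper lemma are used and in the bookkeeping forced by their longer identities, so once Row~1 is carried out via Lemma \ref{row1} the remaining rows follow the same template; the margin $n/4$, rather than $n/2$, is what guarantees enough uncollapsed principal blocks on both sides of the witness pair to house the separating linear variable and to keep the framing blocks intact.
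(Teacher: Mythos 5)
Your template reproduces the paper's proof for Rows 1, 2, 4 and 7 (two-element unstable set $X$, $\mathfrak X = \emptyset$, pigeonhole to a separating linear variable, Fact \ref{2occ} and Lemma \ref{samelabels} to pull back and push forward, the relevant helper lemma in between). But the blanket choices ``$X$ is the reversed two-element pair'' and ``$\mathfrak X = \emptyset$ in every case'' are a genuine gap for Rows 3, 5, 6 and 8, and the paper does not make them there. First, $\mathfrak X = \emptyset$ fails: in Rows 3, 5 and 6 the sets $W$ (e.g.\ $W=\{xtxyty\}$) do not force $xx$ to be an isoterm, so a word ${\bf U} \in [\![{\bf U}_n]\!]_S$ need not satisfy $occ_{\bf U}(x)=occ_{\bf U}(y)=2$; yet Fact \ref{2occ} and Lemma \ref{samelabels}, on which your pull-back/push-forward step hinges, require these exact occurrence counts, and the inductive propagation along a derivation ${\bf W}_1 \approx {\bf W}_2 \approx \dots$ can drift through words in which ${_{2}x}$ or ${_{2}y}$ has no well-defined image under $l$. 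The paper takes $\mathfrak X = \{x,y\}$ in Rows 3, 5, 6 and $\mathfrak X = \{x\}$ in Row 8 precisely so that Condition (III)'s hypothesis supplies $occ_{\bf U}(x)=occ_{\bf U}(y)=2$ (resp.\ $occ_{\bf U}(x)=m$), and then it must \emph{prove}, via Claims \ref{r3}, \ref{r5}, \ref{r8} together with Lemma \ref{stvar}, that these counts are re-established after each derivation step --- an obligation your proposal never discharges.

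Second, in those same rows the reversed pair alone is not a propagating invariant, so your appeal to ``an analogue of Property (P) and Claim \ref{L}'' does not materialize. In Row 1 the isoterm $xyyx$ rigidifies the whole word (Claim \ref{L1} pins $D_x({\bf U})$ and the position of ${_{2{\bf U}}x}$ from pair-stability alone), which is why the two-element $X$ suffices there. In Row 3, by contrast, Lemma \ref{row3} only fixes $D_{\{x,y\}}({\bf U})$; knowing additionally just ${_{2{\bf U}}x} <_{\bf U} {_{1{\bf U}}y}$ does not locate ${_{1{\bf U}}x}$, ${_{2{\bf U}}y}$ relative to the frame --- nothing in $W=\{xtxyty\}$ forces, say, ${\bf U}(x,z_1)=z_1xz_1x$ in the absence of a separating linear letter, so the deletions of ${\bf u}$ you need as helper-lemma inputs cannot be identified. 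The paper circumvents this by building the frame into $X$ itself: the eight-element set $\{{_{1}x}, {_{2}z}_1, {_{1}q}_n, {_{2}x}, {_{1}y}, {_{2}p}_1, {_{1}r}_n, {_{2}y}\}$ in Row 3, and six-element sets in Rows 5, 6 and 8, whose stability is assumed in (III)'s hypothesis and re-proved occurrence-by-occurrence in Claims \ref{r31}, \ref{r51}, \ref{r81}. Without enlarging $X$ (or some equivalent device), Condition (III) cannot be verified in those four rows, so your argument as written establishes only Rows 1, 2, 4 and 7.
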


\begin{proof}
{\bf Row 1.} Here $W = \{xyyx\}$ and ${\bf U}_n = xx[Yn][nY] \approx [Yn][nY]xx ={\bf V}_n$. As we mentioned in the introduction, this sufficient condition can be
deduced easily either from the proof in \cite{Is} or from the proof in \cite{Sh}, but we are going to prove it by using Lemma \ref{nfblemma1}.
Let $S$ be a monoid such that the word $xyyx$ is an isoterm for $S$ and $S$ satisfies the identity ${\bf U}_n \approx {\bf V}_n$ for each $n>1$.

\begin{claim} \label{L1} If ${\bf U} \in [\![{\bf U}_n]\!]_S$ then ${\bf U}$ satisfies the following properties:

(P1) $occ_{\bf U}(x)=2$;

(P2) $D_x({\bf U})=[Yn][nY]=y_1y_2 \dots y_{n-1}y_ny_ny_{n-1} \dots y_2y_1$;

(P3) If ${_{1{\bf U}}x} <_{\bf U} {_{1{\bf U}}y}_1$ then ${_{2{\bf U}}x} <_{\bf U} {_{2{\bf U}}y}_n$.

\end{claim}

\begin{proof} Property (P1) follows from the fact that the word $xx$ is an isoterm for $S$ and $occ_{{\bf U}_n}(x)=2$.

Property (P2) follows from the fact that

 $D_x({\bf U}_n)=[Yn][nY]=y_1y_2 \dots y_{n-1}y_ny_ny_{n-1} \dots y_2y_1 \succeq xyyx$.

Let us verify Property (P3).
If ${_1 x} <_{\bf U} {_1y}_1$ but $ {_2y}_n <_{\bf U} {_2 x}$, then in view of Property (P2) we would have ${\bf U}(x,y_n)= xy_ny_nx  \ne  xxy_ny_n = {\bf U}_n(x, y_n)$. Since the word $xy_ny_nx$ is an isoterm for $S$, we must assume that ${\bf U}$ satisfies Property (P3).
\end{proof}

Now take $n>10$ and $X=\{{_{1{\bf U}_n}x}, {_{1{\bf U}_n}y}_1\}$. Evidently, the set $X=\{{_{1{\bf U}_n}x}, {_{1{\bf U}_n}y}_1\}$ is $l_{{\bf U}_n, {\bf V}_n}$-unstable in ${\bf U}_n \approx {\bf V}_n$.  We choose $\mathfrak X$ to be the empty set of variables.

Let us check the third condition of Lemma \ref{nfblemma1}. Let ${\bf U} \in [\![{\bf U}_n]\!]_S$ be a word such that the set $X=\{{_{1{\bf U}_n}x}, {_{1{\bf U}_n}y}_1\}$ is $l_{{\bf U}_n, {\bf U}}$-stable in ${\bf U}_n \approx {\bf U}$, i.e.
${_{1{\bf U}}x} <_{\bf U} {_{1{\bf U}}y}_1$.

 Property (P2) implies that

  ${_{1{\bf U}}x} <_{\bf U} ({_{1{\bf U}}y}_1) <_{\bf U} ({_{2{\bf U}}y}_n) <_{\bf U} \dots <_{\bf U} ({_{2{\bf U}}y}_2) <_{\bf U} ({_{2{\bf U}}y}_1)$.

 Properties (P2) and (P3) imply that

${_{1{\bf U}}x} <_{\bf U} ({_{2{\bf U}}x}) <_{\bf U} ({_{2{\bf U}}y}_n) <_{\bf U} \dots <_{\bf U} ({_{2{\bf U}}y}_2) <_{\bf U} ({_{2{\bf U}}y}_1)$.

Let ${\bf u}$ be a word in less than $n/4$ variables such that $\Theta({\bf u})={\bf U}$ for some substitution $\Theta: \mathfrak A \rightarrow \mathfrak A^+$.
Since the word ${\bf u}$ has less than $n/2$ variables, for some
$c \in \ocs({\bf u})$ and $2 < i < n-1$ both ${_{2{\bf U}}y}_{i+1}$ and ${_{2{\bf U}}y}_{i}$ are contained in $\Theta_{\bf u}(c)$.
Then property (P2) implies that $c$ must be the only occurrence of a linear variable $t$ in ${\bf u}$.

Since $\Theta^{-1}_{\bf u}$ is a homomorphism from $(\ocs({\bf U}), <_{\bf U})$ to $(\ocs({\bf u}), <_{\bf u})$,
we have that

\( \begin{array}{l}
 \Theta^{-1}_{\bf u}({_{1{\bf U}}x})  \le_{\bf u}  \Theta^{-1}_{\bf u}({_{1{\bf U}}y}_1) \le_{\bf u} (_{\bf u}t)  \le_{\bf u} \Theta^{-1}_{\bf u}({_{2{\bf U}}y}_1)
 \end{array} \) and

\( \begin{array}{l}

\Theta^{-1}_{\bf u}({_{1{\bf U}}x}) \le_{\bf u} \Theta^{-1}_{\bf u}({_{2{\bf U}}x}) \le_{\bf u}  (_{\bf u}t) \le_{\bf u} \Theta^{-1}_{\bf u}({_{2{\bf U}}y}_1).
\end{array} \)

Now $\Theta^{-1}_{\bf u}({_{1{\bf U}}x})$ and $\Theta^{-1}_{\bf u}({_{1{\bf U}}y}_1)$ are occurrences of some variables $x$ and $y$ in ${\bf u}$.
By Fact \ref{2occ}, the variables $x$ and $y$ occur at most twice in ${\bf u}$ and if each of them occurs twice in ${\bf u}$ then  $_{1{\bf u}}x = \Theta^{-1}_{\bf u}({_{1{\bf U}}x})$, $_{2{\bf u}}x =\Theta^{-1}_{\bf u}({_{2{\bf U}}x})$, $_{1{\bf u}}y = \Theta^{-1}_{\bf u}({_{1{\bf U}}y}_1)$ and $_{2{\bf u}}y =\Theta^{-1}_{\bf u}({_{2{\bf U}}y}_1)$.

Now let $\bf v$ be an arbitrary word for which $S$ satisfies the identity ${\bf u} \approx {\bf v}$ and ${\bf V} = \Theta({\bf v})$.
Since the word ${\bf u}(x,y,t)$ satisfies all conditions of Lemma \ref{row1}, the set $\{ {_{1{\bf u}}x},  {_{1{\bf u}}y} \}$ is $l_{{\bf u}, {\bf v}}$-stable in
${\bf u} \approx {\bf v}$, i.e. $({_{1{\bf v}}x}) \le_{\bf v}  ({_{1{\bf v}}y})$.
Then Lemma \ref{samelabels} implies that $({_{1{\bf V}}x}) <_{\bf V} ({_{1{\bf V}}y}_1)$.
This means that the set $l_{{\bf U}_n, {\bf U}}(X)=\{{_{1{\bf U}}x}, {_{1{\bf U}}y}_1 \}$ is $l_{{\bf U}, {\bf V}}$-stable in ${\bf U} \approx {\bf V}$. Therefore, the monoid $S$ is non-finitely based  by Lemma \ref{nfblemma1}.

{\bf Row 2.} Here $W = \{yxxty, ytxxy\}$ and \[{\bf U}_n = [Zn]^t \hskip .04in yxx[Zn]y \approx [Zn]^t \hskip .04in xxy[Zn]y = {\bf V}_n.\]
Let $S$ be a monoid such that each word in the set $W$ is an isoterm for $S$ and $S$ satisfies the identity ${\bf U}_n \approx {\bf V}_n$ for each $n>1$.
If ${\bf U} \in [\![{\bf U}_n]\!]_S$ then ${\bf U}$ satisfies the following properties:

(P1) $D_y({\bf U}) = [Zn]^t \hskip .04in xx[Zn]$;

(P2) $occ_{\bf U}(y)=2$ and for each $1 \le i \le n$ we have ${_{2{\bf U}}z_i} <_{\bf U} {_{2{\bf U}}y}$.

Property (P1) follows from the fact that  $D_y({\bf U}_n)= [Zn]^t \hskip .04inxx[Zn] = z_1 t z_2 t \dots z_n t xx z_1 z_2  \dots z_n$ and Lemma \ref{row2}.
Notice that for each $1 \le i \le n$ there is a linear letter in ${\bf U}_n$ such that ${\bf U}_n (z_i, t, y) = z_i ty z_i y$.
Now Property (P2) follows from the fact that the words $\{z_it yy z_i, z_i tz_i, tyy \}$ are isoterms for $S$.

Now take $n>10$ and $X=\{{_{1{\bf U}_n}x}, {_{1{\bf U}_n}y}\}$. Evidently, the set $X=\{{_{1{\bf U}_n}x}, {_{1{\bf U}_n}y}\}$ is $l_{{\bf U}_n, {\bf V}_n}$-unstable in ${\bf U}_n \approx {\bf V}_n$.  We choose $\mathfrak X$ to be the empty set of variables.

Let us check the third condition of Lemma \ref{nfblemma1}. Let ${\bf U} \in [\![{\bf U}_n]\!]_S$ be a word such that the set $X=\{{_{1{\bf U}_n}x}, {_{1{\bf U}_n}y}\}$ is $l_{{\bf U}_n, {\bf U}}$-stable in ${\bf U}_n \approx {\bf U}$, i.e.
${_{1{\bf U}}y} <_{\bf U} {_{1{\bf U}}x}$.

Let ${\bf u}$ be a word in less than $n/4$ variables such that $\Theta({\bf u})={\bf U}$ for some substitution $\Theta: \mathfrak A \rightarrow \mathfrak A^+$.
Since the word ${\bf u}$ has less than $n/2$ variables, for some
$c \in \ocs({\bf u})$ and $2 < i < n-1$ both ${_{2{\bf U}}z}_{i}$ and ${_{2{\bf U}}z}_{i+1}$ are contained in $\Theta_{\bf u}(c)$.
Then Property (P1) implies that $c$ must be the only occurrence of a linear variable $t$ in ${\bf u}$.

Since $\Theta^{-1}_{\bf u}$ is a homomorphism from $(\ocs({\bf U}), <_{\bf U})$ to $(\ocs({\bf u}), <_{\bf u})$ and in view of Property (P2),
we have that

\( \begin{array}{c}
\Theta^{-1}_{\bf u}({_{1{\bf U}}y})  \le_{\bf u}  \Theta^{-1}_{\bf u}({_{1{\bf U}}x}) \le_{\bf u}  \Theta^{-1}_{\bf u}({_{2{\bf U}}x})
\le_{\bf u} (_{\bf u}t)  \le_{\bf u} \Theta^{-1}_{\bf u}({_{2{\bf U}}y}).
\end{array} \)

Now $\Theta^{-1}_{\bf u}({_{1{\bf U}}x})$ and $\Theta^{-1}_{\bf u}({_{1{\bf U}}y})$ are occurrences of some variables $x$ and $y$ in ${\bf u}$.
By Fact \ref{2occ}, variables $x$ and $y$ occur at most twice in ${\bf u}$ and if each of them occurs twice in ${\bf u}$ then  $_{1{\bf u}}x = \Theta^{-1}_{\bf u}({_{1{\bf U}}x})$, $_{2{\bf u}}x =\Theta^{-1}_{\bf u}({_{2{\bf U}}x})$, $_{1{\bf u}}y = \Theta^{-1}_{\bf u}({_{1{\bf U}}y})$ and $_{2{\bf u}}y =\Theta^{-1}_{\bf u}({_{2{\bf U}}y})$.

 Now let $\bf v$ be an arbitrary word for which $S$ satisfies the identity ${\bf u} \approx {\bf v}$ and ${\bf V} = \Theta({\bf v})$.
 If either $x$ or $y$ is linear in ${\bf u}$ then the set $\{{_{1{\bf u}}x},  {_{1{\bf u}}y}\}$ is $l_{{\bf u}, {\bf v}}$-stable in ${\bf u} \approx {\bf v}$ because $yxxty \preceq \{xxt, txx, xtx \}$. If both $x$ and $y$ occur twice in ${\bf u}$ then the set $\{{_{1{\bf u}}x},  {_{1{\bf u}}y}\}$ is $l_{{\bf u}, {\bf v}}$-stable in ${\bf u} \approx {\bf v}$ because ${\bf u}(x,y,t) = yxxty$ is an isoterm for $S$.
 Since the set $\{{_{1{\bf u}}x},  {_{1{\bf u}}y}\}$ is $l_{{\bf u}, {\bf v}}$-stable in ${\bf u} \approx {\bf v}$, we have  $({_{1{\bf v}}y}) \le_{\bf v}  ({_{1{\bf v}}x})$.
Then Lemma \ref{samelabels} implies that $({_{1{\bf V}}y}) <_{\bf V} ({_{1{\bf V}}x})$.
This means that the set $l_{{\bf U}_n, {\bf U}}(X)=\{{_{1{\bf U}}x}, {_{1{\bf U}}y} \}$ is $l_{{\bf U}, {\bf V}}$-stable in ${\bf U} \approx {\bf V}$. Therefore, the monoid $S$ is non-finitely based  by Lemma \ref{nfblemma1}.

{\bf Row 3.}
 Here $W = \{xtxyty\}$ and \[{\bf U}_n = [ZPn]^t \hskip .04in x[ZQn] xy [PRn]y \hskip .04in ^t[QRn] \approx [ZPn]^t \hskip .04in x[ZQn] yx [PRn]y \hskip .04in ^t[QRn] = {\bf V}_n.\]

Let $S$ be a monoid such that the word $xtxyty$ is an isoterm for $S$ and $S$ satisfies the identity ${\bf U}_n \approx {\bf V}_n$ for each $n>1$.
If ${\bf U} \in [\![{\bf U}_n]\!]_S$ then Lemma \ref{row3} implies that ${\bf U}$ satisfies the following property:

(P) $D_{\{x,y\}}({\bf U}) = [ZPn]^t \hskip .04in [ZQn][PRn] \hskip .04in ^t[QRn]$.

Now take $n>10$, $X=\{{_{1{\bf U}_n}x}, {_{2{\bf U}_n}z}_1, {_{1{\bf U}_n}q}_n, {_{2{\bf U}_n}x},
{_{1{\bf U}_n}y}, {_{2{\bf U}_n}p}_1, {_{1{\bf U}_n}r}_n, {_{2{\bf U}_n}y} \}$ and $\mathfrak X = \{x, y\}$.  Evidently, the set $X$ is $l_{{\bf U}_n, {\bf V}_n}$-unstable in ${\bf U}_n \approx {\bf V}_n$.

Let us check the third condition of Lemma \ref{nfblemma1}. Let ${\bf U} \in [\![{\bf U}_n]\!]_S$ be a word with $occ_{\bf U}(x)=occ_{\bf U}(y)=2$ such that
the set $X$ is $l_{{\bf U}_n, {\bf U}}$-stable in ${\bf U}_n \approx {\bf U}$, i.e.
$({_{1{\bf U}}x}) <_{\bf U} ({_{2{\bf U}}z}_1) <_{\bf U} ({_{1{\bf U}}q}_n) <_{\bf U} ({_{2{\bf U}}x}) <_{\bf U}
({_{1{\bf U}}y}) <_{\bf U} ({_{2{\bf U}}p}_1) <_{\bf U} ({_{1{\bf U}}r_n}) <_{\bf U} ({_{2{\bf U}}y})$.

Let ${\bf u}$ be a word in less than $n/4$ variables such that $\Theta({\bf u})={\bf U}$ for some substitution $\Theta: \mathfrak A \rightarrow \mathfrak A^+$.
Since the word ${\bf u}$ has less than $n/2$ variables, for some
$c \in \ocs({\bf u})$ and $2 < i < n-1$ both ${_{2{\bf U}}z}_{i}$ and ${_{2{\bf U}}z}_{i+1}$ are contained in $\Theta_{\bf u}(c)$.
Then Property (P) implies that $c$ must be the only occurrence of a linear variable $t_1$ in ${\bf u}$.
Similarly, the word ${\bf u}$ contains a linear letter $t_2$ such that $\Theta_{\bf u}( {_{\bf u}t}_2)$ contains both ${_{2{\bf U}}r}_{i}$ and ${_{2{\bf U}}r}_{i+1}$.

Since $\Theta^{-1}_{\bf u}$ is a homomorphism from $(\ocs({\bf U}), <_{\bf U})$ to $(\ocs({\bf u}), <_{\bf u})$,
we have that $\Theta^{-1}_{\bf u}({_{1{\bf U}}x}) \le_{\bf u} \Theta^{-1}_{\bf u}({_{2{\bf U}}z}_1) \le_{\bf u} (_{\bf u}t_1) \le_{\bf u} \Theta^{-1}_{\bf u}({_{1{\bf U}}q}_n)  \le_{\bf u}  \Theta^{-1}_{\bf u}({_{2{\bf U}}x}) \le_{\bf u}  \Theta^{-1}_{\bf u}({_{1{\bf U}}y}) \le_{\bf u} \Theta^{-1}_{\bf u}({_{2{\bf U}}p}_1) \le_{\bf u} (_{\bf u}t_2) \le_{\bf u} \Theta^{-1}_{\bf u}({_{1{\bf U}}r}_n)  \le_{\bf u} \Theta^{-1}_{\bf u}({_{2{\bf U}}y})$.

\begin{claim} \label{r3} The word ${\bf u} (\Theta^{-1}(x) \cup \Theta^{-1}(y) \cup \{t_1, t_2\})$
is an isoterm for $S$.
\end{claim}

\begin{proof} In view of Fact \ref{2occ}, there are only two possibilities for the set $\Theta^{-1}(x)$ and two possibilities for the set $\Theta^{-1}(y)$.
If the set  $\Theta^{-1}(x)$ contains a variable $x$ with $occ_{\bf u}(x)=2$ and the set $\Theta^{-1}(y)$ contains a variable $y$ with $occ_{\bf u}(y)=2$ then
${\bf u}(x,y,t_1,t_2) = xt_1xyt_2y$ is an isoterm for $S$. If either the set  $\Theta^{-1}(x)$ or the set $\Theta^{-1}(y)$ contains a variable that is linear in ${\bf u}$ then we have $xtxyty \preceq \{xxt, txx, xtx \} \preceq {\bf u} (\Theta^{-1}(x) \cup \Theta^{-1}(y) \cup \{t_1, t_2\})$.
\end{proof}

Now let $\bf v$ be an arbitrary word for which $S$ satisfies the identity ${\bf u} \approx {\bf v}$ and ${\bf V} = \Theta({\bf v})$.
Claim \ref{r3} implies that the set of variables $\Theta^{-1}(\{x, y\}) = \Theta^{-1}(x) \cup \Theta^{-1}(y)$
is stable in  ${\bf u} \approx {\bf v}$. Consequently, the set $\{x, y\}$ is stable in ${\bf U} \approx {\bf V}$ by Lemma \ref{stvar}.
 In particular, this means that $occ_{\bf V}(x)=occ_{\bf V}(y)=2$, i.e. each variable in $\mathfrak X = \{x, y\}$
is stable in ${\bf U} \approx {\bf V}$.

\begin{claim} \label{r31} The set

 $\{\Theta^{-1}_{\bf u}({_{1{\bf U}}x}), \Theta^{-1}_{\bf u}({_{2{\bf U}}z}_1), \Theta^{-1}_{\bf u}({_{1{\bf U}}q}_n), \Theta^{-1}_{\bf u}({_{2{\bf U}}x}), \Theta^{-1}_{\bf u}({_{1{\bf U}}y}), \Theta^{-1}_{\bf u}({_{2{\bf U}}p}_1), \Theta^{-1}_{\bf u}({_{1{\bf U}}r}_n), \Theta^{-1}_{\bf u}({_{2{\bf U}}y}) \}$ is $l_{{\bf u}, {\bf v}}$-stable in  ${\bf u} \approx {\bf v}$.

\end{claim}

\begin{proof} In view of Claim \ref{r3} and Lemma \ref{row3}, it is only left to prove that the following sets are $l_{{\bf u}, {\bf v}}$-stable in ${\bf u} \approx {\bf v}$:
$\{\Theta^{-1}_{\bf u}({_{1{\bf U}}x}), \Theta^{-1}_{\bf u}({_{2{\bf U}}z}_1)\}$, $\{\Theta^{-1}_{\bf u}({_{1{\bf U}}q}_n), \Theta^{-1}_{\bf u}({_{2{\bf U}}x})\}$,
$\{\Theta^{-1}_{\bf u}({_{1{\bf U}}y}) , \Theta^{-1}_{\bf u}({_{2{\bf U}}p}_1)\}$, $\{\Theta^{-1}_{\bf u}({_{1{\bf U}}r}_n) , \Theta^{-1}_{\bf u}({_{2{\bf U}}y})\}$.

We only show that the set $\{ \Theta^{-1}_{\bf u}({_{1{\bf U}}x}),
\Theta^{-1}_{\bf u}({_{2{\bf U}}z}_1)\}$ is $l_{{\bf u}, {\bf v}}$-stable in ${\bf u} \approx {\bf v}$. (Proofs for the other three sets are similar.)
Indeed, $\Theta^{-1}_{\bf u}({_{1{\bf U}}x})$ and $\Theta^{-1}_{\bf u}({_{2{\bf U}}z}_1)$ are occurrences of some variables $x$ and $z$ in ${\bf u}$.
By Fact \ref{2occ}, the variables $x$ and $z$ occur at most twice in ${\bf u}$ and if each of them occurs twice in ${\bf u}$ then  $_{1{\bf u}}x = \Theta^{-1}_{\bf u}({_{1{\bf U}}x})$, $_{2{\bf u}}x =\Theta^{-1}_{\bf u}({_{2{\bf U}}x})$, $_{1{\bf u}}z = \Theta^{-1}_{\bf u}({_{1{\bf U}}z}_1)$ and $_{2{\bf u}}z =\Theta^{-1}_{\bf u}({_{2{\bf U}}z}_1)$. If either $x$ or $z$ is linear in ${\bf u}$ then the set $\{{_{\bf u}x},  {_{2{\bf u}}z}\}$ or $\{{_{1{\bf u}}x},  {_{\bf u}z}\}$ is $l_{{\bf u}, {\bf v}}$-stable in ${\bf u} \approx {\bf v}$ because $xtxyty \preceq \{xxt, txx, xtx \}$. If both $x$ and $z$ occur twice in ${\bf u}$ then the set $\{{_{1{\bf u}}x},  {_{2{\bf u}}z}\}$ is $l_{{\bf u}, {\bf v}}$-stable in ${\bf u} \approx {\bf v}$ because ${\bf u}(x,z,t) = ztxzt_1x$ is an isoterm for $S$.
\end{proof}

Claim \ref{r31} and  Lemma \ref{samelabels} imply that the set

 $l_{{\bf U}_n, {\bf U}}(X)=\{ {_{1{\bf U}}x}, {_{2{\bf U}}z}_1, {_{1{\bf U}}q}_n, {_{2{\bf U}}x},
{_{1{\bf U}}y}, {_{2{\bf U}}p}_1, {_{1{\bf U}}r}_n, {_{2{\bf U}}y} \}$ is $l_{{\bf U}, {\bf V}}$-stable in ${\bf U} \approx {\bf V}$.
 Therefore, the  monoid $S$ is non-finitely based  by Lemma \ref{nfblemma1}.

{\bf Row 4.}  Here $W = \{xxyy, xytytx\}$ and \[ {\bf U}_n = xytyz_1^2z_2^2 \dots z_n^2x \approx yxtyz_1^2z_2^2 \dots z_n^2x = {\bf V}_n. \]
Let $S$ be a monoid such that each word in $W$ is an isoterm for $S$ and $S$ satisfies the identity ${\bf U}_n \approx {\bf V}_n$ for each $n>1$.

\begin{claim} \label{row4}
If ${\bf U} \in [\![{\bf U}_n]\!]_S$ then ${\bf U}$ satisfies the following properties:

(P1) $occ_{\bf U}(x)=2$;

(P2) $D_x({\bf U}) = yty z_1^2 \dots z_n^2$;

(P3)  ${_{1{\bf U}}x} <_{\bf U} {_{\bf U}t}$;

(P4) ${_{1{\bf U}}z_n} <_{\bf U} {_{2{\bf U}}x}$.
\end{claim}

\begin{proof} Property (P1) follows from the fact that $xx$ is an isoterm for $S$. Property (P2) follows from the fact that  $D_x({\bf U}_n)= yty z_1^2 \dots z_n^2  \succeq xxyy$. Property (P3) follows from the fact that $xtx$ is an isoterm for $S$. If ${_{2{\bf U}}x} <_{\bf U} {_{1{\bf U}}z_n}$ then the monoid $S$ would satisfy
the identity ${\bf U}(x,t,z_n)=xtxz_nz_n \approx xtz_nz_nx = {\bf U}_n(x,t, z_n)$. Since this identity is false in $S$, Property (P4) is verified.
\end{proof}

Now take $n>10$ and $X= \{( {_{1{\bf U}_n}x}, {_{1{\bf U}_n}y})\}$. Evidently, the set $X= \{( {_{1{\bf U}_n}x}, {_{1{\bf U}_n}y})\}$ is $l_{{\bf U}_n, {\bf V}_n}$-unstable in ${\bf U}_n \approx {\bf V}_n$. We choose $\mathfrak X$ to be the empty set of variables.

Let us check the third condition of Lemma \ref{nfblemma1}. Let ${\bf U} \in [\![{\bf U}_n]\!]_S$ be a word such that the set $X=\{{_{1{\bf U}_n}x}, {_{1{\bf U}_n}y}\}$ is $l_{{\bf U}_n, {\bf U}}$-stable in ${\bf U}_n \approx {\bf U}$, i.e.
${_{1{\bf U}}x} <_{\bf U} {_{1{\bf U}}y}$.

Let ${\bf u}$ be a word in less than $n/4$ variables such that $\Theta({\bf u})={\bf U}$ for some substitution $\Theta: \mathfrak A \rightarrow \mathfrak A^+$.
Since the word ${\bf u}$ has less than $n/2$ variables, for some
$c \in \ocs({\bf u})$ and $2 < i < n-1$ both ${_{1{\bf U}}z}_{i}$ and ${_{1{\bf U}}z}_{i+1}$ are contained in $\Theta_{\bf u}(c)$.
Then Property (P2) implies that $c$ must be the only occurrence of a linear variable $t_1$ in ${\bf u}$.

Since $\Theta^{-1}_{\bf u}$ is a homomorphism from $(\ocs({\bf U}), <_{\bf U})$ to $(\ocs({\bf u}), <_{\bf u})$, Properties (P1)--(P4) of Claim \ref{row4}
imply that $\Theta^{-1}_{\bf u}({_{1{\bf U}}x})  \le_{\bf u}  \Theta^{-1}_{\bf u}({_{1{\bf U}}y}) \le_{\bf u} \Theta^{-1}_{\bf u}({_{2{\bf U}}y})
\le_{\bf u} (_{\bf u}t)  \le_{\bf u} \Theta^{-1}_{\bf u}({_{2{\bf U}}x})$.

Now $\Theta^{-1}_{\bf u}({_{1{\bf U}}x})$ and $\Theta^{-1}_{\bf u}({_{1{\bf U}}y})$ are occurrences of some variables $x$ and $y$ in ${\bf u}$.
By Fact \ref{2occ}, variables $x$ and $y$ occur at most twice in ${\bf u}$ and if each of them occurs twice in ${\bf u}$ then  $_{1{\bf u}}x = \Theta^{-1}_{\bf u}({_{1{\bf U}}x})$, $_{2{\bf u}}x =\Theta^{-1}_{\bf u}({_{2{\bf U}}x})$, $_{1{\bf u}}y = \Theta^{-1}_{\bf u}({_{1{\bf U}}y})$ and $_{2{\bf u}}y =\Theta^{-1}_{\bf u}({_{2{\bf U}}y})$.

 Now let $\bf v$ be an arbitrary word for which $S$ satisfies the identity ${\bf u} \approx {\bf v}$ and ${\bf V} = \Theta({\bf v})$.

 If either $x$ or $y$ is linear in ${\bf u}$ then set $\{{_{1{\bf u}}x},  {_{1{\bf u}}y}\}$ is $l_{{\bf u}, {\bf v}}$-stable in ${\bf u} \approx {\bf v}$ because $xxyy \preceq \{xxt, txx, xtx \}$. If both $x$ and $y$ occur twice in ${\bf u}$ then the set $\{{_{1{\bf u}}x},  {_{1{\bf u}}y}\}$ is $l_{{\bf u}, {\bf v}}$-stable in ${\bf u} \approx {\bf v}$ because
 ${\bf u}(x,y,t,t_1) = xytyt_1x$ is an isoterm for $S$.
 Since the set $\{{_{1{\bf u}}x},  {_{1{\bf u}}y}\}$ is $l_{{\bf u}, {\bf v}}$-stable in ${\bf u} \approx {\bf v}$, we have  $({_{1{\bf v}}x}) \le_{\bf v}  ({_{1{\bf v}}y})$.
Then Lemma \ref{samelabels} implies that $({_{1{\bf V}}y}) <_{\bf V} ({_{1{\bf V}}x})$.
This means that the set $l_{{\bf U}_n, {\bf U}}(X)=\{{_{1{\bf U}}x}, {_{1{\bf U}}y} \}$ is $l_{{\bf U}, {\bf V}}$-stable in ${\bf U} \approx {\bf V}$. Therefore, the monoid $S$ is non-finitely based  by Lemma \ref{nfblemma1}.

{\bf Row 5.}  Here $W = \{ xtyxty, xytxy, xytyx\}$ and \[{\bf U}_n = xy[Zn]yxt[nZ]  \approx yx[Zn]xyt[nZ] = {\bf V}_n.\]
Let $S$ be a monoid such that each word in the set $W$ is an isoterm for $S$ and $S$ satisfies the identity ${\bf U}_n \approx {\bf V}_n$ for each $n>1$.
If ${\bf U} \in [\![{\bf U}_n]\!]_S$ then ${\bf U}$ satisfies the following property:

(P) $D_{x,y}({\bf U}) = [Zn]t[nZ] = z_1 z_2 \dots z_n t z_n \dots z_2 z_1$.

(Property (P) follows from the fact $D_{x,y}({\bf U}_n)= [Zn]t[nZ] \succeq xytyx$.)

Now take $n>10$, $X=\{{_{1{\bf U}_n}x}, {_{1{\bf U}_n}y}, {_{1{\bf U}_n}z_1}, {_{1{\bf U}_n}z_n, {_{2{\bf U}_n}y}, {_{2{\bf U}_n}x}}\}$ and $\mathfrak X = \{x,y\}$.
 Evidently, the set $X$ is $l_{{\bf U}_n, {\bf V}_n}$-unstable in ${\bf U}_n \approx {\bf V}_n$.

Let us check the third condition of Lemma \ref{nfblemma1}. Let ${\bf U} \in [\![{\bf U}_n]\!]_S$ be a word with $occ_{\bf U}(x)=occ_{\bf U}(y)=2$ such that
the set $X$ is $l_{{\bf U}_n, {\bf U}}$-stable in ${\bf U}_n \approx {\bf U}$, i.e.
$({_{1{\bf U}}x}) <_{\bf U} ({_{1{\bf U}}y}) <_{\bf U} ({_{1{\bf U}}z}_1) <_{\bf U} ({_{1{\bf U}}z_n}) <_{\bf U}
({_{2{\bf U}}y}) <_{\bf U} ({_{2{\bf U}}x})$.

Let ${\bf u}$ be a word in less than $n/4$ variables such that $\Theta({\bf u})={\bf U}$ for some substitution $\Theta: \mathfrak A \rightarrow \mathfrak A^+$.
Since the word ${\bf u}$ has less than $n/2$ variables, for some
$c \in \ocs({\bf u})$ and $2 < i < n-1$ both ${_{1{\bf U}}z}_{i}$ and ${_{1{\bf U}}z}_{i+1}$ are contained in $\Theta_{\bf u}(c)$.
Then Property (P2) implies that $c$ must be the only occurrence of a linear variable $t$ in ${\bf u}$.

Since $\Theta^{-1}_{\bf u}$ is a homomorphism from $(\ocs({\bf U}), <_{\bf U})$ to $(\ocs({\bf u}), <_{\bf u})$,
we have that $\Theta^{-1}_{\bf u}({_{1{\bf U}}x})  \le_{\bf u}  \Theta^{-1}_{\bf u}({_{1{\bf U}}y}) \le_{\bf u}  \Theta^{-1}_{\bf u}({_{1{\bf U}}z}_1)
\le_{\bf u} (_{\bf u}t)  \le_{\bf u} \Theta^{-1}_{\bf u}({_{1{\bf U}}z}_n) \le_{\bf u}  \Theta^{-1}_{\bf u}({_{2{\bf U}}y})  \le_{\bf u}  \Theta^{-1}_{\bf u}({_{2{\bf U}}x})$.

\begin{claim} \label{r5} The word ${\bf u} (\Theta^{-1}(x) \cup \Theta^{-1}(y) \cup \{t \})$
is an isoterm for $S$.
\end{claim}

\begin{proof} In view of Fact \ref{2occ}, there are only two possibilities for the set $\Theta^{-1}(x)$ and two possibilities for the set $\Theta^{-1}(y)$.
If the set  $\Theta^{-1}(x)$ contains a variable $x$ with $occ_{\bf u}(x)=2$ and the set $\Theta^{-1}(y)$ contains a variable $y$ with $occ_{\bf u}(x)=2$ then
${\bf u}(x,y,t) = xytyx$ is an isoterm for $S$. If either the set  $\Theta^{-1}(x)$ or the set $\Theta^{-1}(y)$ contains a variable that is linear in ${\bf u}$
then we have $xytyx \preceq {\bf u} (\Theta^{-1}(x) \cup \Theta^{-1}(y) \cup \{t\})$.
\end{proof}

 Now let $\bf v$ be an arbitrary word for which $S$ satisfies the identity ${\bf u} \approx {\bf v}$ and ${\bf V} = \Theta({\bf v})$.
Claim \ref{r5} implies that the set of variables $\Theta^{-1}(\{x, y\}) = \Theta^{-1}(x) \cup \Theta^{-1}(y)$
is stable in  ${\bf u} \approx {\bf v}$. Consequently, the set $\{x, y\}$ is stable in ${\bf U} \approx {\bf V}$ by Lemma \ref{stvar}.
 In particular, this means that $occ_{\bf V}(x)=occ_{\bf V}(y)=2$, i.e. each variable in $\mathfrak X = \{x, y\}$
is stable in ${\bf U} \approx {\bf V}$.

\begin{claim} \label{r51} The set $=\{\Theta^{-1}_{\bf u}({_{1{\bf U}}x}), \Theta^{-1}_{\bf u}({_{1{\bf U}}y}), \Theta^{-1}_{\bf u}({_{1{\bf U}}z}_1), \Theta^{-1}_{\bf u}({_{1{\bf U}}z}_n), \Theta^{-1}_{\bf u}({_{2{\bf U}}y}), \Theta^{-1}_{\bf u}({_{2{\bf U}}x})\}$

is $l_{{\bf u}, {\bf v}}$-stable in  ${\bf u} \approx {\bf v}$.

\end{claim}

\begin{proof} In view of Claim \ref{r5} and the fact that $xytyx \preceq [Zn]t[nZ]$, it is only left to prove that the following sets are $l_{{\bf u}, {\bf v}}$-stable in ${\bf u} \approx {\bf v}$:
$\{\Theta^{-1}_{\bf u}({_{1{\bf U}}y}), \Theta^{-1}_{\bf u}({_{1{\bf U}}z}_1)\}$, $\{\Theta^{-1}_{\bf u}({_{1{\bf U}}z}_n), \Theta^{-1}_{\bf u}({_{2{\bf U}}y})\}$.

Indeed, $\Theta^{-1}_{\bf u}({_{1{\bf U}}y})$ and $\Theta^{-1}_{\bf u}({_{1{\bf U}}z}_1)$ are occurrences of some variables $y$ and $z$ in ${\bf u}$.
By Fact \ref{2occ}, the variables $y$ and $z$ occur at most twice in ${\bf u}$ and if each of them occurs twice in ${\bf u}$ then  $_{1{\bf u}}y = \Theta^{-1}_{\bf u}({_{1{\bf U}}y})$, $_{2{\bf u}}y =\Theta^{-1}_{\bf u}({_{2{\bf U}}y})$, $_{1{\bf u}}z = \Theta^{-1}_{\bf u}({_{1{\bf U}}z}_1)$ and $_{2{\bf u}}z =\Theta^{-1}_{\bf u}({_{2{\bf U}}z}_1)$. If either $y$ or $z$ is linear in ${\bf u}$ then the set $\{{_{1{\bf u}}y},  {_{1{\bf u}}z}\}$ is $l_{{\bf u}, {\bf v}}$-stable in ${\bf u} \approx {\bf v}$ because
$xytyx \preceq xtx$. If both $y$ and $z$ occur twice in ${\bf u}$ then the set $\{{_{1{\bf u}}y},  {_{1{\bf u}}z}\}$ is $l_{{\bf u}, {\bf v}}$-stable in ${\bf u} \approx {\bf v}$ because ${\bf u}(y,z,t) = yztyz$.

Similarly, $\Theta^{-1}_{\bf u}({_{2{\bf U}}y})$ and $\Theta^{-1}_{\bf u}({_{1{\bf U}}z}_n)$ are occurrences of some variables $y$ and $z$ in ${\bf u}$.
By Fact \ref{2occ}, the variables $y$ and $z$ occur at most twice in ${\bf u}$ and if each of them occurs twice in ${\bf u}$ then  $_{1{\bf u}}y = \Theta^{-1}_{\bf u}({_{1{\bf U}}y})$, $_{2{\bf u}}y =\Theta^{-1}_{\bf u}({_{2{\bf U}}y})$, $_{1{\bf u}}z = \Theta^{-1}_{\bf u}({_{1{\bf U}}z}_n)$ and $_{2{\bf u}}z =\Theta^{-1}_{\bf u}({_{2{\bf U}}z}_n)$. If either $y$ or $z$ is linear in ${\bf u}$ then the set $\{{_{2{\bf u}}y},  {_{1{\bf u}}z}\}$ is $l_{{\bf u}, {\bf v}}$-stable in ${\bf u} \approx {\bf v}$ because
$xytyx \preceq xtx$. If both $y$ and $z$ occur twice in ${\bf u}$ then the set $\{{_{2{\bf u}}y},  {_{1{\bf u}}z}\}$ is $l_{{\bf u}, {\bf v}}$-stable in ${\bf u} \approx {\bf v}$ because ${\bf u}(y,z,t, t') = ytzyt'z$. (Here $_{\bf u}t' = \Theta^{-1}_{\bf u}({_{\bf U}t})$.)
\end{proof}

Claim \ref{r51} and  Lemma \ref{samelabels} imply that the set $l_{{\bf U}_n, {\bf U}}(X)= \{{_{1{\bf U}}x}, {_{1{\bf U}}y}, {_{1{\bf U}}z_1}, {_{1{\bf U}}z_n, {_{2{\bf U}}y}, {_{2{\bf U}}x}}\}$
is $l_{{\bf U}, {\bf V}}$-stable in ${\bf U} \approx {\bf V}$. Therefore, the monoid $S$ is non-finitely based  by Lemma \ref{nfblemma1}.

{\bf Row 6.} Similar to  Row 5.

{\bf Row 7.}  Here $W = \{xtxyty, xyyx \}$ and \[{\bf U}_n = [Xn][nX][Yn][nY] \approx [Yn][nY][Xn][nX] = {\bf V}_n.\]
Let $S$ be a monoid such that each word in the set $W$ is an isoterm for $S$ and $S$ satisfies the identity ${\bf U}_n \approx {\bf V}_n$ for each $n>1$.
If ${\bf U} \in [\![{\bf U}_n]\!]_S$ then ${\bf U}$ satisfies the following properties:

(P1) $D_{\{x_1, \dots, x_n\}}({\bf U}) = [Yn][nY]= y_1y_2 \dots y_n y_n \dots y_2 y_1$;

(P2) $D_{\{y_1, \dots, y_n\}}({\bf U}) = [Xn][nX]= x_1x_2 \dots x_n x_n \dots x_2 x_1$.

(Properties (P1) and (P2) follow from the fact that  $D_{\{y_1, \dots, y_n\}}({\bf U}_n) = [Xn][nX] = x_1x_2 \dots x_n x_n \dots x_2 x_1 \succeq xyyx$.)

Now take $n>10$ and $X=\{{_{2{\bf U}_n}x}_1, {_{1{\bf U}_n}y}_1\}$.  Evidently, the set $X=\{{_{2{\bf U}_n}x}_1, {_{1{\bf U}_n}y}_1\}$ is $l_{{\bf U}_n, {\bf V}_n}$-unstable in ${\bf U}_n \approx {\bf V}_n$. We choose $\mathfrak X$ to be the empty set of variables.

Let us check the third condition of Lemma \ref{nfblemma1}. Let ${\bf U} \in [\![{\bf U}_n]\!]_S$ be a word such that the set $X=\{{_{2{\bf U}_n}x}_1, {_{1{\bf U}_n}y}_1\}$ is $l_{{\bf U}_n, {\bf U}}$-stable in ${\bf U}_n \approx {\bf U}$, i.e.
${_{2{\bf U}}x}_1 <_{\bf U} {_{1{\bf U}}y}_1$.

Let ${\bf u}$ be a word in less than $n/4$ variables such that $\Theta({\bf u})={\bf U}$ for some substitution $\Theta: \mathfrak A \rightarrow \mathfrak A^+$.
Since the word ${\bf u}$ has less than $n/2$ variables, for some
$c \in \ocs({\bf u})$ and $2 < i < n-1$ both ${_{2{\bf U}}x}_{i}$ and ${_{2{\bf U}}x}_{i+1}$ are contained in $\Theta_{\bf u}(c)$.
Then Property (P2) implies that $c$ must be the only occurrence of a linear variable $t_1$ in ${\bf u}$.
Similarly, the word ${\bf u}$ contains a linear letter $t_2$ such that $\Theta_{\bf u}( {_{\bf u}t}_2)$ contains both ${_{2{\bf U}}y}_{i}$ and ${_{2{\bf U}}y}_{i+1}$.

Since $\Theta^{-1}_{\bf u}$ is a homomorphism from $(\ocs({\bf U}), <_{\bf U})$ to $(\ocs({\bf u}), <_{\bf u})$,
we have that $\Theta^{-1}_{\bf u}({_{1{\bf U}}x}_1) \le_{\bf u} (_{\bf u}t_1) \le_{\bf u}  \Theta^{-1}_{\bf u}({_{2{\bf U}}x}_1) \le_{\bf u}  \Theta^{-1}_{\bf u}({_{1{\bf U}}y}_1) \le_{\bf u} (_{\bf u}t_2)  \le_{\bf u} \Theta^{-1}_{\bf u}({_{2{\bf U}}y}_1)$.

Now $\Theta^{-1}_{\bf u}({_{2{\bf U}}x}_1)$ and $\Theta^{-1}_{\bf u}({_{1{\bf U}}y}_1)$ are occurrences of some variables $x$ and $y$ in ${\bf u}$.
By Fact \ref{2occ}, the variables $x$ and $y$ occur at most twice in ${\bf u}$ and if each of them occurs twice in ${\bf u}$ then  $_{1{\bf u}}x = \Theta^{-1}_{\bf u}({_{1{\bf U}}x}_1)$, $_{2{\bf u}}x =\Theta^{-1}_{\bf u}({_{2{\bf U}}x}_1)$, $_{1{\bf u}}y = \Theta^{-1}_{\bf u}({_{1{\bf U}}y}_1)$ and $_{2{\bf u}}y =\Theta^{-1}_{\bf u}({_{2{\bf U}}y}_1)$.

 Now let $\bf v$ be an arbitrary word for which $S$ satisfies the identity ${\bf u} \approx {\bf v}$ and ${\bf V} = \Theta({\bf v})$.
 If either $x$ or $y$ is linear in ${\bf u}$ then the set $\{{_{1{\bf u}}x},  {_{1{\bf u}}y}\}$ is $l_{{\bf u}, {\bf v}}$-stable in ${\bf u} \approx {\bf v}$ because $xtxyty \preceq xtx$. If both $x$ and $y$ occur twice in ${\bf u}$ then the set $\{{_{2{\bf u}}x},  {_{1{\bf u}}y}\}$ is $l_{{\bf u}, {\bf v}}$-stable in ${\bf u} \approx {\bf v}$ because
 ${\bf u}(x,y,t) = xtxyty$ is an isoterm for $S$.
 Since the set $\{{_{2{\bf u}}x},  {_{1{\bf u}}y}\}$ is $l_{{\bf u}, {\bf v}}$-stable in ${\bf u} \approx {\bf v}$, we have  $({_{2{\bf v}}x}) \le_{\bf v}  ({_{1{\bf v}}y})$.
Then Lemma \ref{samelabels} implies that $({_{2{\bf V}}x}_1) <_{\bf V} ({_{1{\bf V}}y}_1)$.
This means that the set $l_{{\bf U}_n, {\bf U}}(X)=\{{_{2{\bf U}}x}_1, {_{1{\bf U}}y}_1 \}$ is $l_{{\bf U}, {\bf V}}$-stable in ${\bf U} \approx {\bf V}$. Therefore, the monoid $S$ is non-finitely based  by Lemma \ref{nfblemma1}.

{\bf Row 8.} Here $W = \{ xxyy \} \cup \{ytyx^dtx^{m-d}, x^{m-d}tx^dyty | 0 <d <m\}$ for some $m>2$ and ${\bf U}_n = yt_1x^{m-1}yp_1^2 \dots p_n^2zxt_2z \approx = yt_1x^{m}yp_1^2 \dots p_n^2zt_2z = {\bf V}_n$.
Let $S$ be a monoid such that each word in the set $W$ is an isoterm for $S$ and $S$ satisfies the identity ${\bf U}_n \approx {\bf V}_n$ for each $n>1$.
If ${\bf U} \in [\![{\bf U}_n]\!]_S$ then ${\bf U}$ satisfies the following property:

(P) $D_x({\bf U}) = yt_1 yp_1^2 \dots p_n^2zt_2z$.

(Property (P) follows from the fact that $D_x({\bf U}_n)= yt_1 yp_1^2 \dots p_n^2zt_2z \succeq xxyy$.)

Now take $n>10$, $X=\{{{_{{\bf U}_n}t_1}}, {_{1{\bf U}_n}x}, {_{2{\bf U}_n}y}, {_{1{\bf U}_n}z, {_{m{\bf U}_n}x}, {_{{\bf U}_n}t}_2}\}$ and $\mathfrak X = \{x \}$.
Evidently, the set $X$ is $l_{{\bf U}_n, {\bf V}_n}$-unstable in ${\bf U}_n \approx {\bf V}_n$.

Let us check the third condition of Lemma \ref{nfblemma1}. Let ${\bf U} \in [\![{\bf U}_n]\!]_S$ be a word with $occ_{\bf U}(x)=m$ such that
the set $X$ is $l_{{\bf U}_n, {\bf V}_n}$-stable in ${\bf U}_n \approx {\bf U}$, i.e.
$({_{{\bf U}}t_1}) <_{\bf U} ({_{1{\bf U}}x}) <_{\bf U} ({_{2{\bf U}}y}) <_{\bf U} ({_{1{\bf U}}z}) <_{\bf U}
({_{m{\bf U}}x}) <_{\bf U} ({_{{\bf U}}t_2})$.

Let ${\bf u}$ be a word in less than $n/4$ variables such that $\Theta({\bf u})={\bf U}$ for some substitution $\Theta: \mathfrak A \rightarrow \mathfrak A^+$.
Since the word ${\bf u}$ has less than $n/2$ variables, for some
$c \in \ocs({\bf u})$ and $2 < i < n-1$ both ${_{1{\bf U}}p}_{i}$ and ${_{1{\bf U}}p}_{i+1}$ are contained in $\Theta_{\bf u}(c)$.
Then Property (P2) implies that $c$ must be the only occurrence of a linear variable $t$ in ${\bf u}$.

Since $\Theta^{-1}_{\bf u}$ is a homomorphism from $(\ocs({\bf U}), <_{\bf U})$ to $(\ocs({\bf u}), <_{\bf u})$,
we have that $\Theta^{-1}_{\bf u}({_{{\bf U}}t}_1)  \le_{\bf u}  \Theta^{-1}_{\bf u}({_{1{\bf U}}x}) \le_{\bf u}  \Theta^{-1}_{\bf u}({_{2{\bf U}}y})
\le_{\bf u} (_{\bf u}t)  \le_{\bf u} \Theta^{-1}_{\bf u}({_{1{\bf U}}z}) \le_{\bf u}  \Theta^{-1}_{\bf u}({_{m{\bf U}}x})  \le_{\bf u}  \Theta^{-1}_{\bf u}({_{{\bf U}}t_2})$.

\begin{claim} \label{r8} Each variable in $\Theta^{-1}(x)$ is stable in ${\bf u}$ with respect to $S$.
\end{claim}

\begin{proof} If a variable $x \in \Theta^{-1}(x)$ occurs in ${\bf u}$ exactly $m$ times, then
 ${_{1{\bf u}}x} =\Theta^{-1}_{\bf u}({_{1{\bf U}}x})$ and ${_{m{\bf u}}x} =\Theta^{-1}_{\bf u}({_{m{\bf U}}x})$. Since $({_{1{\bf u}}x}) <_{\bf u} (_{\bf u}t) < _{\bf u} ({_{m{\bf u}}x})$ we have that ${\bf u}(x,t) = x^dtx^{m-d}$ for some  $0 < d <m$. Since this word is an isoterm for $S$, the variable $x$ is stable in ${\bf u}$ with respect to $S$. If a variable $y \in \Theta^{-1}(x)$ occurs in ${\bf u}$ less than $m$ times, then $y$ is stable in ${\bf u}$ with respect to $S$ because the word $y^{m-1}$ is an isoterm for $S$. \end{proof}

Now let $\bf v$ be an arbitrary word for which $S$ satisfies the identity ${\bf u} \approx {\bf v}$ and ${\bf V} = \Theta({\bf v})$.
Then by Claim \ref{r8} each variable in $\Theta^{-1}(x)$ is stable in ${\bf u} \approx {\bf v}$.
Therefore, the variable $x$ is stable in ${\bf U} \approx {\bf V}$, i.e. $occ_{\bf U}(x)=occ_{\bf V}(x)=m$.

\begin{claim} \label{r81} The set $l_{{\bf U}_n, {\bf U}}(X)= \{{{_{{\bf U}}t_1}}, {_{1{\bf U}}x}, {_{2{\bf U}}y}, {_{1{\bf U}}z, {_{m{\bf U}}x}, {_{{\bf U}}t}_2}\}$ is $l_{{\bf U}, {\bf V}}$-stable in ${\bf U} \approx {\bf V}$.
\end{claim}

\begin{proof} In view of the fact that $yt_1 yp_1^2 \dots p_n^2zt_2z \succeq xxyy$, it is enough to prove that the sets $\{ {{_{{\bf U}}t_1}}, {_{1{\bf U}}x}, {_{2{\bf U}}y} \}$ and $\{ {_{1{\bf U}}z, {_{m{\bf U}}x}, {_{{\bf U}}t}_2} \}$ are $l_{{\bf U}, {\bf V}}$-stable in ${\bf U} \approx {\bf V}$.
We only show that the set $\{{{_{{\bf U}}t_1}}, {_{1{\bf U}}x}, {_{2{\bf U}}y} \}$ is $l_{{\bf U}, {\bf V}}$-stable in ${\bf U} \approx {\bf V}$. (The proof for the other set is symmetric.)

Since the variable $t_1$ is linear in ${\bf U}$,  we may assume that $t_1$ is linear in ${\bf u}$ and  $_{\bf u}t_1 = \Theta_{\bf u}^{-1}(_{\bf U}t_1)$.
Since $y$ occurs twice in ${\bf U}$, we may assume that either $_{1{\bf u}}y = \Theta^{-1}_{\bf u}({_{1{\bf U}}y})$ and $_{2{\bf u}}y =\Theta^{-1}_{\bf u}({_{2{\bf U}}y})$ or $ _{\bf u}t_3 = \Theta^{-1}_{\bf u}({_{1{\bf U}}y})$ and $ _{\bf u}y = \Theta^{-1}_{\bf u}({_{2{\bf U}}y})$.
In view of Fact \ref{thetainv} we may assume that $\Theta^{-1}_{\bf u}({_{1{\bf U}}x}) = {_{1{\bf u}}x}$.

If the variable $x$ occurs $m$ times in ${\bf u}$, then $\Theta^{-1}(x)=\{x \}$, $occ_{\bf v}(x)=m$ and $\Theta^{-1}_{\bf u}({_{m{\bf U}}x}) = {_{m{\bf u}}x}$.
Since ${_{1{\bf u}}x} \le_{\bf u} (_{\bf u}t)  \le_{\bf u} {_{m{\bf u}}x}$,
we have that ${\bf u}(x,y,t_1,t) = yt_1x^dyx^ctx^p$ (or ${\bf u}(x,y,t_1,t) =t_1x^dyx^ctx^p$) for some $d,p>0$ and $d+c+p=m$.
Since for each  $0 <d <m$ the word $yt_1yx^dtx^{m-d}$ is an isoterm for $S$,
we have that $(_{\bf v}t_1) \le_{\bf v} ({_{1{\bf v}}x}) \le_{\bf v} ({_{2{\bf v}}y})$ (or $({_{\bf v}t}_1) <_{\bf v} ({_{1{\bf v}}x}) <_{\bf v} ({_{{\bf v}}y})$). Since $\Theta^{-1}(x)=\{x \}$, we have that $\Theta^{-1}_{\bf v}({_{1{\bf V}}x}) = {_{1{\bf v}}x}$. Since $\Theta^{-1}_{\bf v}$ is a homomorphism from $(\ocs({\bf V}), <_{\bf V})$ to $(\ocs({\bf v}), <_{\bf v})$
and in view of Lemma \ref{samelabels}, we conclude that
$({_{{\bf V}}t_1}) <_{\bf V}  ({_{1{\bf V}}x}) <_{\bf V} ({_{2{\bf V}}y})$.

If the variable $x$ occurs less than $m$ times in ${\bf u}$, then ${\bf u}(x,y,t_1)=yt_1x^dyx^c$ (or ${\bf u}(x,y,t_1)= t_1x^dyx^c$) with $d>0$ and $occ_{\bf u}(x)=d+c<m$.
Since the words $yt_1yx^{d+c}$ and $x^dtx^c$ are isoterms for $S$, we have  that $({_{\bf v}t}_1) <_{\bf v} ({_{1{\bf v}}x}) <_{\bf v} ({_{2{\bf v}}y})$
(or $({_{\bf v}t}_1) <_{\bf v} ({_{1{\bf v}}x}) <_{\bf v} ({_{{\bf v}}y})$).
In view of Fact \ref{thetainv}, for some variable $z$ we have $\Theta^{-1}_{\bf v}({_{1{\bf V}}x}) = {_{1{\bf v}}z}$. If $z=x$ then Lemma \ref{samelabels} implies that
$({_{{\bf V}}t_1}) <_{\bf V}  ({_{1{\bf V}}x}) <_{\bf V} ({_{2{\bf V}}y})$.

 If $z \ne x$ then we have $({_{1{\bf v}}z}) <_{\bf v} ({_{1{\bf v}}x}) \le_{\bf v} ({_{2{\bf v}}y})$ (or $({_{1{\bf v}}z}) <_{\bf v} ({_{1{\bf v}}x}) \le_{\bf v} ({_{{\bf v}}y})$) and $({_{1{\bf u}}x}) <_{\bf u} ({_{1{\bf u}}z})$.
 Let us assume that $({_{1{\bf v}}z})\le_{\bf v} ({_{\bf v}t}_1)$ and obtain a contradiction.
 Since  by Claim \ref{r8} we have $occ_{\bf u}(z) =  occ_{\bf v}(z) < m$, the word ${\bf v}(z,t_1)$ is an isoterm for $S$.
 Consequently, $({_{1{\bf u}}z}) \le_{\bf u} ({_{\bf u}t}_1)$. But on the other hand $(_{\bf u}t_1) \le_{\bf u} ({_{1{\bf v}}x}) <_{\bf u} ({_{1{\bf u}}z})$.
 To avoid a contradiction, we must assume that $({_{\bf v}t}_1) \le_{\bf v} ({_{1{\bf v}}z})$.

Since $\Theta^{-1}_{\bf v}$ is a homomorphism from $(\ocs({\bf V}), <_{\bf V})$ to $(\ocs({\bf v}), <_{\bf v})$ and  $({_{\bf v}t}_1) \le_{\bf v}({_{1{\bf v}}z}) \le_{\bf v} ({_{2{\bf v}}y})$ (or $({_{\bf v}t}_1) \le_{\bf v}({_{1{\bf v}}z}) \le_{\bf v} ({_{{\bf v}}y})$) we have that $({_{{\bf V}}t_1}) <_{\bf V}  ({_{1{\bf V}}x}) <_{\bf V} ({_{2{\bf V}}y})$.
\end{proof}

Therefore, the monoid $S$ is non-finitely based  by Lemma \ref{nfblemma1}.
\end{proof}

In articles \cite{OS1,OS2}, we will use the sufficient conditions in Table \ref{classes} to identify certain intervals in the lattice of semigroup varieties
which contain only NFB monoids. The results from \cite{OS1,OS2} also imply that if $W$ is one of the eight sets of words in the left column of Table \ref{classes} then the finite monoid $S(W)$ satisfies the corresponding identities in the right column, and consequently, is non-finitely based by Theorem \ref{nfbpairs}.
Two of these monoids, $S(\{xyyx\})$ and $S(\{xtxyty\})$ have been known to be non-finitely based since \cite{MJ,JS,OS}.

A semigroup can be non-finitely based by way of multiple sets of identities. For example, according to the first row of Table \ref{classes}, the monoid
$S(\{xyyx\})$ is non-finitely based by way of the set of identities $\{ xx[Yn][nY] \approx [Yn][nY]xx \mid n>1\}$, while according  to Lemma 5.2 in \cite{OS} the monoid
$S(\{xyyx\})$ is non-finitely based by way of the set of identities $\{ x[Yn]tx[nY] \approx [Yn]xt[nY]x \mid n>1 \}$. According to Example 4.1 in \cite{JS} the monoid
$S(\{xyyx\})$ is non-finitely based by way of yet another set of identities. Another example: the original proof of M. Jackson in \cite{MJ} that the monoid $S(\{xtxyty\})$ is non-finitely based uses a set of identities different from the one in the third row of Table \ref{classes}.

\section{Simplified versions of Perkins' and Lee's sufficient conditions under which a semigroup is non-finitely based}

The following theorem is obtained from Perkins sufficient condition \cite[Theorem 7]{P} by dropping its requirement that ``$S$ does not satisfy the identity $xxt \approx txx$".

\begin{theorem} \label{PSC}

Let $S$ be a monoid such that

(i) $S$ satisfies the identity
${\bf U}_n= x [Yn]x [nY] \approx  x [nY] x [Yn] ={\bf V}_n $  for each $n>1$;

(ii) $S$ does not satisfy the identity $xyxy \approx xyyx$;

(iii) the words $xytyx$ and $xtyxy$ are isoterms for $S$.

Then $S$ is non-finitely based.

\end{theorem}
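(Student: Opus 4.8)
The plan is to apply Lemma \ref{nfblemma1} with the good collection of maps $\{ l_{{\bf u}, {\bf v}}\}$, choosing $k=2$ and $\mathfrak X=\{x\}$, so that the two tracked sets record the orders inside the two $y$-blocks of ${\bf U}_n$ separately. I would first extract the elementary consequences of hypothesis (iii). Since $xytyx \preceq xtx$ (verified, exactly as in the proof of Lemma \ref{row1}, by substituting $t\mapsto yty$ into a hypothetical nontrivial identity with left side $xtx$), the word $xtx$, and hence $xy$ and $xx$, is an isoterm for $S$; by Fact \ref{linst} the monoid $S$ then satisfies only regular identities, the set of linear variables is stable, and $occ_{\bf U}(v)=2$ for every $v\in\con({\bf U}_n)$ and every ${\bf U}\in[\![{\bf U}_n]\!]_S$. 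Deleting $x$ (substituting $x\mapsto 1$) shows $S$ satisfies $[Yn][nY]\approx[nY][Yn]$, so the $y$-blocks are individually malleable; the obstruction to non-finite basedness must therefore come from the interaction with $x$, controlled by (ii) and (iii).

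Next I would prove a structural Claim about $[\![{\bf U}_n]\!]_S$ in the spirit of Claim \ref{L} and Claim \ref{L1}. For ${\bf U}\in[\![{\bf U}_n]\!]_S$ the restriction ${\bf U}(x,y_i)$ is $\sim_S$-equivalent to ${\bf U}_n(x,y_i)=xy_ixy_i$; hypothesis (ii) forbids $xyxy\approx xyyx$, which together with the isoterms of (iii) pins down (to the extent the hypotheses allow) the relative position of the two occurrences of $x$ with respect to the occurrences of each $y_i$, and hence the two-block shape $x\,[\text{first occurrences}]\,x\,[\text{second occurrences}]$. I would then set $X_1=\{{_{1{\bf U}_n}y}_1,{_{1{\bf U}_n}y}_2\}$ and $X_2=\{{_{2{\bf U}_n}y}_1,{_{2{\bf U}_n}y}_2\}$. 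Passing from ${\bf U}_n$ to ${\bf V}_n$ reverses both blocks, so each of $X_1,X_2$ is $l_{{\bf U}_n,{\bf V}_n}$-unstable; this gives Condition (II). (Note that every pair involving $x$ is \emph{stable} in ${\bf U}_n\approx{\bf V}_n$, which is exactly why the unstable data must live inside the blocks and why two separate sets are natural.)

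For Condition (III) I would assume ${\bf U}\in[\![{\bf U}_n]\!]_S$ with $x$ stable and both $X_1,X_2$ $l_{{\bf U}_n,{\bf U}}$-stable, so that ${_{1{\bf U}}y}_1<_{\bf U}{_{1{\bf U}}y}_2$ and ${_{2{\bf U}}y}_1<_{\bf U}{_{2{\bf U}}y}_2$, and propagate these inequalities along both blocks using the Claim. Given a short identity ${\bf u}\approx{\bf v}$ with $\Theta({\bf u})={\bf U}$ and ${\bf u}$ in fewer than $n/4$ variables, a pigeonhole count (as in the proofs of Rows 3 and 7) produces a linear variable $t_1$ of ${\bf u}$ whose block $\Theta_{\bf u}({_{\bf u}t}_1)$ swallows two consecutive first occurrences, and symmetrically a linear $t_2$ swallowing two consecutive second occurrences. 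Since $\Theta^{-1}_{\bf u}$ is a homomorphism of ordered occurrence sets, the pullbacks of the tracked occurrences locate themselves relative to ${_{\bf u}t}_1$, ${_{\bf u}t}_2$ and to the pullback of $x$; by Fact \ref{2occ} these pullbacks are occurrences of variables of ${\bf u}$ of multiplicity at most two. A case analysis then shows that the relevant three-variable restriction of ${\bf u}$ is, after deletion, one of the isoterms $xtyxy$ or $xytyx$ of (iii) (or degenerates to a word $\succeq xtx$ when some pullback is linear), so the tracked pair is $l_{{\bf u},{\bf v}}$-stable; the same isoterm forces $occ_{\bf V}(x)=2$, keeping $x$ stable. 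Lemma \ref{samelabels} (and Lemma \ref{stvar} for $x$) then pushes stability forward, giving that $l_{{\bf U}_n,{\bf U}}(X_1)$ and $l_{{\bf U}_n,{\bf U}}(X_2)$ are $l_{{\bf U},\Theta({\bf v})}$-stable.

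With (I)--(III) in place, Lemma \ref{nfblemma1} yields that $S$ is non-finitely based. I expect the main obstacle to be precisely this verification of Condition (III). Because Perkins' extra hypothesis---that a square $xx$ cannot be moved past a linear letter---has been dropped, the position of the doubled variable $x$ is no longer constrained a priori, so a single short identity might try to reverse a block by sliding material across $x$. The remedy is to use \emph{both} isoterms of (iii) simultaneously, $xtyxy$ to anchor $x$ before a first occurrence and $xytyx$ to anchor it between paired occurrences, and the necessity of controlling the first and second blocks at the same time is exactly what forces the choice $k=2$ rather than the single tracked set used everywhere else.
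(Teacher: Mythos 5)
Your overall scaffolding is the paper's (Lemma \ref{nfblemma1} with the good collection $\{l_{{\bf u},{\bf v}}\}$, a pigeonhole step producing linear letters, Fact \ref{2occ}, Lemma \ref{samelabels}), but there is a genuine gap at your ``structural Claim,'' and your choice of tracked data is too weak to compensate for it. From (i)--(iii) you cannot pin down the position of $x$ relative to the $y_i$'s in an arbitrary ${\bf U}\in[\![{\bf U}_n]\!]_S$. Deletion gives ${\bf U}(x,y_i)\sim_S xy_ixy_i$, and hypothesis (ii) excludes the shape $xy_iy_ix$; moreover (i) yields (deleting $x$, $n=2$) $xyyx\approx yxxy$, so combining with (ii) also excludes $y_ixxy_i$. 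But nothing in (i)--(iii) excludes, say, ${\bf U}(x,y_i)=y_ixy_ix$ or $xxy_iy_i$ or $y_iy_ixx$: identities such as $xyxy\approx yxyx$ are not ruled out by $xytyx$ and $xtyxy$ being isoterms, and no composition with $xyyx\approx yxxy$ reaches the forbidden $xyxy\approx xyyx$. Since your sets $X_1=\{{_{1{\bf U}_n}y}_1,{_{1{\bf U}_n}y}_2\}$, $X_2=\{{_{2{\bf U}_n}y}_1,{_{2{\bf U}_n}y}_2\}$ and $\mathfrak X=\{x\}$ record nothing about the order of the occurrences of $x$ against those of the $y_i$'s, the stability hypothesis of Condition (III) does not restore this information either: along a derivation, a word ${\bf U}$ may (for all you have shown) have $x$ displaced relative to the two blocks while your two tracked pairs remain in order, and then your case analysis collapses, because the three-variable restrictions of ${\bf u}$ need no longer land on the isoterms $xtyxy$, $xytyx$ or on words $\succeq xtx$. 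This is exactly why the number of variables $m$ in Perkins' dropped hypothesis mattered, and the burden of eliminating it cannot be discharged by assertion.

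The paper resolves this by loading the missing order into the tracked sets themselves: it takes $k=n-1$ (this theorem is its one application with $k\ne 1$, and $k$ grows with $n$, not $k=2$), choosing for each $z\in\{y_2,\dots,y_n\}$ the six-element set $X_i=\{{_{1{\bf U}_n}x},{_{2{\bf U}_n}x},{_{1{\bf U}_n}y}_1,{_{2{\bf U}_n}y}_1,{_{1{\bf U}_n}z},{_{2{\bf U}_n}z}\}$ together with $\mathfrak X=\con({\bf U}_n)$, so that Condition (III)'s hypothesis hands over the full chain $({_{1{\bf U}}x})<_{\bf U}({_{1{\bf U}}y}_1)<_{\bf U}({_{1{\bf U}}z})<_{\bf U}({_{2{\bf U}}x})<_{\bf U}({_{2{\bf U}}z})<_{\bf U}({_{2{\bf U}}y}_1)$ for every $z$; a four-case analysis on the positions of the two linear letters $t_1,t_2$ then yields restrictions such as $xyt_1zxt_2zy$, whose stability is verified pairwise via the isoterms $xytxty$, $yztzy$, $xtzxz$, $xztzx$ (all $\preceq$-consequences of (iii)). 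A secondary inaccuracy: your linearity-of-$t_1$ step is not ``as in Rows 3 and 7'' --- there the deleted word had interleaved linear letters, whereas here the first occurrences of the $y_i$ are adjacent in ${\bf U}_n$; linearity of the swallowing occurrence is instead forced by (ii), since a non-linear variable of ${\bf u}$ covering two first occurrences of $y_i,y_j$ would force ${\bf U}(y_i,y_j)\in\{y_iy_jy_iy_j,\ y_jy_iy_jy_i\}$, i.e. $S$ would satisfy $xyxy\approx xyyx$ or $xyxy\approx yxxy$, both of which fail. If you enlarge your tracked sets to the paper's $X_i$ (or prove, rather than assert, the two-block rigidity of the whole class $[\![{\bf U}_n]\!]_S$, which the hypotheses do not seem to support), the rest of your outline goes through.
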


\begin{proof} Take $n>1$ and consider the identity ${\bf U}_n \approx {\bf V}_n$. For each $z \in \{y_2, y_3, \dots, y_{n-1}, y_{n} \}$ we choose
$X_i=\{ {_{1{\bf U}_n}x},  {_{2{\bf U}_n}x},  {_{1{\bf U}_n}y}_1,  {_{2{\bf U}_n}y}_1,  {_{1{\bf U}_n}z}, {_{2{\bf U}_n}z} \}$. We also choose $\mathfrak X = \con ({\bf U}_n)$. Evidently, the set $X=X_2$ is $l_{{\bf U}_n, {\bf V}_n}$-unstable in ${\bf U}_n \approx {\bf V}_n$.

Let us check the third condition of Lemma \ref{nfblemma1}. Let ${\bf U} \in [\![{\bf U}_n]\!]_S$ be a word such that each variable is 2-occurring in $\bf U$ and
for each $2 \le i \le n$ the set $X_i$ is $l_{{\bf U}_n, {\bf U}}$-stable in ${\bf U}_n \approx {\bf U}$. So, for each $z \in \{y_2, y_3, \dots, y_{n-1}, y_{n} \}$ we have

\[({_{1{\bf U}}x}) <_{\bf U} ({_{1{\bf U}}y}_1) <_{\bf U} ({_{1{\bf U}}z}) <_{\bf U} ({_{2{\bf U}}x})  <_{\bf U} ({_{2{\bf U}}z}) <_{\bf U} ({_{2{\bf U}}y}_1).\]

Let ${\bf u}$ be a word in less than $n/4$ variables such that $\Theta({\bf u})={\bf U}$ for some substitution $\Theta: \mathfrak A \rightarrow \mathfrak A^+$.
Since the word ${\bf u}$ has less than $n/2$ variables, for some
$c_1 \in \ocs({\bf u})$ and $2 \le i, j \le n$ both ${_{1{\bf U}}y}_{i}$ and ${_{1{\bf U}}y}_{j}$ are contained in $\Theta_{\bf u}(c_1)$ and for some
$c_2 \in \ocs({\bf u})$ and $2 < k, p < n$ both ${_{2{\bf U}}y}_{k}$ and ${_{2{\bf U}}y}_{p}$ are contained in $\Theta_{\bf u}(c_2)$.

Since $S$ satisfies $xyyx \approx yxxy$ but does not satisfy $xyxy \approx xyyx$, the monoid $S$
satisfies neither $xyxy \approx xyyx$ nor $xyxy \approx yxxy$. Therefore,
both $c_1$ and $c_2$ must be the only occurrences of some linear variables $t_1$ and $t_2$ in ${\bf u}$.

Now fix an arbitrary $z \in \{y_2, y_3, \dots, y_{n-1}, y_{n} \}$.
Since $\Theta^{-1}_{\bf u}$ is a homomorphism from $(\ocs({\bf U}), <_{\bf U})$ to $(\ocs({\bf u}), <_{\bf u})$, for each $z \in \{y_2, y_3, \dots, y_{n-1}, y_{n} \}$
one of the following four cases is possible:

$(1) \hskip .1 in  \Theta^{-1}_{\bf u}({_{1{\bf U}}x})  \le_{\bf u}  \Theta^{-1}_{\bf u}({_{1{\bf U}}y}_1) \le_{\bf u} (_{\bf u}t_1)
  \le_{\bf u} \Theta^{-1}_{\bf u}({_{1{\bf U}}z})  \le_{\bf u}$

\( \begin{array}{l}
  \le_{\bf u}  \Theta^{-1}_{\bf u}({_{2{\bf U}}x}) \le_{\bf u} (_{\bf u}t_2) \le_{\bf u} \Theta^{-1}_{\bf u}({_{2{\bf U}}z})  \le_{\bf u} \Theta^{-1}_{\bf u}({_{2{\bf U}}y}_1);

\end{array} \)

\hskip .2in

$(2) \hskip .1 in  \Theta^{-1}_{\bf u}({_{1{\bf U}}x})  \le_{\bf u}  \Theta^{-1}_{\bf u}({_{1{\bf U}}y}_1) \le_{\bf u} \Theta^{-1}_{\bf u}({_{1{\bf U}}z})
  \le_{\bf u}  (_{\bf u}t_1) \le_{\bf u}$

\( \begin{array}{l}
  \le_{\bf u}  \Theta^{-1}_{\bf u}({_{2{\bf U}}x}) \le_{\bf u} (_{\bf u}t_2) \le_{\bf u} \Theta^{-1}_{\bf u}({_{2{\bf U}}z})  \le_{\bf u} \Theta^{-1}_{\bf u}({_{2{\bf U}}y}_1);

\end{array} \)

\hskip .2in

$(3) \hskip .1 in  \Theta^{-1}_{\bf u}({_{1{\bf U}}x})  \le_{\bf u}  \Theta^{-1}_{\bf u}({_{1{\bf U}}y}_1) \le_{\bf u} (_{\bf u}t_1)
  \le_{\bf u} \Theta^{-1}_{\bf u}({_{1{\bf U}}z})  \le_{\bf u}$

\( \begin{array}{l}
  \le_{\bf u}  \Theta^{-1}_{\bf u}({_{2{\bf U}}x}) \le_{\bf u} \Theta^{-1}_{\bf u}({_{2{\bf U}}z}) \le_{\bf u} (_{\bf u}t_2)   \le_{\bf u} \Theta^{-1}_{\bf u}({_{2{\bf U}}y}_1);

\end{array} \)

\hskip .2in

$(4) \hskip .1 in  \Theta^{-1}_{\bf u}({_{1{\bf U}}x})  \le_{\bf u}  \Theta^{-1}_{\bf u}({_{1{\bf U}}y}_1) \le_{\bf u} \Theta^{-1}_{\bf u}({_{1{\bf U}}z})
  \le_{\bf u}  (_{\bf u}t_1) \le_{\bf u}$

\( \begin{array}{l}
  \le_{\bf u}  \Theta^{-1}_{\bf u}({_{2{\bf U}}x}) \le_{\bf u} \Theta^{-1}_{\bf u}({_{2{\bf U}}z}) \le_{\bf u} (_{\bf u}t_2)   \le_{\bf u} \Theta^{-1}_{\bf u}({_{2{\bf U}}y}_1).

\end{array} \)

\hskip .2in

In each of the four cases, $\Theta^{-1}_{\bf u}({_{1{\bf U}}x})$, $\Theta^{-1}_{\bf u}({_{1{\bf U}}y}_1)$ and $\Theta^{-1}_{\bf u}({_{1{\bf U}}z})$,
are occurrences of some variables $x$, $y$ and $z$ in ${\bf u}$. By Fact \ref{2occ}, each of the variables $x$, $y$ and $z$ occurs at most twice in ${\bf u}$ and
if $occ_{\bf u}(x)= occ_{\bf u}(y) = occ_{\bf u}(z) =2$ then  $_{1{\bf u}}x = \Theta^{-1}_{\bf u}({_{1{\bf U}}x})$, $_{2{\bf u}}x =\Theta^{-1}_{\bf u}({_{2{\bf U}}x})$, $_{1{\bf u}}y = \Theta^{-1}_{\bf u}({_{1{\bf U}}y}_1)$, $_{2{\bf u}}y =\Theta^{-1}_{\bf u}({_{2{\bf U}}y}_1)$, $_{1{\bf u}}z = \Theta^{-1}_{\bf u}({_{1{\bf U}}z})$ and $_{2{\bf u}}z =\Theta^{-1}_{\bf u}({_{2{\bf U}}z})$.

There are four possibilities for the word ${\bf u}(x,y,z,t_1, t_2)$:

(1) $\hskip .1 in {\bf u}(x,y,z,t_1, t_2) = xyt_1zxt_2zy$;

(2) $\hskip .1 in {\bf u}(x,y,z,t_1, t_2) = xy zt_1xt_2zy$;

(3) $\hskip .1 in {\bf u}(x,y,z,t_1, t_2) = xyt_1z x z t_2 y$;

(4) $ \hskip .1 in {\bf u}(x,y,z,t_1, t_2) = xy zt_1x zt_2y$.

Since the word $xtx$ is an isoterm for $S$, the words ${\bf u}(x,t_1, t_2)$, ${\bf u}(y,t_1, t_2)$, ${\bf u}(z,t_1, t_2)$ are also isoterms for $S$.
In particular, the variables $x$, $y$ and $z$ are stable in $\bf u$ with respect to $S$. Since $xytyx \preceq xytytx \sim xytxty$,
 the pair $\{x,y\}$ is stable in ${\bf u}(x,y,z,t_1, t_2)$  with respect to $S$ in each of the four cases.

(1) $\hskip .1 in$ If ${\bf u}(x,y,z,t_1, t_2) = xyt_1zxt_2zy$ then
the pair $\{y,z\}$ is stable because the word
 $yztzy$ is an isoterm for $S$  and the pair $\{x,z\}$ is stable because the word $xtzxz$ is an isoterm for $S$.

(2) $\hskip .1 in$ If ${\bf u}(x,y,z,t_1, t_2)= xy zt_1xt_2zy$ then
the pair $\{y,z\}$ is stable because the word $yztzy$ is an isoterm for $S$.

(3) $\hskip .1 in$ If ${\bf u}(x,y,z,t_1, t_2)= xyt_1z x z t_2 y$ then
the pair $\{x,z\}$ is stable because the word $xtzxz$ is an isoterm for $S$.

(4) $\hskip .1 in$ If ${\bf u}(x,y,z,t_1, t_2)= xy zt_1x zt_2y$ then
the pair $\{y,z\}$ is stable because the word $yztzy$ is an isoterm for $S$ and
 the pair $\{x,z\}$ is stable because the word $xztzx$ is an isoterm for $S$.

In each of the four cases, Fact \ref{st1} implies that the word ${\bf u}(x,y,z,t_1, t_2)$ is an isoterm for $S$.
If one of the variables $\{x,y,z\}$ is linear in $\bf u$ then one can show that ${\bf u}(x,y,z,t_1, t_2)$ is an isoterm for $S$ by using similar but more simple arguments.

Now let $\bf v$ be an arbitrary word for which $S$ satisfies the identity ${\bf u} \approx {\bf v}$ and ${\bf V} = \Theta({\bf v})$.
Since each variable in $\Theta^{-1}(x) \cup \Theta^{-1}(y) \cup \Theta^{-1}(z)$ is stable in ${\bf u} \approx {\bf v}$, each variable in $\{x,y,z\}$
is stable in ${\bf U} \approx {\bf V}$, i.e. $occ_{\bf V}(x)= occ_{\bf V}(y) = occ_{\bf V}(z) = 2$.
Now Lemma \ref{samelabels} implies that

\[({_{1{\bf V}}x}) <_{\bf V} ({_{1{\bf V}}y}_1) <_{\bf V} ({_{1{\bf V}}z}) <_{\bf V} ({_{2{\bf V}}x})  <_{\bf V} ({_{2{\bf V}}z}) <_{\bf V} ({_{2{\bf V}}y}_1).\]

This means that  each variable in
$\bf U$ is stable in ${\bf U} \approx {\bf V}$ and for each $2 \le i \le n$, the set $l_{{\bf U}_n, {\bf U}}(X_i)$ is $l_{{\bf U}, {\bf V}}$-stable in ${\bf U} \approx {\bf V}$. Therefore, the monoid $S$ is non-finitely based  by Lemma \ref{nfblemma1}.
\end{proof}

Since Theorem 7 in \cite{P} implies that the six-element Brandt monoid is non-finitely based,
Theorem \ref{PSC} implies that as well.

Theorem \ref{EL} below gives a sufficient condition under which a semigroup is non-finitely based. It is obtained from Theorem 1 in \cite{EL2} by dropping one of its requirements. Theorem \ref{EL} contains the only existing sufficient condition of form ($\mathcal P$) (see the introduction) where a semigroup $S$ is not required to be a monoid. In fact, it is easy to check that a monoid cannot satisfy all three requirements of Theorem \ref{EL}.

\begin{theorem} \label{EL}  Let $k >1$ be any fixed integer. Suppose that a semigroup $S$

(i) satisfies the identities

\[{\bf U}_n= xy^k_1y^k_2 \dots y^k_nx  \approx  xy^k_ny^k_{n-1} \dots y^k_1x ={\bf V}_n, n>1;\]

(ii) does not satisfy the identity

\begin{equation} \label{e1}
x^ky^kx^k \approx x^k(y^kx^k)^{k+1};
\end{equation}

(iii) satisfies the identities

\begin{equation} \label{e2}
x^{k+2} \approx x^2, x^{k+1}yx \approx xyx,  xyx^{k+1} \approx xyx.
\end{equation}

Then $S$ is non-finitely based.

\end{theorem}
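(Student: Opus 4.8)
The plan is to apply Lemma~\ref{nfblemma1}, but---unlike in the earlier theorems---with a good collection of maps other than $\{l_{\mathbf{u},\mathbf{v}}\}$. The reason is that here $S$ is only a semigroup and, because of the power identities $x^{k+2}\approx x^2$ in~(iii), the number of occurrences of a variable in a word of $[\![\mathbf{U}_n]\!]_S$ is determined only modulo $k$; matching the $i$-th occurrence to the $i$-th occurrence is therefore too rigid. Instead I would use the \emph{first-occurrence maps} $g_{\mathbf{u},\mathbf{v}}\colon {_{1\mathbf{u}}x}\mapsto {_{1\mathbf{v}}x}$, defined on $\{{_{1\mathbf{u}}x}\mid x\in\con(\mathbf{u})\cap\con(\mathbf{v})\}$; a direct check of Definition~\ref{trans} shows these form a good collection of maps, and $g_{\mathbf{u},\mathbf{u}}$ is the identity map, so Condition~(I) of Lemma~\ref{nfblemma1} is automatic. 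As a preliminary I would record the consequences of (i)--(iii) that are needed: that $S$ satisfies only regular identities, and a short list of small isoterm facts. Both follow from the non-identity~(ii): any collapse that would delete a variable or move a letter past a linear separator can be shown, using~(i) together with the reductions in~(iii), to force $S$ to satisfy $x^ky^kx^k\approx x^k(y^kx^k)^{k+1}$, contrary to~(ii). (For example $xy\approx x$, or $x^ky^kx^k\approx x^ky^k$, each collapses to the forbidden identity after substituting $x\mapsto x^k,\ y\mapsto y^k$ and reducing exponents by~(iii).)

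Next I would describe $[\![\mathbf{U}_n]\!]_S$. Using~(i) and the small isoterms, one proves a Claim that for every $\mathbf{U}\in[\![\mathbf{U}_n]\!]_S$ the two occurrences of $x$ remain the extreme letters and the first occurrences of $y_1,\dots,y_n$ lie strictly between them. By an argument analogous to Claim~\ref{L}, the $g_{\mathbf{U}_n,\mathbf{U}}$-stability of the outer pair forces the whole chain ${_{1\mathbf{U}}y_1}<_{\mathbf{U}}\cdots<_{\mathbf{U}}{_{1\mathbf{U}}y_n}$ to retain its original order. I would then apply Lemma~\ref{nfblemma1} with the single set $X=\{{_{1\mathbf{U}_n}y_1},{_{1\mathbf{U}_n}y_n}\}$ and $\mathfrak{X}=\emptyset$. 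Since the relative order of the first occurrences of $y_1$ and $y_n$ is reversed in passing from $\mathbf{U}_n$ to $\mathbf{V}_n$, the set $X$ is $g_{\mathbf{U}_n,\mathbf{V}_n}$-unstable, which is Condition~(II).

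The heart of the argument is Condition~(III). Given $\mathbf{U}\in[\![\mathbf{U}_n]\!]_S$ with $X$ being $g$-stable (so the chain of first occurrences is in its original order) and a short identity $\mathbf{u}\approx\mathbf{v}$ in fewer than $n/4$ variables with $\Theta(\mathbf{u})=\mathbf{U}$, a pigeonhole argument exactly as in the proof of Theorem~\ref{SL1} shows that two consecutive blocks $y_i^k y_{i+1}^k$ lie inside a single piece $\Theta(c)$. Here~(ii) does the decisive work: if $c$ were an occurrence of a \emph{non-linear} variable $z$, then $\Theta(z)$ would reproduce this two-block pattern at each of the (at least two) occurrences of $z$, and reordering the copies by means of~(i) would let $S$ derive the forbidden identity of~(ii); hence $c$ must be the unique occurrence of a linear variable $t$, which in $\mathbf{u}$ sits strictly between the preimages of ${_{1\mathbf{U}}y_1}$ and ${_{1\mathbf{U}}y_n}$. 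Using the small isoterms---which guarantee that the side of a letter relative to a linear separator cannot change---together with the analogue of Lemma~\ref{samelabels} for first occurrences, I would conclude that ${_{1}y_1}$ stays to the left and ${_{1}y_n}$ to the right of the image of $t$ in $\Theta(\mathbf{v})$, so that $g_{\mathbf{U}_n,\mathbf{U}}(X)$ is $g_{\mathbf{U},\Theta(\mathbf{v})}$-stable. As $\mathfrak{X}=\emptyset$, Condition~(III) holds and Lemma~\ref{nfblemma1} gives that $S$ is non-finitely based.

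I expect the main obstacle to be precisely the step where~(ii) is invoked. Because $S$ is not a monoid and occurrence counts survive only modulo~$k$, none of the monoid tools (stability of linear variables from $xy$ being an isoterm, Fact~\ref{linst}, and the like) are available off the shelf; they must all be rederived, and the only leverage against the degenerate collapses is the single non-identity~(ii). Making rigorous the claim that a non-linear pigeonhole variable forces the forbidden identity---that is, exhibiting the correct substitution and the correct sequence of applications of~(i) and~(iii) that manufacture $x^k(y^kx^k)^{k+1}$ out of the repeated two-block pattern---is the delicate computation on which the whole proof turns, and it is also where dropping the extra hypothesis present in \cite{EL2} must be justified.
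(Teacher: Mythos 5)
Your high-level strategy matches the paper's: apply Lemma \ref{nfblemma1} with a good collection of maps other than $\{l_{{\bf u},{\bf v}}\}$, use a pigeonhole argument to produce a linear variable $t$ whose image $\Theta(t)$ swallows consecutive blocks $y_i^k y_{i+1}^k$, and transfer the local order through $\Theta(t)$ into $\Theta({\bf v})$. But the two load-bearing ingredients of the proof are missing from your proposal. The paper does not rederive the structural consequences of (ii) and (iii); it imports them from \cite{EL2}: Lemma \ref{P} (Lemma 9 of \cite{EL2}), which says that every identity of $S$ preserves the sets of linear and of non-linear variables, and Lemma \ref{K} (an immediate consequence of Lemma 13 of \cite{EL2}), the all-or-nothing dichotomy that if ${_{last{\bf U}}y}_i <_{\bf U} {_{1{\bf U}}y}_{i+1}$ for some $i$ then this holds for every $i$. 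You promise analogues of both (``can be shown'', ``the delicate computation on which the whole proof turns'') but never supply them, and you yourself identify this as the main obstacle. Since the surrounding steps are routine, what remains is a skeleton whose entire substance --- extracting these facts from the single non-identity (\ref{e1}) together with (\ref{e2}) --- is exactly the part left blank; either citing \cite{EL2}, as the paper does, or carrying out that computation is indispensable.

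There is also a concrete structural weakness in your choices. The paper uses the first-\emph{and}-last-occurrence maps $e_{{\bf u},{\bf v}}$ on non-linear variables and takes $X$ to be the full alternating chain $\{{_{last{\bf U}_n}y}_1, {_{1{\bf U}_n}y}_2, {_{last{\bf U}_n}y}_2, \dots, {_{1{\bf U}_n}y}_n\}$. Stability of this $X$ says precisely that all blocks of ${\bf U}$ are separated and in the original order; this is what forces the pigeonhole variable $t$ to be linear in ${\bf u}$ (two disjoint copies of $\Theta(t)$, each containing occurrences of four distinct $y_j$, would contradict block separation --- no appeal to (ii) is needed at this point), and it makes Condition (III) an immediate consequence of Lemma \ref{K} applied to ${\bf V}$, since $t$ is linear in ${\bf v}$ by Lemma \ref{P} and $\Theta(t)$, which contains \emph{all} occurrences of $y_i$ and $y_{i+1}$, survives verbatim. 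Your thinner set $X=\{{_{1{\bf U}_n}y}_1,{_{1{\bf U}_n}y}_n\}$ with first-occurrence maps records only the relative order of two first occurrences, and the dichotomy of Lemma \ref{K} concerns separation of consecutive blocks, not first-occurrence order; so your claim that $g$-stability of this pair forces the whole chain ``by an argument analogous to Claim \ref{L}'' does not go through as stated --- Claim \ref{L} relied on hypotheses (iii) and (iv) of Theorem \ref{SL1}, which are unavailable here. Without the full chain encoded in $X$, both the linearity of the pigeonhole variable and the transfer step lose their footing, and your proposed detour through an analogue of Lemma \ref{samelabels} and through manufacturing the forbidden identity from a repeated two-block pattern is again only sketched. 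In short: same skeleton as the paper, but the decisive lemmas are asserted rather than proved, and the chosen $X$ is too weak to make the assembled pieces fit.
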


We precede the proof of Theorem \ref{EL} by two lemmas from \cite{EL2}. The first lemma is similar to the first sentence in the proof of Theorem \ref{SL1}.

\begin{lemma} \label{P} \cite[Lemma 9]{EL2} Let $S$ be a semigroup that satisfies Conditions (ii) and (iii) in Theorem \ref{EL}.
If ${\bf u} \approx {\bf v}$ is an identity of $S$ then $\lin ({\bf u}) = \lin ({\bf v})$ and $\non ({\bf u}) = \non ({\bf v})$.
\end{lemma}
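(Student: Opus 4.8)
The plan is to reduce the statement, by a single collapsing substitution, to a purely two-variable question, and then to settle that question using the periodicity forced by \eqref{e2} together with the failure of \eqref{e1}. Since $\lin({\bf u})=\lin({\bf v})$ and $\non({\bf u})=\non({\bf v})$ together amount to saying that for every variable $x$ the numbers $occ_{\bf u}(x)$ and $occ_{\bf v}(x)$ fall in the same one of the three classes $\{0\}$, $\{1\}$, $\{2,3,\dots\}$, I would fix a target variable $x$ and apply the substitution $\pi$ sending $x\mapsto x$ and every other variable to one fixed letter $y$. Then $S$ satisfies $\pi({\bf u})\approx\pi({\bf v})$, a two-variable identity, and $occ_{\pi({\bf u})}(x)=occ_{\bf u}(x)$, $occ_{\pi({\bf v})}(x)=occ_{\bf v}(x)$. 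Hence it suffices to prove the following two-variable claim $(\star)$: if $\mathbf p\approx\mathbf q$ is an identity of $S$ with $\mathbf p,\mathbf q\in\{x,y\}^+$, then $occ_{\mathbf p}(x)$ and $occ_{\mathbf q}(x)$ lie in the same class among $\{0\}$, $\{1\}$, $\{2,3,\dots\}$. Applying $(\star)$ to every variable yields $\con({\bf u})=\con({\bf v})$, $\lin({\bf u})=\lin({\bf v})$ and $\non({\bf u})=\non({\bf v})$ simultaneously.

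Next I would record the elementary consequences of \eqref{e2} needed to normalize two-variable words. From $x^{k+2}\approx x^2$ one gets $x^m\approx x^{m'}$ whenever $m,m'\ge 2$ and $m\equiv m'\pmod k$, and from $x^{k+1}yx\approx xyx$, substituting $y\mapsto x^{a-1}yx^{b-1}$, one gets $x^{a+k}yx^b\approx x^ayx^b$ for all $a,b\ge 1$, together with the mirror statement coming from $xyx^{k+1}\approx xyx$. Consequently every word in $\{x,y\}^+$ is, modulo the identities of $S$, an alternating product of blocks $x^{e_1}y^{f_1}x^{e_2}\cdots$ in which each exponent may be reduced modulo $k$ while being kept $\ge 1$ (a single-letter word $x^m$ reducing with $m$ kept $\ge 2$). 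Crucially, none of these moves changes the number of maximal blocks, so they never turn a single occurrence of a letter into several, nor create or destroy a letter.

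To finish $(\star)$ I would use that $S$ does not satisfy \eqref{e1}. The two forbidden transitions are $occ_{\mathbf p}(x)=1$ with $occ_{\mathbf q}(x)\ge 2$ (a linear occurrence splitting) and $occ_{\mathbf p}(x)=0$ with $occ_{\mathbf q}(x)\ge 1$ (the content changing). In either case I would apply the substitution $x\mapsto x^k$, $y\mapsto y^k$ and normalize both sides by the previous paragraph, so that the side with fewer $x$'s becomes a single block $x^k$ while the other side carries at least two $x^k$-blocks separated by $y^k$-blocks. The model case makes the mechanism transparent: an identity such as $x\approx xyx$ yields, after $x\mapsto x^k$, $y\mapsto y^k$, the relation $x^k\approx x^ky^kx^k$, which says that appending the block $y^kx^k$ is free; iterating it gives $x^k\approx x^k(y^kx^k)^{r}$ for every $r\ge 1$, and comparing $r=0$ with $r=k+1$ produces exactly $x^ky^kx^k\approx x^k(y^kx^k)^{k+1}$, contradicting \eqref{e1}. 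The general case follows the same template: from the offending identity one extracts an ``idempotent'' relation of the shape $x^k\approx x^k(y^kx^k)^{r}$ (or a left-handed variant) and iterates it to reach \eqref{e1}.

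The hard part will be this last extraction in full generality, where the extra occurrences of $x$ in $\mathbf q$ may be buried among arbitrary $y$-blocks: one must peel off the surrounding material by further substitutions and by left and right multiplications so as to isolate a genuinely iterable relation whose block count can be driven, using the mod-$k$ periodicity, to precisely $k+1$. The difficulty is aggravated by the fact, noted after the statement of Theorem \ref{EL}, that $S$ is only a semigroup and not a monoid, so no variable may simply be deleted; every reduction has to be realized as an honest substitution into powers such as $x^k$, chosen carefully so as not to disturb the single occurrence of $x$ that is being tracked.
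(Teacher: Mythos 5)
First, a point of comparison: the paper does not prove this lemma at all — it is imported verbatim as \cite[Lemma 9]{EL2} — so your attempt has to be judged against what a complete proof requires rather than against an in-paper argument. Your opening reduction (collapse all variables except a target $x$ to a single letter $y$, so that it suffices to prove the two-variable claim $(\star)$) is sound and is the natural first move, and your derived normalization rules from \eqref{e2} are correct as far as they go. But the step you defer as ``the hard part'' is not a routine completion; as sketched, it provably cannot succeed. From a violating identity your moves yield, at best, relations of the form $Q_1 \approx Q_s$ with $s \ge 2$, where $Q_N = y^k(x^ky^k)^N$. Iterating such a relation (appending $(x^ky^k)^{s-1}$, or re-substituting with images whose block counts multiply both sides by the same factor) changes the alternation number only in steps that are multiples of $s-1$, while your exponent-reduction moves — as you yourself stress (``crucially, none of these moves changes the number of maximal blocks'') — leave block counts fixed. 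Hence your route reaches the target $Q_{k+1}$ only when $(s-1)\mid k$: for instance $k=4$ with the violating identity $yxy \approx yx^4y$ (so $s=4$) defeats the plan as written, since $1+3j=5$ has no solution. There is also the concrete unaddressed problem you half-acknowledge: after $x\mapsto x^k$, $y\mapsto y^k$ the side with one occurrence of $x$ is $y^{ka}x^ky^{kb}$, not ``a single block $x^k$'', and in a semigroup the bordering $y$-powers cannot be stripped. (Minor slip: in your model case you must compare $r=1$ with $r=k+1$, not $r=0$ with $r=k+1$, to land on \eqref{e1} exactly.)

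Two ideas close the gap, and the second contradicts your guiding assumption. First, instead of peeling material away, isolate it: substitute $x \mapsto y^kx^ky^k$ and $y \mapsto y^k$. Every image of $x$ is then buffered by $y^k$ on both sides, so after reducing all block exponents to $k$ (legitimate, since every block exponent is a multiple of $k$ and $\ge 2$, so $z^{ck}\approx z^k$ applies to it as a subword) a content violation becomes $y^k \approx y^k(x^ky^k)^p$ and a linearity violation becomes $Q_1 \approx Q_s$ with $s=occ_{\mathbf q}(x)\ge 2$, uniformly, no matter how the extra occurrences of $x$ are buried among $y$-blocks. Second, the identities \eqref{e2} do change block counts when applied through composite substitutions: putting $x \mapsto x^ky^kx^k$ into $x^{k+2}\approx x^2$ gives $Q_2 \approx Q_{2+k}$ (after exponent normalization), and putting $x \mapsto x^ky^kx^k$, $y \mapsto \mathbf s$ into $x^{k+1}yx \approx xyx$ gives $Q_N \approx Q_{N+k}$ for all $N \ge 3$; so the number of blocks is invariant only at $N=1$ — and Condition (ii) is exactly the statement that it stays invariant there. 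With these two tools the derivation finishes: from $Q_1 \approx Q_s$, appending $k$ times gives $Q_1 \approx Q_{1+k(s-1)}$, whose index is $\equiv 1 \pmod k$ and $\ge k+1$, and the composite pumping $Q_N \approx Q_{N+k}$ ($N\ge 2$) drives it down to exactly $Q_{k+1}$; thus $Q_1 \approx Q_{k+1}$, which is \eqref{e1} up to renaming the two variables, contradicting Condition (ii) (the content case is identical after left-multiplying $y^k \approx y^k(x^ky^k)^{kp}$ by $y^kx^k$). So the verdict is a genuine gap: your outline identifies the right framework, but the two substitution tricks above are the actual content of the proof, and the mechanism you propose in their place cannot reach \eqref{e1} in general.
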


Lemma 13 in \cite{EL2} describes the equivalence class of $[\![{\bf U}_n]\!]_S$. The following lemma is an immediate consequence of Lemma 13 in \cite{EL2}.

\begin{lemma} \label{K} Let $S$ be a semigroup that satisfies Conditions (ii) and (iii) in Theorem \ref{EL}.
Let $n>2$ and ${\bf U} \in [\![{\bf U}_n]\!]_S$.
If for some $1 \le i <n$ we have ${_{last{\bf U}}y}_i <_{\bf U} {_{1{\bf U}}y}_{i+1}$  then  for each $1 \le i <n$ we have ${_{last{\bf U}}y}_i <_{\bf U} {_{1{\bf U}}y}_{i+1}$.
\end{lemma}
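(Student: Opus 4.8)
The plan is to reduce the statement to the description of the equivalence class $[\![{\bf U}_n]\!]_S$ and then read off a dichotomy. Observe first that for each index the relation ${_{last{\bf U}}y}_i <_{\bf U} {_{1{\bf U}}y}_{i+1}$ asserts exactly that the entire block of occurrences of $y_i$ precedes the entire block of occurrences of $y_{i+1}$ in ${\bf U}$. Since erasing a variable preserves the left-to-right order of the surviving occurrences, this relation is equivalent to saying that, in the word $D_x({\bf U})$ obtained by deleting all occurrences of $x$, the block of $y_i$'s comes before the block of $y_{i+1}$'s. Thus the lemma will follow as soon as we know that in $D_x({\bf U})$ the $n$ blocks appear solidly and in one of only two global orders.

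To obtain this I would invoke the description of $[\![{\bf U}_n]\!]_S$ furnished by Lemma 13 of \cite{EL2}. The role of Condition (iii) of Theorem \ref{EL} is to pin down the powers inside each block, so that each $y_i$ survives as a single solid power $y_i^{a_i}$ rather than being split or shuffled, while the role of Condition (ii) is to forbid the alternation $x^ky^kx^k \approx x^k(y^kx^k)^{k+1}$ that would be needed to break a block of $y_i$'s apart and interleave it past the copies of $x$ or past a neighbouring block. Together with Lemma \ref{P}, which guarantees that no variable changes its linear/non-linear status, these conditions force $D_x({\bf U})$ to be either the forward arrangement $y_1^{a_1}y_2^{a_2}\cdots y_n^{a_n}$ or the reverse arrangement $y_n^{a_n}\cdots y_2^{a_2}y_1^{a_1}$; the two possibilities correspond precisely to the two sides ${\bf U}_n$ and ${\bf V}_n$ identified by Condition (i).

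With this description in hand the conclusion is immediate. In the forward arrangement the block of $y_i$'s precedes the block of $y_{i+1}$'s for every $1 \le i < n$, so the relation ${_{last{\bf U}}y}_i <_{\bf U} {_{1{\bf U}}y}_{i+1}$ holds for all $i$; in the reverse arrangement the block of $y_{i+1}$'s precedes that of $y_i$'s for every $i$, so the relation fails for all $i$. Since the hypothesis supplies a single index for which the relation holds, ${\bf U}$ cannot be in the reverse case, hence it is in the forward case and the relation holds for every $1 \le i < n$. The substantive content therefore lies entirely in the description of $[\![{\bf U}_n]\!]_S$, and the main obstacle is to confirm that Lemma 13 of \cite{EL2} really yields a genuinely \emph{global} monotonicity of the block order rather than a merely local adjacency condition: one must rule out a mixed word in which the first few blocks are forward while later blocks are shuffled, and this is exactly the point at which Condition (ii) is essential, since any such mixing would demand the forbidden alternation. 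If a self-contained argument were preferred one could instead follow the method of \cite{EL2} and propagate the block order from one adjacent pair to the next by examining ${\bf U}$ restricted to $\{x, y_i, y_{i+1}, y_{i+2}\}$, at the cost of a routine but lengthier case analysis.
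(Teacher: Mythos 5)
Your proposal follows the paper's route exactly: the paper's entire proof of Lemma \ref{K} is the single observation that it is an immediate consequence of the description of $[\![{\bf U}_n]\!]_S$ given by Lemma 13 of \cite{EL2}, and you rest on the same citation, merely unpacking why the consequence is immediate (the blocks are solid, and since the block positions are totally ordered, ``no adjacent pair $y_i, y_{i+1}$ in forward order'' forces the full reversal by transitivity, so mixed arrangements are excluded). The self-contained propagation argument you sketch as a fallback is therefore unnecessary, since the global forward/reverse dichotomy is exactly the content of the cited description, which requires only Conditions (ii) and (iii) of Theorem \ref{EL}.
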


Let $e_{{\bf u}, {\bf v}}$ be a map from $\{ _{1{\bf u}}x, \ _{last{\bf u}}x \mid x \in \non({\bf u}) \cap \non({\bf v}) \}$ to $\{ _{1{\bf v}}x, \ _{last{\bf v}}x \mid x \in \non({\bf u}) \cap \non({\bf v}) \}$ defined by $e_{{\bf u}, {\bf v}} (_{1{\bf u}}x) = {_{1{\bf v}}x}$ and $e_{{\bf u}, {\bf v}} (_{last{\bf u}}x) = {_{last{\bf v}}x}$. For example, if ${\bf u} = zxyzyyx$ and ${\bf v} = yxxyxpp$ then $e_{{\bf u}, {\bf v}}$ is a bijection between $\{_{1{\bf u}}x, {_{2{\bf u}}x}, {_{1{\bf u}}y}, {_{3{\bf u}}y} \}$ and $\{_{1{\bf v}}x, {_{3{\bf v}}x}, {_{1{\bf v}}y}, {_{2{\bf v}}y} \}$. The set $\{_{1{\bf u}}x, {_{1{\bf u}}y} \}$
is $e_{{\bf u}, {\bf v}}$ - unstable in ${\bf u} \approx {\bf v}$ but the set $\{_{2{\bf u}}x, {_{3{\bf u}}y} \}$
is $e_{{\bf u}, {\bf v}}$ - stable in ${\bf u} \approx {\bf v}$.

It is easy to check that the set $\{ e_{{\bf u}, {\bf v}} \mid ({\bf u} \approx {\bf v}) \in \mathfrak A^+ \times \mathfrak A^+, \non({\bf u}) \cap \non({\bf v}) \ne \emptyset \}$ satisfies all three conditions of Definition \ref{trans}, and consequently is a good collection of maps.

{\bf Proof of Theorem \ref{EL}.}

Take $n>4$ and consider the identity ${\bf U}_n \approx {\bf V}_n$ and set

 $X=\{{_{last{\bf U}_n}y}_1, {_{1{\bf U}_n}y}_2, {_{last{\bf U}_n}y}_2, {_{1{\bf U}_n}y}_3, {_{last{\bf U}_n}y}_3, {_{1{\bf U}_n}y}_4, \dots, {_{last{\bf U}_n}y}_{n-1},  {_{1{\bf U}_n}y}_n \}$. We choose $\mathfrak X$ to be the empty set of variables.

First notice that the map $e_{{\bf U}_n, {\bf U}_n}$ is defined on $X$ because $\non({\bf U}_n) = \con({\bf U}_n)$.
Evidently, the set $X$ is  $e_{{\bf U}_n, {\bf V}_n}$-unstable in ${\bf U}_n \approx {\bf V}_n$.

Let us check the third condition of Lemma \ref{nfblemma1}. Let ${\bf U} \in [\![{\bf U}_n]\!]_S$ be a word such that the set $X$ is  $e_{{\bf U}_n, {\bf U}}$-stable in ${\bf U}_n \approx {\bf U}$.

Let ${\bf u}$ be a word in less than $n/4$ variables such that $\Theta({\bf u})={\bf U}$ for some substitution $\Theta: \mathfrak A \rightarrow \mathfrak A^+$.
Since the word ${\bf u}$ has less than $n/4$ variables, for some variable $t \in \con({\bf u})$ the word $\Theta({\bf u})$ contains occurrences of at least four distinct variables in $ \{y_1, y_2, \dots, y_n\}$. Since the set $X$ is  $e_{{\bf U}_n, {\bf U}}$-stable in ${\bf U}_n \approx {\bf U}$, the variable $t$ is linear in $\bf u$ and for some $1 \le i < n-1$ the interval $\Theta_{\bf u}(t)$ contains the following substructure:

 $\{ ({_{1{\bf U}}y}_i)  <_{\bf U} ({_{last{\bf U}}y}_i)  <_{\bf U}  ({_{1{\bf U}}y}_{i+1})   <_{\bf U} ({_{last{\bf U}}y}_{i+1}) \}$.

Now let $\bf v$ be an arbitrary word for which $S$ satisfies the identity ${\bf u} \approx {\bf v}$ and ${\bf V} = \Theta({\bf v})$.
In view of Lemma \ref{P}, the variable $t$ is linear in $\bf v$.
Therefore, the interval $\Theta_{\bf v}(t)$ contains the following substructure:
$\{ ({_{1{\bf V}}y}_i)  <_{\bf V} ({_{last{\bf V}}y}_i)  <_{\bf V}  ({_{1{\bf V}}y}_{i+1})   <_{\bf V} ({_{last{\bf V}}y}_{i+1}) \}$.

So, by Lemma \ref{K} the set $e_{{\bf U}_n, {\bf U}}(X)$ is $e_{{\bf U}, {\bf V}}$-stable in ${\bf U} \approx {\bf V}$. Therefore, the monoid $S$ is non-finitely based  by Lemma \ref{nfblemma1}. $\Box$

\vskip .2in

 Theorem 1 in \cite{EL2} is obtained from Theorem 8 in \cite{WTZ1} by extracting the properties of the semigroup $L$ which are responsible for $L$ being non-finitely based by way of the set of identities $\{xy^2_1y^2_2 \dots y^2_nx  \approx  xy^2_ny^2_{n-1} \dots y^2_1x \mid  n>1 \}$. Since Theorem \ref{EL} reduces the number of these properties, it immediately implies the following.

\begin{cor} \label{WT1} \cite[Theorem 8]{WTZ1} The six-element semigroup $L= \langle a,b \mid aa=a, bb=b, aba=0 \rangle$ is non-finitely based.
\end{cor}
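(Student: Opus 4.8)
The plan is to derive Corollary \ref{WT1} as a direct application of Theorem \ref{EL} with the fixed integer $k=2$. This choice is forced by the history: the system that makes $L$ non-finitely based in \cite[Theorem 8]{WTZ1} is exactly $\{\, xy_1^2y_2^2\cdots y_n^2x \approx xy_n^2\cdots y_2^2y_1^2x \mid n>1\,\}$, which is Condition (i) of Theorem \ref{EL} for $k=2$. I first record that $L$ is an eligible candidate because it is a semigroup with no identity element; as noted after the statement of Theorem \ref{EL}, no monoid can satisfy all three of its hypotheses, so $L$ is exactly the kind of object the theorem was designed to handle. The whole argument therefore reduces to verifying Conditions (i)--(iii) for $S=L$ and $k=2$, after which the conclusion is immediate.

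The single structural fact driving every verification is that each element $s$ of $L=\{a,b,ab,ba,bab,0\}$ is either idempotent ($a$, $b$, $0$) or satisfies $s^2=0$ (for $ab$, $ba$, $bab$, using $(ab)^2=(aba)b=0$, $(ba)^2=b(aba)=0$ and $(bab)^2=babab=0$). From this, $s^4=s^2$ for every $s$, so $L$ satisfies $x^4\approx x^2$. For the two remaining identities $x^3yx\approx xyx$ and $xyx^3\approx xyx$ of Condition (iii), I would observe that $s^3=s$ when $s$ is idempotent, so it only remains to check that $s\,t\,s=0$ whenever $s^2=0$; a short case analysis over $s\in\{ab,ba,bab\}$ and $t\in L$ confirms this, since any such product, written in the generators and reduced, either contains the factor $aba$ or becomes an alternating word of length at least four, hence is $0$. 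This settles Condition (iii). Condition (ii) asks that $L$ \emph{fail} the identity $x^2y^2x^2\approx x^2(y^2x^2)^3$, and the witnessing substitution is $x\mapsto b$, $y\mapsto a$: the left side is $b\,a\,b=bab\neq 0$, whereas on the right $(ab)^3=0$ forces the value $0$.

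Finally, Condition (i) is precisely the family of reversal identities established in \cite[Theorem 8]{WTZ1}, so it may simply be quoted; alternatively I would verify it directly by substitution, noting that if any $y_i$ is sent to a non-idempotent then both sides evaluate to $0$, while if every $y_i$ is sent to $a$ or $b$ then both sides collapse to $s\,p\,s$ and $s\,p'\,s$, where $p$ and $p'$ are the reduced alternating forms of the inner product and its reversal, leaving only the identity $s(ab)s=s(ba)s$ (for all $s\in L$) to be checked by hand. With (i)--(iii) in place, Theorem \ref{EL} gives that $L$ is non-finitely based. I do not expect a genuine obstacle here: all the real work has already been done in isolating the three minimal hypotheses of Theorem \ref{EL} and in the analysis of $[\![{\bf U}_n]\!]_S$ recorded in Lemmas \ref{P} and \ref{K}. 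The only non-finite check, Condition (i), is inherited from \cite{WTZ1}, and everything else is a routine computation inside the six-element semigroup; the point of the corollary is precisely that Theorem \ref{EL} makes the non-finite basis property of $L$ fall out with almost no additional effort.
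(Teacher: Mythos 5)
Your proposal is correct and follows essentially the same route as the paper: the paper derives Corollary \ref{WT1} by observing that Theorem \ref{EL} weakens the hypotheses of Lee's Theorem 1 in \cite{EL2}, which was itself distilled from the properties of $L$ established in \cite{WTZ1}, so the corollary is immediate. The only difference is that you verify Conditions (i)--(iii) for $k=2$ by direct computation in the six-element semigroup (all of your checks --- $s^2\in\{s,0\}$, $sts=0$ for non-idempotent $s$, the witness $x\mapsto b$, $y\mapsto a$ giving $bab\neq 0$ against $0$, and the reduction of Condition (i) to $s(ab)s=s(ba)s$ --- are accurate), whereas the paper simply cites the earlier literature for these verifications.
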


\section{A sufficient condition that implies that most Straubing's monoids are non-finitely based}

We say that a word ${\bf u}$ is a {\em scattered subword} of a word ${\bf v}$ whenever there exist words ${\bf u}_1, \dots, {\bf u}_k, {\bf v}_0, {\bf v}_1, \dots, , {\bf v}_{k-1}, {\bf v}_k \in \mathfrak A^*$ such that \vskip .1in

${\bf u} = {\bf u}_1 \dots {\bf u}_k$ and ${\bf v} = {\bf v}_0 {\bf u}_1 {\bf v}_1\dots {\bf v}_{k-1}{\bf u}_k{\bf v}_k$;
\vskip .1in
in other terms, this means that one can extract $\bf u$ treated as a sequence of letters from the sequence $\bf v$.
We denote by $J_m$ the set of all identities $({\bf u} \approx {\bf v})$ such that the words $\bf u$ and $\bf v$ have the same set of scattered subwords of length $\le m$.

We say that a set of identities $\Sigma$ is {\em finitely based} if all identities in $\Sigma$ can be derived from a finite subset of identities in $\Sigma$.
This section as well as Section 7 in \cite{OS3} is motivated by the following result of F. Blanchet-Sadri.

\begin{lemma} \cite[Theorem 3.4]{BS} \label{b-s1} The set of identities $J_m$ is finitely based if and only if $m \le 3$.

\end{lemma}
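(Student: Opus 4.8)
The plan is to prove the two implications separately: the negative direction (for $m \ge 4$ the set $J_m$ is \emph{not} finitely based) with the machinery of Section 2, and the positive direction ($m \le 3$) by exhibiting an explicit finite basis. First I would record the observations that make the method applicable. Since the length-$1$ scattered subwords of a word are exactly its letters, every identity in $J_m$ is regular, and $\sim_m$ (sharing the same scattered subwords of length $\le m$) is a fully invariant congruence on $\mathfrak A^+$; hence $J_m$ is the deductively closed equational theory of the relatively free semigroup $S := \mathfrak A^+/{\sim_m}$. For this $S$ we have $[\![{\bf u}]\!]_S = \{{\bf v} : {\bf v} \sim_m {\bf u}\}$, and ``${\bf u}$ is an isoterm for $S$'' means precisely that ${\bf u}$ is the unique word in its $\sim_m$-class. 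Because $J_m$ is deductively closed, ``$J_m$ is finitely based as a set of identities'' coincides with ``$S$ is finitely based'', so Corollary \ref{nfbcor} and Lemma \ref{nfblemma1} apply directly.

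For the negative direction, fix $m \ge 4$ and construct, for each large $n$, a pair ${\bf U}_n \approx {\bf V}_n \in J_m$ on at least $n$ variables whose two sides differ only by a single local transposition of two occurrences that is invisible to scattered subwords of length $\le m$ (this needs the transposed letters, and the letters separating them, to already occur in both orders with enough multiplicity, which is why the construction first becomes available at level $4$). I would then verify the hypothesis of Corollary \ref{nfbcor}: every word ${\bf u}$ in fewer than $n/2$ variables with $\Theta({\bf u}) = {\bf U}_n$ for some substitution $\Theta$ is an isoterm for $S$. The point is that collapsing ${\bf U}_n$ to so few variables forces some variable of ${\bf u}$ to absorb a long interval $\Theta_{\bf u}(c)$ of ${\bf U}_n$, so ${\bf u}$ is then so sparse (each of its variables occurring too few times to admit any nontrivial $\sim_m$-rearrangement) that it is alone in its $\sim_m$-class. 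Equivalently one may apply Lemma \ref{nfblemma1} with the good collection of maps $\{l_{{\bf u}, {\bf v}}\}$, with $k=1$, and with $X$ the pair of transposed occurrences, tracking only the relative order of that pair; this is exactly the schema executed in Theorems \ref{SL1} and \ref{nfbpairs}, where a short identity cannot disturb the order of a marked pair because any short preimage word becomes an isoterm once the linear separator variable is reinserted.

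For the positive direction I would exhibit finite bases for $m = 1, 2, 3$. For $m=1$, $\sim_1$ identifies words with equal content, so $J_1$ is axiomatized by the semilattice laws $xy \approx yx$ and $x^2 \approx x$. For $m = 2$ and the delicate case $m = 3$ I would write down the finite sets of defining identities of Blanchet--Sadri and prove completeness by induction on word length: each basis identity is a $\sim_m$-preserving rewriting rule, and one shows that any two $\sim_m$-equivalent words can be connected by a bounded sequence of such rewritings into a common canonical form. The genuine content is that at level $3$ a bounded rule set reaches a canonical form, whereas this fails at level $\ge 4$ precisely because of the unbounded-variable identities produced in the negative direction.

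The main obstacle is the negative direction: designing the family ${\bf U}_n \approx {\bf V}_n$ so that (a) the distinguishing transposition is genuinely invisible to \emph{all} scattered subwords of length $\le m$, which requires a careful count of exactly which length-$\le m$ subwords the swap creates or destroys, and (b) every preimage word on fewer than $n/2$ variables is forced to be an isoterm. Both are elementary but bookkeeping-heavy statements about scattered subwords and about the intervals $\Theta_{\bf u}(c)$; once they are in place, the non-finite basis conclusion is immediate from Corollary \ref{nfbcor}. Pinning the threshold exactly at $4$ then amounts to checking that the construction already succeeds for $m = 4$ and that the level-$3$ basis of the positive direction is complete.
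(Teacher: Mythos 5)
Your framing is sound as far as it goes: $\sim_m$ is a fully invariant congruence, so finite basability of $J_m$ is finite basability of a semigroup, and the machinery of Section 2 applies (the paper, via Volkov's result quoted in Section 6, works with the finite Straubing monoids $S_{m+1}$ whose equational theory is $J_m$, rather than with the relatively free quotient). But note that the paper does not prove Lemma \ref{b-s1} at all --- it is quoted from Blanchet-Sadri --- and what the paper supplies on its own is an independent proof of the ``only if'' half (Corollary \ref{Em}, via the new sufficient condition Theorem \ref{BSnew}), with the ``if'' half explicitly deferred to \cite{BS} and \cite{OS3}. Your positive direction is in the same position: for the delicate case $m=3$, ``write down Blanchet-Sadri's identities and prove completeness by induction to a canonical form'' restates the hard theorem rather than sketching a proof of it.

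The genuine gap is in your negative direction. You never exhibit the family ${\bf U}_n \approx {\bf V}_n$, and the shape you posit --- a single transposition of two occurrences, invisible to scattered subwords --- is not how the known witnesses for $J_m$ look: Blanchet-Sadri's identities (Lemma \ref{b-s}) and the paper's (Theorem \ref{BSnew}(i), ${\bf U}_n = x^{m-2}[Yn]x[nY]x \approx x^{m-1}[Yn]x[nY]x$) differ in the \emph{multiplicity} of $x$, so in Lemma \ref{nfblemma1} the unstable datum is the variable $x \in \mathfrak X$, not the order of a marked pair $X$. More seriously, your verification of the hypothesis of Corollary \ref{nfbcor} rests on the claim that a preimage in few variables is ``so sparse \dots\ that it is alone in its $\sim_m$-class.'' That principle is false: $x^m \sim_m x^{m+1}$, so even a one-variable word need not be an isoterm, and the entire subtlety of the threshold is exactly which sparse words are isoterms. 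Concretely, $x^{m-2}t_1xt_2x$ is an isoterm for $S_{m+1}$ precisely when $m \ge 4$ (checked in Corollary \ref{Em} via the scattered subword $t_1x^2t_2$, which has length $4 \le m$), whereas $S_4$ satisfies $xt_1xt_2x \approx xt_1xxt_2x$; the paper remarks that this single failure is what leaves $m=3$ finitely based. Your explanation of the threshold (letters needing to ``occur in both orders with enough multiplicity'') does not match this pivot and is unsupported. The paper's route also shows why demanding full isoterm-ness of preimages is the wrong tool: Theorem \ref{BSnew} never requires the preimage ${\bf u}$ to be an isoterm, only that each variable in $\Theta^{-1}(x)$ be \emph{stable} (via the isoterms $x^{m-1}$ and $x^{m-2}t_1xt_2x$ in its Cases 1 and 2), combined with the rigidity of Condition (iv) and Claim \ref{mclass}, which pins down ${\bf U} = {\bf U}_n$ among words of its class with $occ_{\bf U}(x)=m$. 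Until you specify a concrete family of identities in $J_m$ and carry out the scattered-subword bookkeeping for it, the heart of the lemma is missing.
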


 In \cite{MV1}, M. Volkov proved that for each $m \ge 1$, the set $J_m$ is the equational theory of each of the following (finite) monoids $S_{m+1}$:

$\bullet$ The monoid of all reflexive binary relations on a set with $m+1$ elements;

$\bullet$ The monoid of all $(m+1) \times (m+1)$ upper triangular matrices over the Boolean semiring;

$\bullet$ The monoid of all order preserving and extensive transformations of a chain with $m+1$ elements.

M. Volkov refers to these monoids as Straubing monoids, because these monoids were studied by H. Straubing \cite{St} in connection to the celebrated Simon's
theorem \cite{Si} about the finite monoids which recognize the piecewise testable languages.
In view of the famous Eilenberg correspondence (\cite{El}, see also \cite{Pin}), Theorem 2 in \cite{MV1} says that each Straubing monoid $S_{m+1}$ generates the pseudovariety of piecewise $m$-testable languages.

In view of the result of Volkov, the ``only if" part of Lemma \ref{b-s1} can be reformulated as follows.

\begin{lemma} \cite{BS} \label{b-s} For each $m \ge 4$ each of the three Straubing monoids $S_{m+1}$ is non-finitely based by way of $\{x^{m-2}  [ZYn] x [Yn][Zn] x \approx x^{m-1}  [ZYn] x [Yn][Zn] x \mid n > 1 \}$.
\end{lemma}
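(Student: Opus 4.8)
The plan is to deduce the statement from Lemma \ref{nfblemma1}, using the occurrence count of the single ``heavy'' variable $x$ as the distinguishing property; I will also indicate the shorter route through \cite{BS}. Write ${\bf U}_n = x^{m-2}[ZYn]x[Yn][Zn]x$ and ${\bf V}_n = x^{m-1}[ZYn]x[Yn][Zn]x$, and recall from the result of M. Volkov recalled above that the equational theory of $S_{m+1}$ is $J_m$, so that an identity holds in $S_{m+1}$ if and only if its two sides have the same scattered subwords of length $\le m$. In $J_m$ we have $x^{m-1} \approx x^k$ only for $k = m-1$, so $x^{m-1}$ is an isoterm for $S_{m+1}$, and $xy$ is an isoterm as well; by Fact \ref{linst} these give that $S_{m+1}$ satisfies only regular identities and that every variable occurring at most $m-1$ times is stable. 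I take $\mathfrak X = \{x\}$. Since $occ_{{\bf U}_n}(x) = m \ne m+1 = occ_{{\bf V}_n}(x)$, the variable $x$ is unstable in ${\bf U}_n \approx {\bf V}_n$, which is Condition (II) of Lemma \ref{nfblemma1}.

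First I would check that ${\bf U}_n \approx {\bf V}_n$ is indeed an identity of $S_{m+1}$ for every $n$, i.e. that the two words have the same scattered subwords of length $\le m$. They differ only in the leading block ($x^{m-2}$ versus $x^{m-1}$), so every scattered subword of ${\bf U}_n$ is one of ${\bf V}_n$. For the converse within length $\le m$, the point is that each $y_i$ and each $z_j$ occurs twice in ${\bf U}_n$, and its later occurrence (inside $[Yn]$, resp. $[Zn]$) is preceded by exactly $m-1$ occurrences of $x$; hence any length-$\le m$ scattered subword that needs at most $m-1$ leading $x$'s before a non-$x$ letter is realizable in ${\bf U}_n$ by routing through these later occurrences, while using $m$ leading $x$'s before a non-$x$ letter would give length $> m$. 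The purely-$x$ subwords of length $\le m$ are $\{x, \dots, x^m\}$ in both words. Thus ${\bf U}_n \approx {\bf V}_n \in J_m$.

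The decisive step is Condition (III): for ${\bf U} \in [\![{\bf U}_n]\!]_S$ with $occ_{\bf U}(x) = m$ and with the marker set $X$ below kept stable, no identity ${\bf u} \approx {\bf v}$ of $S_{m+1}$ in fewer than $n/4$ variables, pulled back by a substitution $\Theta$ with $\Theta({\bf u}) = {\bf U}$, can alter $occ(x)$. Indeed $occ_{\Theta({\bf v})}(x) - occ_{\bf U}(x) = \sum_w (occ_{\bf v}(w) - occ_{\bf u}(w)) \, occ_{\Theta(w)}(x)$, so a change would require a variable $w$ that is unstable in ${\bf u} \approx {\bf v}$ and has $x \in \con(\Theta(w))$. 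As $x^{m-1}$ is an isoterm, such a $w$ occurs at least $m$ times in ${\bf u}$; combined with $occ_{\bf U}(x) = m$ this forces $occ_{\bf u}(w) = m$, $occ_{\Theta(w)}(x) = 1$, and that $w$ is the only variable of ${\bf u}$ whose image meets $x$, so that all $m$ occurrences of $x$ in ${\bf U}$ come one apiece from the $m$ images of $w$. I would then take $X$ to consist of the occurrences of $x$ bounding the three groups of $x$'s in ${\bf U}_n$ (the block $x^{m-2}$, the isolated middle $x$, and the final $x$) together with the neighbouring occurrences of $z_1$, $y_1$ and $z_n$ that separate these groups; keeping $X$ stable transports this three-context picture from ${\bf U}_n$ to ${\bf U}$. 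The middle and final $x$ then sit between occurrences of variables that, in ${\bf u}$, are forced to occur only once in the relevant region, so that the putative $w$-count change realises a sub-identity of the form $w^a (\dots) \approx w^{a'} (\dots)$ altering the number of $w$'s before a once-occurring letter; such an identity is not in $J_m$, a contradiction. Hence $occ(x)$ is preserved and, by Lemma \ref{samelabels}, so is the order of $X$; Lemma \ref{nfblemma1} then yields that $S_{m+1}$ is non-finitely based.

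The hard part is the last paragraph: pinning down the marker set $X$ precisely and verifying that its stability genuinely prevents a $(<n/4)$-variable word from folding the three $x$-contexts of ${\bf U}$ together. This is exactly the combinatorial content isolated by Blanchet-Sadri, and the analysis is delicate because, unlike in most other applications in this paper, the relevant non-collapsing phenomenon is governed by the double occurrences of the $y_i$ and $z_j$ rather than by a single isoterm. For this reason I would also record the short alternative: since the displayed identities are precisely the witnesses of \cite{BS}, the statement follows at once from the ``only if'' half of Lemma \ref{b-s1} together with Volkov's identification of the equational theory of $S_{m+1}$ with $J_m$.
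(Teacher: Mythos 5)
Your proposal is really two arguments, and they fare very differently against the paper. The complete one is your closing ``short alternative'': note that the paper itself does \emph{not} prove Lemma \ref{b-s} at all --- it introduces it with the words ``In view of the result of Volkov, the `only if' part of Lemma \ref{b-s1} can be reformulated as follows,'' i.e.\ the lemma is a quotation of Blanchet-Sadri's result, with the displayed identities being exactly the witnesses from the proof of Theorem 3.4 in \cite{BS}, recast through Volkov's theorem \cite{MV1} that $J_m$ is the equational theory of each $S_{m+1}$. So your final paragraph coincides with the paper's own treatment, and your caveat is correctly placed: the ``by way of'' clause needs the witnesses from the \emph{proof} in \cite{BS}, not merely the statement of Lemma \ref{b-s1}, which is why the attribution to \cite{BS} carries the load.

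Your main direct attempt via Lemma \ref{nfblemma1}, by contrast, has a genuine gap, which you yourself flag: the marker set $X$ is never actually defined, and the rigidity you need --- that any ${\bf U} \in [\![{\bf U}_n]\!]_{S_{m+1}}$ with $occ_{\bf U}(x)=m$ and $X$ stable retains the three-context structure of ${\bf U}_n = x^{m-2}[ZYn]x[Yn][Zn]x$ --- is asserted, not proved. This is not a routine verification: here $D_x({\bf U}_n)=[ZYn][Yn][Zn]$, whose control involves the interleaved double occurrences of the $y_i$ and $z_j$ (patterns $z_iy_iy_iz_i$, $y_iz_jy_iz_j$, \dots) rather than a single short isoterm, and taming $[\![{\bf U}_n]\!]$ is precisely Blanchet-Sadri's combinatorial work, which neither you nor the paper reproduces. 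A concrete symptom that the sketch is incomplete: your decisive step ``such an identity is not in $J_m$'' is exactly where $m \ge 4$ must enter, since for $m=3$ the critical configuration \emph{is} in $J_3$ (the paper notes $S_4$ satisfies $xt_1xt_2x \approx xt_1xxt_2x$, and $S_4$ is finitely based); an argument that never invokes $m\ge 4$ at this point cannot be correct as it stands. It is instructive that when the paper wants a self-contained proof that $S_{m+1}$ is NFB (Corollary \ref{Em}), it deliberately switches to the \emph{different} identities $x^{m-2}[Yn]x[nY]x \approx x^{m-1}[Yn]x[nY]x$ of Theorem \ref{BSnew}: there $D_x({\bf U}_n)=[Yn][nY]$ is an isoterm as soon as $xyyx$ is, which yields the rigidity ${\bf U}={\bf U}_n$ (Claim \ref{mclass}) with $\mathfrak X=\{x\}$ and no auxiliary set $X$ needed, and the hypothesis $m\ge 4$ enters visibly through the isoterm $x^{m-2}t_1xt_2x$. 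To repair your direct route, either carry out the \cite{BS}-style analysis of $[ZYn][Yn][Zn]$ in full, or transplant your argument to the identities of Theorem \ref{BSnew}; as written, only the citation route proves the stated lemma, and that is also all the paper does.
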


Lemma \ref{b-s} motivates the following sufficient condition under which a monoid is non-finitely based.

\begin{theorem} \label{BSnew} Let $m \ge 3$ be any fixed integer. Let $S$ be a monoid such that

(i) $S$ satisfies the identity
\[{\bf U}_n = x^{m-2} [Yn]x[nY]x \approx x^{m-1} [Yn]x[nY]x = {\bf V}_n\] for each $n>1$;

(ii) the words $xyyx$, $x^{m-1}$ are isoterms for $S$;

(iii) the word $x^{m-2}t_1xt_2x$ is an isoterm for $S$;

(iv) if $S$ satisfies an identity of the form ${\bf u} \approx x^{m-2}yxyx$ and $occ_{\bf u}(x)=m$ then ${\bf u} = x^{m-2}yxyx$.

Then $S$ is non-finitely based.

\end{theorem}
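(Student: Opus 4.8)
The plan is to invoke Lemma~\ref{nfblemma1} with the good collection of maps $\{l_{{\bf u},{\bf v}}\}$, with $\mathfrak X=\{x\}$, and with $k=1$, $X_1=\emptyset$; then Condition~(I) is vacuous and Condition~(II) holds because $occ_{{\bf U}_n}(x)=m\neq m+1=occ_{{\bf V}_n}(x)$, so $x$ is unstable in ${\bf U}_n\approx{\bf V}_n$. Since $x$ is a subword of the isoterm $x^{m-1}$, it is itself an isoterm, so by Fact~\ref{linst}(i) the monoid $S$ satisfies only regular identities, and by Fact~\ref{linst}(ii) (applied to $x^{m-1}$) every variable occurring at most $m-1$ times in a word is stable with respect to $S$; in particular each $y_i$, occurring twice with $2\le m-1$, is stable. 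With $X_1$ empty, the whole argument reduces to Condition~(III): for ${\bf U}\in[\![{\bf U}_n]\!]_S$ with $occ_{\bf U}(x)=m$, every identity ${\bf u}\approx{\bf v}$ of $S$ in fewer than $n/4$ variables and every $\Theta$ with $\Theta({\bf u})={\bf U}$ must satisfy $occ_{\Theta({\bf v})}(x)=m$.

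First I would pin down the class under the constraint $occ_{\bf U}(x)=m$ and show ${\bf U}={\bf U}_n$. Deleting $x$ from the identity ${\bf U}_n\approx{\bf U}$ gives $[Yn][nY]\approx D_x({\bf U})$; since every pair $y_iy_jy_jy_i\sim xyyx$ is stable, $[Yn][nY]$ is an isoterm by Fact~\ref{st1}, hence $D_x({\bf U})=[Yn][nY]$. Deleting all variables but $x$ and a single $y_i$ yields the identity ${\bf U}(x,y_i)\approx x^{m-2}y_ixy_ix$ with $occ_{{\bf U}(x,y_i)}(x)=m$, so Condition~(iv) forces ${\bf U}(x,y_i)=x^{m-2}y_ixy_ix$ for every $i$. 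Together these locate every occurrence of $x$: the first $m-2$ copies precede all first occurrences of the $y$'s, the $(m-1)$st copy lies between all first and all second occurrences, and the last copy follows all second occurrences. Hence ${\bf U}={\bf U}_n$, and Condition~(III) need only be checked with ${\bf U}={\bf U}_n$.

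For Condition~(III) I would set $\mathfrak W:=\Theta^{-1}(x)\cap\con({\bf u})$ and split on $\max_{w\in\mathfrak W}occ_{\bf u}(w)$, using $occ_{{\bf U}_n}(x)=\sum_{w\in\mathfrak W}occ_{\bf u}(w)\,occ_{\Theta(w)}(x)=m$. If every $w\in\mathfrak W$ has $occ_{\bf u}(w)\le m-1$, all of them are stable and the same sum computed for ${\bf v}$ gives $occ_{\Theta({\bf v})}(x)=m$. Otherwise some $w_0\in\mathfrak W$ has $occ_{\bf u}(w_0)\ge m$; then its contribution already reaches $m$, which forces $\mathfrak W=\{w_0\}$, $occ_{\bf u}(w_0)=m$ and $occ_{\Theta(w_0)}(x)=1$, so the $k$th copy of $w_0$ carries ${_{k{\bf U}}x}$ and $\Theta^{-1}_{\bf u}({_{k{\bf U}}x})={_{k{\bf u}}w_0}$. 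Here a pigeonhole argument (fewer than $n/4<n$ variables against the $n$ consecutive occurrences ${_{1{\bf U}}y}_1,\dots,{_{1{\bf U}}y}_n$, each pulled back by $\Theta^{-1}_{\bf u}$ to a first occurrence by Fact~\ref{thetainv}, so $n$ distinct pullbacks would need $n$ distinct variables) produces a block $\Theta_{\bf u}(c)$ containing two consecutive first occurrences ${_{1{\bf U}}y}_k,{_{1{\bf U}}y}_{k+1}$; its variable must be linear, for a second copy of its image would have to repeat $y_k$-before-$y_{k+1}$, contradicting the reversed order of the second $y$-run of ${\bf U}_n$. Calling it $t_1$, and getting a linear $t_2$ from the second run in the same way, the equality $\mathfrak W=\{w_0\}$ forces $t_1,t_2\notin\mathfrak W$, so $\Theta(t_1),\Theta(t_2)$ contain no $x$ and are trapped strictly between consecutive copies of $x$; this places ${_{\bf u}t_1}$ between ${_{(m-2){\bf u}}w_0}$ and ${_{(m-1){\bf u}}w_0}$ and ${_{\bf u}t_2}$ between ${_{(m-1){\bf u}}w_0}$ and ${_{m{\bf u}}w_0}$, whence ${\bf u}(w_0,t_1,t_2)=w_0^{m-2}t_1w_0t_2w_0$. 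This is a renaming of the isoterm $x^{m-2}t_1xt_2x$ of Condition~(iii), so ${\bf v}(w_0,t_1,t_2)=w_0^{m-2}t_1w_0t_2w_0$ as well, giving $occ_{\bf v}(w_0)=m$ and hence $occ_{\Theta({\bf v})}(x)=m$.

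In both cases $x$ is stable in ${\bf U}\approx\Theta({\bf v})$, so Condition~(III) holds and Lemma~\ref{nfblemma1} shows $S$ is non-finitely based. The hard part is the second branch, where a single variable $w_0$ supplies all $m$ occurrences of $x$: there one must manufacture the two \emph{linear} separators $t_1,t_2$ from the short word ${\bf u}$ (this is exactly where the pigeonhole and the order reversal of the second $y$-run, i.e.\ the isoterm $xyyx$, are used) and then read off the pattern $w_0^{m-2}t_1w_0t_2w_0$ so that Condition~(iii) can force $w_0$ to be stable. Establishing ${\bf U}={\bf U}_n$ in advance is what makes all these positional arguments available.
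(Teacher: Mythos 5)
Your proposal is correct and follows essentially the same route as the paper's proof: the same invocation of Lemma~\ref{nfblemma1} with $\mathfrak X=\{x\}$ and no sets $X_i$, the same key claim that $occ_{\bf U}(x)=m$ forces ${\bf U}={\bf U}_n$ (via the isoterm $[Yn][nY]$ and Condition~(iv)), the same pigeonhole production of the linear separators $t_1,t_2$ from the two $y$-runs, and the same case split on $\Theta^{-1}(x)$ ending with the isoterms $x^{m-1}$ and $x^{m-2}t_1xt_2x$. You merely spell out some steps the paper leaves terse (the derivation of ${\bf U}={\bf U}_n$ from ${\bf U}(x,y_i)=x^{m-2}y_ixy_ix$, and the exact placement of $t_1,t_2$ between consecutive occurrences of $w_0$), which is harmless.
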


\begin{proof} Since the word $x^{m-1}$ is an isoterm for $S$, the monoid $S$ satisfies only regular identities.
First, we need the following property of the equivalence class $[\![{\bf U}_n]\!]_S$.

\begin{claim} \label{mclass} If ${\bf U} \in [\![{\bf U}_n]\!]_S$ and $occ_{\bf U}(x) = m$ then ${\bf U} = {\bf U}_n$.

\end{claim}

\begin{proof} \label{bs1} Since the word $xyyx$ is an isoterm for $S$ the word $D_x({\bf U}) = [Yn][nY] =y_1y_2 \dots y_ny_n \dots y_2y_1$ is also an isoterm for $S$. The rest follows from Condition (iv).\end{proof}

Now take $n>10$ and $\mathcal X = \{ x\}$.  Evidently, the variable $x$ is unstable in ${\bf U}_n \approx {\bf V}_n$.

Let us check the third condition of Lemma \ref{nfblemma1}. Let ${\bf U} \in [\![{\bf U}_n]\!]_S$ be a word such that $occ_{\bf U}(x) = m$.
Then by Claim \ref{mclass} we have that ${\bf U} = {\bf U}_n$.

Let ${\bf u}$ be a word in less than $n/4$ variables such that $\Theta({\bf u})={\bf U}$ for some substitution $\Theta: \mathfrak A \rightarrow \mathfrak A^+$. Since the word ${\bf u}$ has less than $n/2$ variables, for some $c_1 \in \ocs({\bf u})$ and $1 \le i \le n$ both ${_{1{\bf U}}y}_{i}$ and ${_{1{\bf U}}y}_{i+1}$ are contained in $\Theta_{\bf u}(c_1)$ and for some $c_2 \in \ocs({\bf u})$ and $1 \le j \le n$ both ${_{2{\bf U}}y}_{j+1}$ and ${_{2{\bf U}}y}_{j}$ are contained in $\Theta_{\bf u}(c_2)$. Claim \ref{mclass} implies that $c_1$ and $c_2$ must be the only occurrences of some linear variables $t_1$ and $t_2$ in ${\bf u}$.

Since $\Theta^{-1}_{\bf u}$ is a homomorphism from $(\ocs({\bf U}), <_{\bf U})$ to $(\ocs({\bf u}), <_{\bf u})$,
we have that $\Theta^{-1}_{\bf u}({_{{m-2}{\bf U}}x}) \le_{\bf u} t_1 \le_{\bf u} \Theta^{-1}_{\bf u}({_{{m-1}{\bf U}}x}) \le_{\bf u} (_{\bf u}t_2) \le_{\bf u} \Theta^{-1}_{\bf u}({_{m{\bf U}}x})$.

Now let $\bf v$ be an arbitrary word for which $S$ satisfies the identity ${\bf u} \approx {\bf v}$ and ${\bf V} = \Theta({\bf v})$.
We consider two cases.

{\bf Case 1}: $\Theta^{-1}(x)$ contains some variable that occurs in $\bf u$ less than $m$ times. Since $occ_{\bf U}(x)=m$, each variable in $\Theta^{-1}(x)$ occurs in $\bf u$ less than $m$ times. Since the word $x^{m-1}$ is an isoterm for $S$, each variable in  $\Theta^{-1}(x)$
 is stable in ${\bf u} \approx {\bf v}$. Consequently, the variable $x$ is stable in ${\bf U} \approx {\bf V}$.

{\bf Case 2}: $\Theta^{-1}(x)$ contains only one variable $x$ and $occ_{\bf u}(x)=m$. Then
${\bf u}(x,t_1,t_2) = x^{m-2} t_1 xt_2 x$. Since it is an isoterm for $S$, the variable $x$ is stable in  ${\bf u} \approx {\bf v}$. Consequently, the variable $x$ is stable in ${\bf U} \approx {\bf V}$. Therefore, the monoid $S$ is non-finitely based  by Lemma \ref{nfblemma1}.
\end{proof}

The next corollary yields an alternative proof of the ``only if" part of Theorem 3.4 in \cite{BS}. (See Lemma \ref{b-s1} above.) The ``if" part of
this theorem is reproved in Proposition 4.2 and Theorem 7.2 in \cite{OS3}.

\begin{cor} \label{Em} \cite{BS}
For each $m \ge 4$ each of the three Straubing monoids $S_{m+1}$ is non-finitely based.
\end{cor}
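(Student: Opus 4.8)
The plan is to apply Theorem \ref{BSnew}, with its integer parameter taken to be $m$ (legitimate since $m \ge 4 \ge 3$), to each of the three monoids $S_{m+1}$. The key bridge is Volkov's result recalled above: for each of these monoids the equational theory is exactly $J_m$, so $S_{m+1}$ satisfies an identity ${\bf u} \approx {\bf v}$ precisely when ${\bf u}$ and ${\bf v}$ have the same scattered subwords of length $\le m$, and a word is an isoterm for $S_{m+1}$ precisely when it is the unique word with its set of scattered subwords of length $\le m$. I would verify Conditions (i)--(iv) of Theorem \ref{BSnew} entirely in this combinatorial language.

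Conditions (i) and (ii) are the routine ones. For (i), note that ${\bf U}_n$ and ${\bf V}_n$ differ only in the leading block, $x^{m-2}$ versus $x^{m-1}$; since every scattered subword of ${\bf U}_n$ is trivially one of ${\bf V}_n$, it remains to check the converse for subwords of length $\le m$. A scattered subword of ${\bf V}_n$ using at most $m-2$ of its leading $x$'s transfers verbatim to ${\bf U}_n$, while one using all $m-1$ leading $x$'s has at most one further letter and so equals $x^{m-1}$, $x^m$, or $x^{m-1}a$ with $a$ a single letter; each of these embeds into ${\bf U}_n$ because ${\bf U}_n$ still contains $m$ occurrences of $x$ and a copy of every $y_i$ after its middle $x$. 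For (ii), the words $x^{m-1}$ and $xyyx$ both have length $\le m$ (here $m \ge 4$ is used), and a short direct computation with scattered subwords shows that no other word shares their length-$\le m$ subwords.

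Conditions (iii) and (iv) are where the real work lies, and I expect (iv) to be the main obstacle, since the words $x^{m-2}t_1xt_2x$ and $x^{m-2}yxyx$ both have length $m+2 > m$ and so cannot be detected directly by a single length-$\le m$ subword. The strategy is to exploit that the non-$x$ letters occur only a bounded number of times: any word sharing the length-$\le m$ subwords must have the same content and, using subwords such as $t_1t_1$ and $t_2t_1$ (resp.\ $yyy$), exactly one $t_1$ and one $t_2$ with $t_1$ before $t_2$ (resp.\ exactly two $y$'s). One then pins down the number of $x$'s in each region delimited by these letters using short test subwords: for (iii), that $x^{m-2}t_1$ is but $x^{m-1}t_1$ is not a subword forces exactly $m-2$ initial $x$'s, that $t_1xt_2$ is but $t_1xxt_2$ is not forces exactly one $x$ between $t_1$ and $t_2$, and $t_2x$ versus $t_2xx$ forces exactly one final $x$; for (iv), writing the candidate word as $x^{a}yx^{b}yx^{c}$ with $a+b+c=m$, the fact that $x^iyx^jyx^k$ is a subword iff $i\le a$, $j\le b$, $k\le c$ lets me read off $b=c=1$ (hence $a=m-2$) by testing the triples $(0,1,1)$, $(0,2,0)$, $(0,0,2)$, each of length $4\le m$.

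Throughout, the inequality $m \ge 4$ is precisely what keeps every discriminating subword inside the length-$\le m$ window (the longest being $x^{m-1}t_1$ of length $m$, together with the length-$4$ triples for (iv)). Having verified Conditions (i)--(iv), Theorem \ref{BSnew} immediately yields that each Straubing monoid $S_{m+1}$ is non-finitely based, completing the proof.
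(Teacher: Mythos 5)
Your proposal is correct and takes essentially the same route as the paper: both apply Theorem \ref{BSnew} with parameter $m$, using Volkov's identification of the equational theory of $S_{m+1}$ with $J_m$, and verify Conditions (i)--(iv) by scattered-subword tests (your test words for (iii), e.g.\ $x^{m-1}t_1$ and $t_1x^2t_2$, are exactly the paper's). You simply spell out in full the verifications of (ii) and (iv) that the paper dismisses as easy.
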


\begin{proof} Fix $m \ge 4$. Let us check all conditions of Theorem \ref{BSnew}.

 (i) Evidently, for each $ n \ge 1$, the words
${\bf U}_n = x^{m-2} [Yn]x[nY]x$ and ${\bf V}_n = x^{m-1}[Yn]x[nY]x$ have the same set of scattered subwords of length $\le m$.

(ii) It is easy to see that the words $x^{m-1}$ and $xyyx$ are isoterms for $S_{m+1}$.

(iii) Let us check that the word $x^{m-2}t_1xt_2x$ is an isoterm for $S_{m+1}$.

Suppose that $S_{m+1}$ satisfies an identity of the form ${\bf u} \approx x^{m-2}t_1xt_2x$. Since the word $xy$ is an isoterm for $S_{m+1}$,
 we have ${\bf u} = x^p t_1 x^d t_2 x^q$ for some $p, d, q \ge 0$. Since $x^{m-2}t_1$ and $t_2x$ are subwords of $x^{m-2}t_1xt_2x$ we have
 $p={m-2}$ and $q=1$. Since $t_1xt_2$ is a subword of $x^{m-2}t_1xt_2x$ we have that $d>0$. Now if $d>1$ then $\bf u$ contains a scattered subword
 $t_1x^2t_2$ which is not a subword of $x^{m-2}t_1xt_2x$. Therefore $d=1$ and ${\bf u} = x^{m-2}t_1xt_2x$.

Condition (iv) can  be easily verified as well. So, the monoid $S_{m+1}$ is non-finitely based by Theorem \ref{BSnew}. \end{proof}

Notice that the monoid $S_{4}$ fails Condition (iii) of Theorem \ref{BSnew}, because it satisfies the identity $xt_1xt_2x \approx xt_1xxt_2x$.
Despite the fact that $S_{4}$ fails only one out of four conditions of Theorem \ref{BSnew}, it is finitely based \cite{BS} (see Lemma \ref{b-s1} above).

\section{New examples of finitely based finite aperiodic monoids whose direct product is non-finitely based}

In 1981, M. Sapir (see \cite{MS2} for exact references) invented a construction of semigroup varieties which produced negative answers to ten traditionally asked questions in the theory of varieties. He suspected that some of these questions may already have negative answers within the class of monoids of the form $S(W)$ where
semigroups are aperiodic and can be easily constructed. Upon his suggestion, M. Jackson and the author \cite{JS} confirmed  that with respect to the finite basis property the class of finite monoids of the form $S(W)$ behaves as badly as the class of all finite semigroups. In particular, the article \cite{JS} contains examples of finite FB aperiodic monoids whose direct product is NFB.

Presently, all existing examples of finite aperiodic FB semigroups whose direct product is NFB are contained in articles \cite{MJ, JS} and involve
monoids of the form $S(W)$ for certain sets of words $W$ with two non-linear variables. (See articles \cite{EL5, MV3} for non-aperiodic examples.)
 The new examples in this section are different from the ones in \cite{MJ, JS} because one of the aperiodic monoids is not of the form $S(W)$.

\begin{ex} \label{FBN} Let $S_4$ denote the monoid of all reflexive binary relations on a set with four elements. Then the direct product $S_4 \times S(\{at_1at_2a\})$ is non-finitely based while both $S_4$ and $S(\{at_1at_2a\})$ are finitely based.

\end{ex}

\begin{proof} As we have already mentioned, the monoid $S_4$ is finitely based by the result of Blanchet-Sadri \cite{BS} and the result of Volkov \cite{MV1}. Since the word $at_1at_2a$ contains only one non-linear variable, the monoid $S(\{at_1at_2a\})$ is finitely based by Theorem 3.2 in \cite{OS}.

As we have already mentioned at the end of Section 6, for $m=3$, the monoid $S_4$ satisfies all conditions of Theorem \ref{BSnew} except for Condition (iii). Now for $m=3$, the monoid $S(\{at_1at_2a\})$ satisfies Condition (i) (because all variables in ${\bf U}_n$ and ${\bf V}_n$ are non-linear) and Condition (iii) of Theorem \ref{BSnew}.
Therefore, the direct product $S_4 \times S(\{at_1at_2a\})$ satisfies all four conditions of Theorem \ref{BSnew} and consequently, is non-finitely based. \end{proof}

We say that a variable $x$ is stable in $\bf u$ with respect to a semigroup $S$ if $x$ is stable in any identity of $S$ of the form ${\bf u} \approx {\bf v}$.

\begin{theorem} \label{BSnew1} Let $m>0$ be any fixed integer. Let $S$ be a monoid such that

(i) $S$ satisfies the identity
\[{\bf U}_n = x^m y_1^2 y_2^2 \dots y_n^2 x \approx x^{m+1} y_1^2 y_2^2 \dots y_n^2 x = {\bf V}_n\] for each $n>1$;

(ii) for each $0 < d < m+1$ the variable $x$ is stable in  $x^{m+1-d}tx^d$ with respect to $S$;

(iii) For each $p,c >1$ the equivalence class $[\![x^py^c]\!]_S$ contains only words of the form $x^iy^j$ for some $i,j >1$;

Then $S$ is non-finitely based.

\end{theorem}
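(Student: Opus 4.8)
The plan is to apply Lemma \ref{nfblemma1} with the good collection of maps $\{l_{{\bf u},{\bf v}}\}$, taking no occurrence sets ($k=0$) and $\mathfrak X=\{x\}$, so that the only unstable datum I track is $occ_{\bf U}(x)$. Since $occ_{{\bf U}_n}(x)=m+1$ while $occ_{{\bf V}_n}(x)=m+2$, the variable $x$ is unstable in ${\bf U}_n\approx{\bf V}_n$, giving Condition (II). First I would record two consequences of the hypotheses. Applying (iii) to each pair $y_i,y_j$ (whose projection in ${\bf U}_n$ is $y_i^2y_j^2$) forces every ${\bf U}\in[\![{\bf U}_n]\!]_S$ to satisfy $D_x({\bf U})=y_1^{a_1}\cdots y_n^{a_n}$ with each $a_i\ge 2$, a \emph{sorted} product of $y$-blocks. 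Next, condition (ii) implies that $S$ satisfies no identity $x^a\approx x^b$ with $1\le a\le m$ and $a\ne b$: rewriting the front run of $x^{a}tx^{m+1-a}$ would otherwise change $occ_x$, contradicting stability of $x$ in $x^{m+1-d}tx^{d}$ for $d=m+1-a\in\{1,\dots,m\}$. Consequently, projecting any identity of $S$ onto a single variable shows that \emph{every variable occurring at most $m$ times in a word is automatically stable} with respect to $S$.

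Next I would establish the structural analogue of Claim \ref{mclass}: if ${\bf U}\in[\![{\bf U}_n]\!]_S$ and $occ_{\bf U}(x)=m+1$, then ${\bf U}=x^m y_1^{a_1}\cdots y_n^{a_n}x$ for some $a_i\ge 2$; that is, the first $m$ occurrences of $x$ precede all the $y$'s and the last occurrence of $x$ follows all of them. The sorted $y$-shape is already in hand; the content of the claim is the placement of the $x$'s. Here I would rule out ``front-loading" using (iii): a projection ${\bf U}(x,y_i)$ of the form $x^{m+1}y_i^{a_i}$ would place the non-$(x^\ast y^\ast)$ word $x^m y_i^2x$ in the class $[\![x^{m+1}y_i^{a_i}]\!]$, contradicting (iii); the remaining placement of the single trailing $x$ is controlled by the run-rigidity coming from (ii).

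With the claim available, the main argument follows the template of Theorem \ref{BSnew}. Fix a large $n$, let ${\bf U}=x^m y_1^{a_1}\cdots y_n^{a_n}x$ be as in the claim, and let ${\bf u}\approx{\bf v}$ be an identity of $S$ in fewer than $n/4$ variables with $\Theta({\bf u})={\bf U}$ for some $\Theta:\mathfrak A\to\mathfrak A^+$. Since ${\bf u}$ has fewer than $n/2$ variables while $_{1{\bf U}}y_1<_{\bf U}\cdots<_{\bf U}\,_{1{\bf U}}y_n$ is sorted, the pigeonhole argument yields a middle index $i$ and a variable whose $\Theta$-image contains both $_{1{\bf U}}y_i$ and $_{1{\bf U}}y_{i+1}$; the sorted-block shape forces this variable to be linear, say $t_1$. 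Writing $\mathfrak Z=\Theta^{-1}(x)\cap\con({\bf u})$, the identity $m+1=\sum_{z\in\mathfrak Z}occ_{\bf u}(z)\,occ_{\Theta(z)}(x)\ge\sum_{z\in\mathfrak Z}occ_{\bf u}(z)$ splits the analysis in two. If every $z\in\mathfrak Z$ satisfies $occ_{\bf u}(z)\le m$, then each such $z$ is stable by the run-rigidity consequence of (ii), so $x$ is stable in ${\bf U}\approx\Theta({\bf v})$. Otherwise $\mathfrak Z=\{z_0\}$ with $occ_{\bf u}(z_0)=m+1$ and $occ_{\Theta(z_0)}(x)=1$; the $m+1$ images of $z_0$ carry the $m+1$ occurrences of $x$ in order, so by the structural claim and the homomorphism property of $\Theta^{-1}_{\bf u}$ the linear letter $t_1$ lies strictly between the $m$-th and the $(m+1)$-st occurrence of $z_0$, whence ${\bf u}(z_0,t_1)=z_0^{m}t_1z_0$ (in general $z_0^{m+1-d}t_1z_0^{d}$ for some $0<d<m+1$). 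Condition (ii) then gives that $z_0$ is stable, so again $x$ is stable in ${\bf U}\approx\Theta({\bf v})$. This verifies Condition (III), and Lemma \ref{nfblemma1} yields that $S$ is non-finitely based.

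The step I expect to be the main obstacle is the structural claim, specifically pinning the positions of the $x$'s — that none is interspersed among the $y$-blocks and exactly one trails them. Condition (iii) is asymmetric: it forbids the $x$'s from collapsing to the front but says nothing directly about a $y$ preceding an $x$, so the trailing occurrence of $x$ must be controlled through a careful interplay of (iii) with the run-rigidity supplied by (ii). Getting this placement right is precisely what guarantees, in the second case above, that the linear separator $t_1$ genuinely splits the occurrences of $z_0$ so that condition (ii) becomes applicable.
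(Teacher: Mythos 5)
Your overall architecture is exactly the paper's: Lemma \ref{nfblemma1} with $\mathfrak X=\{x\}$ and no occurrence sets, instability of $x$ in ${\bf U}_n\approx{\bf V}_n$, the pigeonhole producing a linear $t$ whose image straddles a middle block boundary, and the two-case analysis via the counting identity $m+1=\sum_{z\in\Theta^{-1}(x)}occ_{\bf u}(z)\,occ_{\Theta(z)}(x)$, with (ii) supplying stability in both cases. Your explicitly derived run-rigidity consequence of (ii) (every variable occurring at most $m$ times is stable with respect to $S$) is correct and is in fact what the paper uses implicitly under the phrase ``$x^m$ is an isoterm for $S$''. The genuine gap is your structural claim. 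The strong form ${\bf U}=x^my_1^{a_1}\cdots y_n^{a_n}x$ is not derivable from (i)--(iii): nothing in the hypotheses constrains the \emph{middle} occurrences of $x$, so a word such as $x^{m-1}y_1^{a_1}x\,y_2^{a_2}\cdots y_n^{a_n}x$ passes every test available — its two-variable projections involving $x$ are never of the sorted form $x^py^c$, so (iii) is silent, and (ii) constrains occurrence counts across identities of $S$, not letter positions inside a fixed member of $[\![{\bf U}_n]\!]_S$. This kind of positional rigidity is precisely what hypothesis (iv) of Theorem \ref{BSnew} buys (Claim \ref{mclass}), and Theorem \ref{BSnew1} deliberately has no such hypothesis. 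For the same reason your fallback for the trailing $x$ — that its placement is ``controlled by the run-rigidity coming from (ii)'' — cannot work: stability of occurrence numbers says nothing about where the last $x$ sits inside ${\bf U}$.

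The repair is easier than you anticipated, and it is what the paper does: one needs only the weak chain consisting of sortedness of the $y$-blocks together with ${_{1{\bf U}}x}<_{\bf U}{_{last{\bf U}}y_1}$ and ${_{1{\bf U}}y_n}<_{\bf U}{_{(m+1){\bf U}}x}$. Your worry that (iii) is asymmetric is unfounded: since $\sim_S$ is a fully invariant congruence it is closed under renaming of variables, so (iii) equally asserts that $[\![y^px^c]\!]_S$ contains only words of the form $y^ix^j$. Hence if all occurrences of $y_1$ preceded the first $x$, then ${\bf U}(x,y_1)=y_1^{a_1}x^{m+1}$ would have the unsorted word ${\bf U}_n(x,y_1)=x^my_1^2x$ in its class, contradicting (iii); symmetrically ${\bf U}(x,y_n)=x^{m+1}y_n^{a_n}$ is impossible, which pins ${_{(m+1){\bf U}}x}$ after ${_{1{\bf U}}y_n}$. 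With this weak chain, your own parenthetical version of Case 2 — ${\bf u}(z_0,t_1)=z_0^{m+1-d}t_1z_0^{d}$ for \emph{some} $0<d<m+1$, rather than $d=1$ specifically — goes through verbatim, since the linear $t_1$ lies strictly between $\Theta^{-1}_{\bf u}({_{1{\bf U}}x})$ and $\Theta^{-1}_{\bf u}({_{(m+1){\bf U}}x})$. So the surrounding argument is the paper's argument and is sound, but as written your proof rests on an unprovable intermediate claim whose announced justification fails; weakening the claim to the endpoint chain and applying (iii) in mirrored form recovers the paper's proof.
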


\begin{proof} Take $n>10$ and $\mathcal X = \{ x\}$.
Evidently, the variable $x$ is unstable in ${\bf U}_n \approx {\bf V}_n$.

Let us check the third condition of Lemma \ref{nfblemma1}. Let ${\bf U} \in [\![{\bf U}_n]\!]_S$ be a word such that $occ_{\bf U}(x) = m+1$. Since the word $x$ is an isoterm for $S$, we have $\con ({\bf U}) = \non ({\bf U}) = \con ({\bf U}_n)$.
Condition (iii) implies that

$$({_{1{\bf U}}x}) <_{\bf U} ({_{last{\bf U}}y}_1) <_{\bf U} ({_{1{\bf U}}y}_2) <_{\bf U} ({_{last{\bf U}}y}_2)  <_{\bf U} \dots <_{\bf U} ({_{last{\bf U}}y}_{n-1}) <_{\bf U} ({_{1{\bf U}}y}_n) <_{\bf U} ({_{{(m+1)}{\bf U}}x}).$$

Let ${\bf u}$ be a word in less than $n/4$ variables such that $\Theta({\bf u})={\bf U}$ for some substitution $\Theta: \mathfrak A \rightarrow \mathfrak A^+$. Since the word ${\bf u}$ has less than $n/2$ variables, for some $1 < i < n$ both ${_{last{\bf U}}y}_{i}$ and ${_{1{\bf U}}y}_{i+1}$ are contained in $\Theta_{\bf u}(c)$ where $c$ is the only occurrence of some linear variable $t$ in ${\bf u}$.

Since $\Theta^{-1}_{\bf u}$ is a homomorphism from $(\ocs({\bf U}), <_{\bf U})$ to $(\ocs({\bf u}), <_{\bf u})$,
we have that $\Theta^{-1}_{\bf u}({_{{1}{\bf U}}x}) \le_{\bf u} t \le_{\bf u} \Theta^{-1}_{\bf u}({_{{2}{\bf U}}x})$.

Now let $\bf v$ be an arbitrary word for which $S$ satisfies the identity ${\bf u} \approx {\bf v}$ and ${\bf V} = \Theta({\bf v})$.
We consider two cases.

{\bf Case 1}: $\Theta^{-1}(x)$ contains some variable that occurs in $\bf u$ less than $m+1$ times. Since $occ_{\bf U}(x)=m+1$, each variable in $\Theta^{-1}(x)$ occurs in $\bf u$ less than $m$ times. Since the word $x^{m}$ is an isoterm for $S$, each variable in  $\Theta^{-1}(x)$
 is stable in ${\bf u} \approx {\bf v}$. Consequently, the variable $x$ is stable in ${\bf U} \approx {\bf V}$.

{\bf Case 2}: $\Theta^{-1}(x)$ contains only one variable $x$ and $occ_{\bf u}(x)=m+1$. Then
${\bf u}(x,t) = x^{m+1-d}tx^d$ for some $0 < d < m+1$. By Condition (ii), the variable $x$ is stable in  ${\bf u} \approx {\bf v}$. Consequently, the variable $x$ is stable in ${\bf U} \approx {\bf V}$. Therefore, the monoid $S$ is non-finitely based  by Lemma \ref{nfblemma1}.
\end{proof}

 Let $A_0^1$ denote the monoid obtained by adjoining an identity element to the semigroup $A_0= \langle a,b \mid aa=a, bb=b, ab=0 \rangle$ of order four.
 Comparing the finite basis for $A_0^1$ in \cite[Proposition 3.2(a)]{Ed} with the finite basis for $J_2$ in \cite[Theorem 3.5 ]{BS1} it is easy to conclude
 that $J_2$ is the equational theory of $A_0^1$. (See more details in Proposition 4.2 in \cite{OS3}).
 Since $A_0^1$ is not equationally equivalent to any  monoid with less than five elements \cite{Ed1}, the monoid $A_0^1$ is the smallest monoid whose equational theory is $J_2$. The next statement is reversed in \cite[Theorem 4.8]{OS3}.

\begin{cor} \label{alg} Let $W$ be a set of words with the property
that every adjacent pair of occurrences (if any) of two non-linear variables $x \ne y$ in each word in $W$ is of the form $\{{_{1{\bf u}}x}, {_{last{\bf u}}y} \}$.

Let $m>0$ be the maximal integer for which there is $a \in \mathfrak A$ such that $a^m$ is a subword of a word in $W$.
If $m$ is finite and for each $0 < d < m+1$ the word $b^{m+1-d}{\bf T} b^d$ is a subword of a word in $W$ for some $b \in \mathfrak A$ and ${\bf T} \in \mathfrak A^+$, then the direct product $A^1_0 \times S(W)$ is non-finitely based.
\end{cor}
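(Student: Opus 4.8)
The plan is to apply Theorem \ref{BSnew1} to the monoid $S = A^1_0 \times S(W)$ with the given integer $m$. Throughout I would use two standard facts about direct products: an identity holds in $S$ precisely when it holds in both $A^1_0$ and $S(W)$, so that $[\![\mathbf w]\!]_S = [\![\mathbf w]\!]_{A^1_0}\cap[\![\mathbf w]\!]_{S(W)}$, and a variable is stable in a word with respect to $S$ whenever it is stable with respect to one of the factors (for Condition (ii), however, I shall use the two factors together). I would also use, as stated before Corollary \ref{alg}, that the equational theory of $A^1_0$ is $J_2$, i.e. $A^1_0 \models \mathbf u \approx \mathbf v$ if and only if $\mathbf u$ and $\mathbf v$ have the same scattered subwords of length $\le 2$. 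It then remains to verify Conditions (i)--(iii) of Theorem \ref{BSnew1}. Condition (iii) is immediate from $A^1_0$ alone: for $p,c>1$ the word $x^py^c$ has scattered subwords of length $\le 2$ equal to $\{x,y,xx,yy,xy\}$ (note that $yx$ is absent), so any $\mathbf w\in[\![x^py^c]\!]_{A^1_0}$ has content $\{x,y\}$, contains $xx$ and $yy$, and omits $yx$; the last condition forces every occurrence of $x$ to precede every occurrence of $y$, whence $\mathbf w=x^iy^j$ with $i,j>1$. Since $[\![x^py^c]\!]_S\subseteq[\![x^py^c]\!]_{A^1_0}$, Condition (iii) follows.

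For Condition (ii), fix $0<d<m+1$ and suppose $S\models x^{m+1-d}tx^d\approx\mathbf v$. Applying $J_2$ to $A^1_0$, the scattered subwords of $x^{m+1-d}tx^d$ of length $\le2$ are $\{x,t,xx,xt,tx\}$, so $\mathbf v$ has content $\{x,t\}$, contains $xt$ and $tx$, and omits $tt$; thus $t$ is linear in $\mathbf v$ and $\mathbf v=x^ptx^q$ with $p,q\ge1$. Now I would bring in $S(W)$ together with the hypothesis: choose $b\in\mathfrak A$ and $\mathbf T\in\mathfrak A^+$ with $b^{m+1-d}\mathbf Tb^d$ a subword of a word in $W$, and let $\Theta$ be the substitution $x\mapsto b$, $t\mapsto\mathbf T$. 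Then $\Theta(x^{m+1-d}tx^d)=b^{m+1-d}\mathbf Tb^d$ is a nonzero element of $S(W)$, so the identity forces $b^{m+1-d}\mathbf Tb^d=\Theta(\mathbf v)=b^p\mathbf Tb^q$ as words. Equal words have equal letter multiplicities, so comparing the number of occurrences of $b$ gives $(m+1-d)+occ_{\mathbf T}(b)+d=p+occ_{\mathbf T}(b)+q$, whence $occ_{\mathbf v}(x)=p+q=m+1$. Hence $x$ is stable, establishing Condition (ii).

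Condition (i) is the main obstacle. For $A^1_0$ it is routine: since $m\ge1$, both $\mathbf U_n$ and $\mathbf V_n$ contain $xx$ and share all other scattered subwords of length $\le2$, so $A^1_0\models\mathbf U_n\approx\mathbf V_n$. The real work is in $S(W)$. Writing $\mathbf V_n=x\,\mathbf U_n$, for any substitution $\Theta$ the identity holds under $\Theta$ iff $\Theta(\mathbf U_n)=\Theta(x)\,\Theta(\mathbf U_n)$ in $S(W)$; this is automatic when $\Theta(x)\in\{0,1\}$, and when $\Theta(x)=\xi$ is a nonempty subword it reduces, by comparing lengths of nonzero elements, to showing that $\Theta(\mathbf U_n)=\xi^m\eta_1^2\cdots\eta_n^2\xi$ (with $\eta_i=\Theta(y_i)$) is \emph{not} a subword of any word in $W$. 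This is where I would invoke the structural hypothesis on $W$ and the maximality of $m$. The plan is to argue that the two copies of $\xi$-material bookend $\Theta(\mathbf U_n)$ (the block $\xi^m$ on the left, a single $\xi$ on the right) with the doubled blocks $\eta_i^2$ strictly between them, so that the first and last letters of $\xi$ occur both near the left and near the right of any putative embedding into $w\in W$ and are therefore nonlinear in $w$. The occurrence of $\xi$'s last letter at the right end of the block $\xi^m$ is then an interior occurrence of a nonlinear variable adjacent to a \emph{different} nonlinear variable coming from $\eta_1^2\cdots\eta_n^2\xi$; being neither the first nor the last occurrence of its variable (a copy sits in the final $\xi$, and for $m\ge2$ earlier copies sit in the block), it contradicts the requirement that adjacent occurrences of distinct nonlinear variables form a pair $\{{_{1{\bf u}}x},{_{last{\bf u}}y}\}$.

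The only way to avoid generating such a forbidden adjacency is for the boundary letters of $\xi$ (and of the $\eta_i$) to coincide, but then $\Theta(\mathbf U_n)$ contains a power $a^{m+1}$ of a single letter, which exceeds the maximal power $m$ and so is again not a subword of any word in $W$. In every case $\Theta(\mathbf U_n)=0$, giving $S(W)\models\mathbf U_n\approx\mathbf V_n$ and hence Condition (i). With (i)--(iii) verified, Theorem \ref{BSnew1} yields that $S=A^1_0\times S(W)$ is non-finitely based. The delicate step, which I expect to demand the most care, is precisely this combinatorial case analysis confirming that $\Theta(\mathbf U_n)$ can never embed as a factor into a word of $W$; the remaining verifications are, by contrast, short and follow the pattern already used for the other corollaries in the paper.
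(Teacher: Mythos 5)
Your top-level strategy is exactly the paper's: feed $A^1_0\times S(W)$ into Theorem \ref{BSnew1}, with $A^1_0$ (equational theory $J_2$) supplying Conditions (i) and (iii) and $S(W)$ supplying Conditions (i) and (ii). The paper disposes of all of this in three sentences, leaving the verifications to the reader; your verifications of (ii) (pinning ${\bf v}=x^ptx^q$ via $J_2$, then counting occurrences of $b$ after substituting $x\mapsto b$, $t\mapsto{\bf T}$ into the nonzero element $b^{m+1-d}{\bf T}b^d$) and of (iii) are correct, as is the reduction of (i) for $S(W)$ to showing that $\xi^m\eta_1^2\cdots\eta_n^2\xi$ (with $\xi=\Theta(x)\ne 1$, $\eta_i=\Theta(y_i)$) is never a factor of a word of $W$.

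But precisely in that last step — the one the paper does not spell out — your case analysis has a genuine gap, and on both horns of your dichotomy. The fallback claim ``the only way to avoid the forbidden adjacency is for the boundary letters to coincide, but then $\Theta({\bf U}_n)$ contains $a^{m+1}$'' is simply false: take $m\ge 3$, $\xi=ab$, $\eta_1=b$ and the other $\eta_i$ empty, so that $\Theta({\bf U}_n)=(ab)^m\,b^2\,ab$; the junction at the right end of $\xi^m$ reads $\ldots b\cdot b\ldots$ (same letter, so your adjacency argument is silent), yet no letter-power in this word exceeds $3\le m$. On the first horn, when $m=1$ the occurrence of the last letter $c$ of $\xi$ at the end of the block $\xi^m=\xi$ may well be the \emph{first} occurrence of $c$ in $w$ (e.g. $\xi=ac$, all $\eta_i$ empty: at the single junction the $c$ can be a first occurrence and the $a$ a last occurrence, a configuration the hypothesis on $W$ permits), so ``neither first nor last'' fails. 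The claim you need is true, but it rests on a mechanism your sketch never isolates: under the hypothesis on $W$, an adjacency of two \emph{distinct} non-linear letters can occur at most once in $w$, because a second instance would place an occurrence of one of the two letters strictly after its last occurrence or strictly before its first. Hence if $\xi$ — which occurs $m+1\ge 2$ times in $\Theta({\bf U}_n)$ — or some nonempty $\eta_j$ (squared in $\eta_j^2$) contains two adjacent distinct letters, that adjacency repeats and you are done immediately; this disposes of both counterexamples above. In the remaining case $\xi$ and all nonempty $\eta_i$ are powers of single letters; then the initial $a$-run has length at least $m|\xi|$, so $a^{m+1}$-freeness forces $\xi=a$ and the first nonempty block $b^{2k}$ to have $b\ne a$ (if all $\eta_i$ are empty one gets $a^{m+1}$ outright), and at the junction $a^m\cdot b^{2k}$ neither occurrence is the last occurrence of its letter — the trailing $\xi=a$ lies further right, and $2k\ge 2$ puts another $b$ immediately after — contradicting the required form $\{{_{1}x},{_{last}y}\}$. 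With this repair your proof of Condition (i) closes and the corollary follows as you intend.
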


\begin{proof} The monoid $A^1_0$ satisfies all conditions of Theorem \ref{BSnew1} except for Condition (ii). The monoid $S(W)$ satisfies Conditions (i) and (ii) of Theorem \ref{BSnew1}. Therefore, the direct product $A^1_0 \times S(W)$ satisfies all three conditions of Theorem \ref{BSnew1} and consequently, is non-finitely based. \end{proof}

\begin{ex} \label{last} Denote $W_1 = \{ata\}$, $W_2 = \{a^2ta, ata^2\}$,
$W_3 = \{a^3ta, ata^3, a^2ta^2\}$, $W_4 = \{a^4ta, ata^4, a^3ta^2, a^2ta^3\}$, $\dots$.

Then for each $i=1,2, \dots$ the direct product $A_0^1 \times S(W_i)$ is non-finitely based while both $A_0^1$ and $S(W_i)$ are finitely based.
\end{ex}

\begin{proof} The monoid $A_0^1$ is finitely based by the result of C. Edmunds \cite{Ed}. Since for each $i=1,2, \dots$ the set $W_i$ contains only almost-linear words, the monoid $S(W_i)$ is finitely based by Theorem 3.2 in \cite{OS}. The direct product $A^1_0 \times S(W_i)$ is non-finitely based by Corollary
 \ref{alg}.
\end{proof}

Notice that the non-finitely based monoid $A_0^1 \times S(\{ata\})$ is the smallest existing example of an NFB monoid which is the direct product of two FB monoids.
In particular, it is smaller than the non-finitely based monoid $G_3 \times S(\{ata\})$ \cite{EL5} where $G_3$ is the symmetric group of order six.

We summarize that Lemma \ref{nfblemma1} can be used to verify the non-finite basis property in a wide range of finite aperiodic semigroups. However, there is probably more sophisticated syntactic mechanism behind the non-finite basis property of
the 55-element aperiodic semigroup in \cite{JM}.

\subsection*{Acknowledgement} The author is indebted to an anonymous referee for reading the paper
very carefully with lots of valuable comments. The author also thanks Gili Golan, Edmond Lee and Lev Shneerson for a number of useful
suggestions on improvement of this article and Marcel Jackson for directing her attention to the articles \cite{BS,MV1}.

\end{document}